\DeclareSymbolFont{AMSb}{U}{msb}{m}{n}
\definecolor{cite}{HTML}{0851A6}
\definecolor{url}{HTML}{0851A6}
\definecolor{link}{HTML}{8F0C00}
\DeclareMathAlphabet{\eur}{U}{zeus}{m}{n}
\newcommand{\matheur}[1]{\eur{#1}}
\theoremstyle{plain}
\newtheorem{prop}[subsubsection]{Proposition}
\newtheorem{lem}[subsubsection]{Lemma}
\newtheorem{cor}[subsubsection]{Corollary}
\newtheorem{thm}[subsubsection]{Theorem}
\newtheorem*{thm*}{Theorem}
\theoremstyle{definition}
\newtheorem{defn}[subsubsection]{Definition}
\newtheorem{notation}[subsubsection]{Notation}
\theoremstyle{remark}
\newtheorem{rmk}[subsubsection]{Remark}
\newcommand{\teq}{\addtocounter{subsubsection}{1}\tag{\thesubsubsection}}
\newcommand{\arrdisplacement}{0.36ex}
\newcommand{\arrdisplacementsp}{0.72ex}
\DeclareMathOperator{\addCoFil}{addCoFil}
\DeclareMathOperator{\addFil}{addFil}
\DeclareMathOperator{\assgr}{ass-gr}
\DeclareMathOperator{\BarO}{Bar}
\DeclareMathOperator{\Bun}{Bun}
\DeclareMathOperator{\car}{char}
\newcommand{\Cat}{\matheur{C}\mathrm{at}}
\DeclareMathOperator{\coChev}{coChev}
\DeclareMathOperator{\coFact}{coFact}
\DeclareMathOperator{\coFactstar}{\coFact^\star}
\DeclareMathOperator{\coFib}{coFib}
\DeclareMathOperator{\coFil}{coFil}
\DeclareMathOperator{\coFree}{coFree}
\DeclareMathOperator{\coLie}{coLie}
\DeclareMathOperator{\coLieshriek}{\coLie^!}
\DeclareMathOperator{\coLiestar}{\coLie^\star}
\DeclareMathOperator*{\colim}{colim}
\DeclareMathOperator{\coOp}{coOp}
\DeclareMathOperator{\coPrim}{coPrim}
\DeclareMathOperator{\cotriv}{cotriv}
\DeclareMathOperator{\Chev}{Chev}
\DeclareMathOperator{\coBarP}{coBar}
\DeclareMathOperator{\ComAlg}{ComAlg}
\DeclareMathOperator{\ComAlgshriek}{\ComAlg^!}
\DeclareMathOperator{\ComAlgstar}{\ComAlg^\star}
\DeclareMathOperator{\ComCoAlg}{ComCoAlg}
\newcommand{\ComCoAlgstar}{\ComCoAlg^\star}
\newcommand{\cont}{\mathrm{cont}}
\newcommand{\decay}{\mathrm{decay}}
\newcommand{\DGCat}{\mathrm{DGCat}}
\newcommand{\DGCatpres}{\DGCat_\pres}
\newcommand{\DGCatprescont}{\DGCat_{\pres, \cont}}
\DeclareMathOperator{\diag}{diag}
\newcommand{\disj}{\mathrm{disj}}
\newcommand{\enh}{\mathrm{enh}}
\newcommand{\etale}{\'etale}
\DeclareMathOperator{\Fact}{Fact}
\DeclareMathOperator{\Factstar}{\Fact^\star}
\DeclareMathOperator{\Fib}{Fib}
\newcommand{\Fil}{\mathrm{Fil}}
\newcommand{\Fpbar}{\lbar{\mathbb{F}}_p}
\newcommand{\Fq}{\mathbb{F}_q}
\newcommand{\Fqbar}{\lbar{\mathbb{F}}_q}
\DeclareMathOperator{\Free}{Free}
\newcommand{\fSet}{\mathrm{fSet}}
\DeclareMathOperator{\Fun}{Fun}
\DeclareMathOperator{\Gr}{Gr}
\newcommand{\gr}{\mathrm{gr}}
\DeclareMathOperator{\Hom}{Hom}
\newcommand{\id}{\mathrm{id}}
\DeclareMathOperator{\indnilp}{ind-nilp}
\DeclareMathOperator{\ins}{ins}
\DeclareMathOperator{\Lie}{Lie}
\DeclareMathOperator{\Lieast}{\Lie^\ast}
\DeclareMathOperator{\Liestar}{\Lie^\star}
\newcommand{\oblv}{\mathrm{oblv}}
\newcommand{\op}{\mathrm{op}}
\DeclareMathOperator{\Op}{Op}
\newcommand{\otimesshriek}{\overset{!}{\otimes}}
\newcommand{\otimesstar}{\otimes^\star}
\DeclareMathOperator{\Palg}{-alg}
\DeclareMathOperator{\Pcoalg}{-coalg}
\newcommand{\Poincare}{Poincar\'e}
\newcommand{\pres}{\mathrm{pres}}
\newcommand{\PreStk}{\mathrm{PreStk}}
\newcommand{\Prim}{\mathrm{Prim}}
\newcommand{\pt}{\mathrm{pt}}
\newcommand{\Ql}{\mathbb{Q}_\ell}
\newcommand{\Qlbar}{\lbar{\mathbb{Q}}_\ell}
\DeclareMathOperator{\Ran}{Ran}
\newcommand{\red}{\mathrm{red}}
\DeclareMathOperator{\Sym}{Sym}
\DeclareMathOperator{\SymMon}{SymMon}
\newcommand{\Sch}{\mathrm{Sch}}
\newcommand{\set}{\mathrm{set}}
\DeclareMathOperator{\Spec}{Spec}
\newcommand{\surjects}{\twoheadrightarrow}
\DeclareMathOperator{\Shv}{Shv}
\newcommand{\Spc}{\mathrm{Spc}}
\newcommand{\stab}{\mathrm{stab}}
\DeclareMathOperator{\Supp}{Supp}
\newcommand{\surj}{\mathrm{surj}}
\DeclareMathOperator{\TakeOut}{TakeOut}
\DeclareMathOperator{\Tot}{Tot}
\DeclareMathOperator{\tr}{tr}
\DeclareMathOperator{\triv}{triv}
\newcommand{\union}{\mathrm{union}}
\newcommand{\Vect}{\mathrm{Vect}}
\newcommand{\lbar}[1]{\overline{#1}}
\newcommand{\ardis}{\ar@<\arrdisplacement>}
\newcommand{\ardissp}{\ar@<\arrdisplacementsp>}
\newcommand{\oversetsupscript}[3]{\overset{#2}{#1}{}^{#3}}
\newcommand{\wtilde}[1]{\widetilde{#1}}
\title[The Atiyah-Bott formula and connectivity in chiral Koszul duality]{The Atiyah-Bott formula and connectivity in chiral Koszul duality}
\author{Quoc P. Ho}
\address{Institute of Science and Technology Austria, Klosterneuburg, Austria}
\email{qho@ist.ac.at}
\date{\today}
\keywords{Chiral algebras, chiral homology, factorization algebras, Koszul duality, Ran space.}
\subjclass[2010]{Primary 81R99. Secondary 18G55.}
\begin{document}
\begin{abstract}
	The $\otimesstar$-monoidal structure on the category of sheaves on the $\Ran$ space is not pro-nilpotent in the sense of~\cite{francis_chiral_2011}. However, under some connectivity assumptions, we prove that Koszul duality induces an equivalence of categories and that this equivalence behaves nicely with respect to Verdier duality on the $\Ran$ space and integrating along the $\Ran$ space, i.e. taking factorization homology. Based on ideas sketched in~\cite{gaitsgory_contractibility_2012}, we show that these results also offer a simpler alternative to one of the two main steps in the proof of the Atiyah-Bott formula given in~\cite{gaitsgory_weils_2014} and~\cite{gaitsgory_atiyah-bott_2015}.
\end{abstract}

\maketitle

\tableofcontents

\section{Introduction}

\subsection{History}
Let $X$ be a smooth and complete algebraic curve, and $G$ a simply-connected semi-simple algebraic group over an algebraically closed field $k$.\footnote{This corresponds to the case of constant group $G\times X$ over $X$. For simplicity's sake, we will restrict ourselves to this case in the introduction.} Then we know that
\[
	C^*(BG, \Lambda) \simeq \Sym_\Lambda V
\]
for some finite dimensional vector space $V$ over $\Lambda$, where $\Lambda$ is $\Ql$ when $k=\Fpbar$ ($\ell \neq p$), and $\Lambda$ is any field of characteristic $0$ when $k$ has characteristic 0.

Let $\Bun_G$ denote the moduli stack of principal $G$-bundles over $X$. In the differential geometric setting, i.e. when $k=\mathbb{C}$, the cohomology ring of $\Bun_G$ was computed by Atiyah and Bott in~\cite{atiyah_yang-mills_1983} using Morse-theoretic methods.

\begin{thm}[Atiyah-Bott] \label{thm:intro:Atiyah-Bott}
We have the following equivalence
\[
	C^*(\Bun_G, \Lambda) = \Sym_\Lambda(C^*(X, V \otimes \omega_X)),
\]
where $\omega_X$ is the dualizing sheaf of $X$.
\end{thm}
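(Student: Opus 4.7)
The plan is to translate the computation of $C^*(\Bun_G, \Lambda)$ into a chiral homology computation on the curve $X$ and then evaluate the latter using chiral Koszul duality. The first step is classical and relies on Gaitsgory's contractibility theorem for the Ran Grassmannian: the natural map $\Gr_{G,\Ran} \to \Bun_G$ is a cohomological equivalence, so $C^*(\Bun_G, \Lambda)$ agrees with the cohomology of the factorization space $\Gr_{G,\Ran}$ over $\Ran X$. I take this geometric input for granted; it is the one main step of the Gaitsgory--Lurie argument that this paper does \emph{not} aim to simplify.

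Because $\Gr_{G,\Ran}$ carries a factorization structure, its cohomology is computed by the chiral homology of a commutative factorization algebra $\mathcal{A}$ on $X$ whose local fibers are $\mathcal{A}_x \simeq C^*(\Gr_{G,x}, \Lambda)$. Applying the Koszul-type equivalence $C^*(\Omega G) \simeq \Sym_\Lambda(V[-1])$ dual to the hypothesis $C^*(BG, \Lambda) \simeq \Sym V$, each fiber $\mathcal{A}_x$ is a free commutative algebra on a shift of $V$. I then pass across chiral Koszul duality to identify $\mathcal{A}$ with the image, under the free commutative factorization-algebra functor, of a factorization $\coLie$-coalgebra $\mathcal{L}$ supported on the main diagonal and isomorphic there to $V[-1] \otimes \omega_X$. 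Granted this identification, the compatibility of chiral homology with the free-commutative construction produces
\[
C^*(\Bun_G, \Lambda) \simeq \Sym_\Lambda\bigl(C^*(X, V \otimes \omega_X)\bigr),
\]
which is the desired formula.

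The hard part is precisely the chiral Koszul duality step. In the Francis--Gaitsgory framework, both the equivalence between commutative and $\coLie$ factorization (co)algebras and the compatibility of chiral homology with the free-commutative construction require the ambient category to be pro-nilpotent, a hypothesis that is not available in the $\ell$-adic or geometric context relevant here. The connectivity results developed in the body of the paper are designed to circumvent exactly this assumption: they show that the Koszul duality functors continue to behave as expected on suitably connective factorization coalgebras, so that the formal manipulation above can be carried out rigorously. Once those connectivity statements are in place, the computation threads through without friction and yields Theorem~\ref{thm:intro:Atiyah-Bott}.
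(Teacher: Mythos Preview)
Your outline has the right large-scale structure but misplaces the key duality step, and in doing so leans on a Koszul duality statement the paper does not establish.

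Non-abelian \Poincare{} duality gives $C_*^{\red}(\Bun_G) \simeq C^*_c(\Ran X', \matheur{A}')$, where $\matheur{A}'$ is the cocommutative factorization \emph{coalgebra} built from $g_!\omega_{\Gr}$, with $!$-fibers $C_*^{\red}(\Gr_{G,x})$. You instead posit a commutative factorization \emph{algebra} $\mathcal{A}$ with fibers $C^*(\Gr_{G,x})$ whose chiral homology is $C^*(\Bun_G)$. That object is not what the geometry hands you, and producing it together with the claimed identification of its chiral homology is exactly the Verdier-duality step the paper is devoted to simplifying---it cannot be taken as input.

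Even granting such an $\mathcal{A}$, your step ``pass across chiral Koszul duality to identify $\mathcal{A}$ as the image of a $\coLie$-coalgebra under the free commutative functor'' would require the $\ComAlg$/$\coLie$ adjunction $(\coPrim[1],\coChev)$ to be an equivalence on a suitable coconnective subcategory. The paper does not prove this; Theorem~\ref{thm:Koszul_duality_connectivity_on_Ran} is an equivalence only for the $\Lie$/$\ComCoAlg$ pair. The paper's actual route exploits precisely this asymmetry: one first writes the coalgebra $\matheur{A}' \simeq \Chev(\mathfrak{a}')$ using the equivalence that \emph{is} available, then Verdier-dualizes to obtain $D_{\Ran X'}\matheur{A}' \simeq \coChev(D_{X'}\mathfrak{a}')$ via Theorem~\ref{thm:Chev_coChev_and_D_Ran}, checks factorizability via Theorem~\ref{thm:factorizability_coChev} so that a diagonal comparison identifies this with the explicitly free $\matheur{B} \simeq \Sym(j_* M)$, and finally invokes Theorem~\ref{thm:coChev_and_C^*_c(Ran)} to push $\coChev$ through $C^*_c$. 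The Verdier-duality passage is not a detour you can bypass; it is the bridge from the coalgebra side, where Koszul duality is proved to be an equivalence, to the algebra side, where the answer is visibly free.
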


In the recent work~\cite{gaitsgory_weils_2014}, Gaitsgory and Lurie gave a purely algebro-geometric proof of the theorem above in the framework of \etale{} cohomology (see also~\cite{gaitsgory_atiyah-bott_2015} for an alternative perspective). In the case where $X$ and $G$ come from objects over $k=\Fq$, the isomorphism in Theorem~\ref{thm:intro:Atiyah-Bott} was proved to be compatible with the Frobenius actions on both sides. The Grothendieck-Lefschetz trace formula for $\Bun_G$ then gives an expression for the number of $k$-points on $\Bun_G$ and hence, confirms the conjecture of Weil that the Tamagawa number of $G$ is 1.

Following ideas suggested in~\cite{gaitsgory_contractibility_2012}, this paper aims to provide an alternative (and simpler) proof of one of the two main steps in the original proofs, as given in~\cite{gaitsgory_weils_2014} and~\cite{gaitsgory_atiyah-bott_2015}. This is possible due to a family of new results regarding connectivity in the theory of chiral Koszul duality proved in this paper which are of independent interest.

\subsection{Prerequisites and guides to the literature}
For the reader's convenience, we include a quick review of the necessary background as well as pointers to the existing literature in \S\ref{sec:prelims}. The readers who are unfamiliar with the language used in the introduction are encouraged to take a quick look at \S\ref{sec:prelims} before returning to the current section.

\subsection{A sketch of Gaitsgory and Lurie's method} We will now provide a sketch of the strategy employed by~\cite{gaitsgory_weils_2014} and~\cite{gaitsgory_atiyah-bott_2015}. In both cases, the proofs utilize the theory of factorization algebras. Broadly speaking, there are two main steps: non-abelian \Poincare{} duality and Verdier duality on the $\Ran$ space.

The readers who are only interested in Koszul duality in the setting of factorization algebras in its own rights can safely skip to~\S\ref{subsec:intro_maingoal}.

\subsubsection{Non-abelian \Poincare{} duality} The first step involves a factorizable sheaf $\matheur{A}$ on $\Ran X$ from $f_! \omega_{\Gr_{\Ran X}}$ where $f$ is the natural map
\[
	f: \Gr_{\Ran X} \to \Ran X,
\]
and $\Gr_{\Ran X}$ is the Beilinson-Drinfeld factorizable affine Grassmannian. The crucial observation is that the natural map
\[
	\Gr_{\Ran X} \to \Bun_G
\]
has homologically contractible fibers, and hence, we get an equivalence
\[
	C^*_c(\Bun_G, \omega_{\Bun_G}) \simeq C^*_c(\Ran X, \matheur{A}). \teq\label{eq:intro_nonab_poincare}
\]

\subsubsection{Verdier duality} The right hand side of~\eqref{eq:intro_nonab_poincare} is, however, not directly computable. If one thinks of factorizable sheaves on $\Ran X$ as $E_2$-algebras, then one reason that makes it hard to compute the factorization homology of $\matheur{A}$ is the fact that it's not necessarily commutative (i.e. not $E_\infty$). $\matheur{A}$, however, also has a commutative co-algebra structure, via the diagonal map\footnote{We are eliding a minor, but technical, point about unital vs. non-unital here.}
\[
	\Gr \to \Gr \times \Gr.
\]
Thus, its Verdier dual $D_{\Ran X}\matheur{A}$ naturally has the structure of a commutative algebra. In fact, it is proved that $D_{\Ran X}\matheur{A}$ is a commutative factorization algebra.

\subsubsection{Computing the Verdier dual} One can prove something even better: $D_{\Ran X}\matheur{A}$ is isomorphic to the commutative factorization algebra $\matheur{B}$ coming from $C^*(BG)$. Namely, the \emph{co-stalk} of $\matheur{B}$ at any closed point $\iota_x: x \hookrightarrow X$ is
\[
	\iota_x^! \matheur{B} \simeq C^*(BG)
\]
and in fact
\[
	\matheur{B}|_X \simeq C^*(BG)\otimes \omega_X.
\]

A natural map from one to the other is given by a certain pairing between $\matheur{A}$ and $\matheur{B}$. Since these are factorizable, showing that this map is an equivalence amounts to showing that its restriction to $X$ is also an equivalence. This is now a purely local problem, and hence, for example, one can reduce it to the case of $\mathbb{P}^1$ to prove it.

\begin{rmk}
Note that in the above, co-stalk, rather than stalk, appears. This is because in~\cite{gaitsgory_weils_2014,gaitsgory_atiyah-bott_2015}, sheaves on (pre-)stacks are set up using the $!$-functors rather than $*$-functors. 
\end{rmk}

\subsubsection{Conclusion} Note from the above that
\[
	\matheur{B}|_X \simeq C^*(BG)\otimes \omega_X \simeq \Sym V \otimes \omega_X
\]
is a free commutative algebra, where $V$ is some explicit chain complex that we can compute. But factorization homology with coefficients in a free commutative factorization algebra is easy to compute. Hence, we conclude
\begin{align*}
	C^*(\Bun_G, \Ql)
	&\simeq C^*_c(\Bun_G, \omega_{\Bun_G})^\vee \\
	&\simeq C^*_c(\Ran X, \matheur{A})^\vee \\
	&\simeq C^*_c(\Ran X, D_{\Ran X}\matheur{A}) \teq \label{eq:intro_verdier_homology} \\
	&\simeq C^*_c(\Ran X, \matheur{B}) \\
	&\simeq \Sym C^*_c(X, V \otimes \omega_X).
\end{align*}

\subsection{What does this paper do?}
\label{subsec:intro_maingoal}
In this paper, we prove that, under some connectivity assumptions, Koszul duality on the category of sheaves on the $\Ran$ space with the $\otimesstar$-monoidal structure induces an equivalence of categories and that this equivalence behaves nicely with respect to Verdier duality on the $\Ran$ space and integrating along the $\Ran$ space, i.e. taking factorization homology. This equivalence is different from those appearing in~\cite{francis_chiral_2011} since the $\otimesstar$-monoidal structure is not pro-nilpotent. On the other hands, our results are quite similar to those of Quillen~\cite{quillen_rational_1969} in the sense that by imposing certain connectivity conditions on the objects involved, we can turn Koszul duality into an equivalence.

Even though the results proved in the paper are of independent interest, our main motivation comes from the ideas sketched in~\cite{gaitsgory_contractibility_2012}. While both~\cite{gaitsgory_weils_2014} and~\cite{gaitsgory_atiyah-bott_2015} follow a similar strategy, the latter develops the theory of Verdier duality on prestacks and applies it to the case of the $\Ran$ space, resulting in a more streamlined and simpler proof of the second step. However, since the $\Ran$ space is a big object,\footnote{In the terminology of~\cite{gaitsgory_atiyah-bott_2015}, it's not finitary.} its technical properties in relation to factorization homology and factorizability are difficult to establish. More precisely, it takes a lot of work to prove the (innocent looking) equivalence~\eqref{eq:intro_verdier_homology} and to a somewhat lesser extent, the fact that $D_{\Ran X} \matheur{A}$ is factorizable. This results in a rather complicated technical heart of~\cite{gaitsgory_atiyah-bott_2015}. The results proved in this paper further simplify the second step of the proof. More precisely, these results could be used to replace all of \S8, \S9, and part of \S12 and \S15 of~\cite{gaitsgory_atiyah-bott_2015}.

Note also that many technical results about Verdier duality are proved only for the case of curves in~\cite{gaitsgory_atiyah-bott_2015}, while results stated here about Koszul duality are for arbitrary dimension. This is in part because~\cite{gaitsgory_atiyah-bott_2015} works with more general sheaves on the $\Ran$ space, whereas we mostly concern ourselves with sheaves of special shapes, i.e. those of the form $\Chev \mathfrak{g}$ or $\coChev\mathfrak{g}$.

\subsection{An outline of our results} \label{subsec:intro_outline_results} We will now state the main results proved in this paper.

\subsubsection{Koszul duality for $\Lie$ and $\ComCoAlg$} Let $X \in \Sch$ be a scheme (see \S\ref{subsubsec:ag_convention} for our convention), and $\ComCoAlgstar(\Ran X)$ and $\Lie^\star(\Ran X)$ denote the categories of co-commutative co-algebra objects and Lie algebra objects in $\Shv(\Ran X)$ with respect to the $\otimesstar$-monoidal structure. The theory of Koszul duality developed in~\cite{francis_chiral_2011} gives a pair of adjoint functors\footnote{\label{fn:indnilp_vs_ord} Strictly speaking, we are using the category $\ComCoAlg^{\indnilp}$ of ind-nilpotent commutative co-algebras. However, we will see easily that, subject to an appropriate connectivity assumption of sheaves on $\Ran X$, this category coincides with the category $\ComCoAlg$.}
\[
	\Chev: \Liestar(\Ran X) \rightleftarrows  \ComCoAlgstar(\Ran X): \Prim[-1] \teq\label{eq:intro_koszul_duality}
\]

Even though the pair of adjoint functors above are not mutually inverses of each other in general, they are when we impose certain connectivity constraints on both sides.

\begin{thm}[Theorem~\ref{thm:Koszul_duality_connectivity_on_Ran}] \label{thm:intro:Koszul_duality_connectivity_on_Ran}
	Suppose $X$ is smooth over $k$. Then we have the following commutative diagram
	\[
	\xymatrix{
		\Liestar(\Ran X)^{\leq c_L} \ar@{=}[rr]^<<<<<<<<<<{\Chev}_<<<<<<<<<<{\Prim[-1]} && \ComCoAlgstar(\Ran X)^{\leq c_{cA}} \\
		\Liestar(X)^{\leq c_L} \ar@{^(->}[u] \ar@{=}[rr]^{\Chev}_{\Prim[-1]} && \coFactstar(X)^{\leq c_{cA}} \ar@{^(->}[u]
	}
	\]
	where $\leq c_L$ and $\leq c_{cA}$ denote the connectivity constraints given in Definition~\ref{defn:connectivity_constraints_Lie_ComCoAlg_Ran}, and where $\Chev$ and $\Prim[-1]$ are the functors coming from Koszul duality.
\end{thm}

\subsubsection{Koszul duality for $\coLie$ and $\ComAlg$}  Let $\ComAlgstar(\Ran X)$ and $\coLiestar(\Ran X)$ denote the categories of commutative algebra objects and co-Lie co-algebra objects in $\Shv(\Ran X)$ with respect to the $\otimesstar$-monoidal structure. As above, we have the following pair of adjoint functors\footnote{See also footnote~\ref{fn:indnilp_vs_ord}.}
\[
	\coPrim[1]: \ComAlgstar(\Ran X) \rightleftarrows \coLiestar(\Ran X): \coChev.
\]

Unlike the case of $\Liestar$ and $\ComCoAlgstar$, for a co-Lie algebra $\mathfrak{g} \in \coLiestar(X)$,
\[
	\coChev(\mathfrak{g}) \in \ComAlgstar(\Ran X)
\]
doesn't necessarily live inside $\Factstar(X)$. However, we have the following
\begin{thm}[Theorem~\ref{thm:factorizability_coChev}] \label{thm:intro:factorizability_coChev}
	Restricted to the full subcategory $\coLiestar(X)^{\geq 1}$, where we are using the perverse $t$-structure on $X$, the functor $\coChev$ factors through $\Factstar$, i.e.
	\[
	\xymatrix{
		\coLiestar(X)^{\geq 1} \ar[dr]_{\coChev} \ar[rr]^{\coChev} && \ComAlgstar(\Ran X) \\
		& \Factstar(X) \ar@{^(->}[ur]
	}
	\]
\end{thm}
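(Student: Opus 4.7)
The approach is to give $\coChev(\mathfrak{g})$ an explicit underlying graded description and verify factorizability on the disjoint locus, using a weight filtration to reduce to the case of a free symmetric power.

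First I would identify $\coChev(\mathfrak{g})$, as a filtered object, with $\Sym^\star(\mathfrak{g}[-1])$ equipped with the co-Chevalley differential coming from the coLie cobracket on $\mathfrak{g}$; this is dual to the familiar identification $\Chev(\mathfrak{g}) \simeq \Sym(\mathfrak{g}[1])$ and is obtained by unpacking $\coChev$ as the right adjoint to $\coPrim[1]$ through a bar--cobar computation. The weight grading by symmetric power endows $\coChev(\mathfrak{g})$ with a filtration $F^{\leq n}$ whose $n$-th associated graded is $\Sym^{n,\star}(\mathfrak{g}[-1])$ with zero differential.

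Next I would check that the natural comparison map
\[
	\coChev(\mathfrak{g})|_{[\Ran X \times \Ran X]_\disj} \longrightarrow \bigl(\coChev(\mathfrak{g}) \boxtimes \coChev(\mathfrak{g})\bigr)|_{[\Ran X \times \Ran X]_\disj}
\]
is an equivalence. Both sides carry compatible weight filtrations induced from the filtration on $\coChev(\mathfrak{g})$. The hypothesis $\mathfrak{g} \in \coLiestar(X)^{\geq 1}$ in the perverse $t$-structure ensures that $\Sym^{n,\star}(\mathfrak{g}[-1])$ is concentrated in perverse degrees bounded below by a function of $n$ tending to $\infty$, so the filtration $F^{\leq n}$ converges in each fixed perverse degree on $\Ran X \times \Ran X$, and the problem reduces to proving factorizability of $\Sym^\star(\mathfrak{g}[-1])$. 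For $\mathfrak{g}[-1]$ supported on the diagonal $X \hookrightarrow \Ran X$, the symmetric power $\Sym^{n,\star}(\mathfrak{g}[-1])$ is the $!$-pushforward along $X^n \to \Ran X$ of the $\Sigma_n$-coinvariants of $(\mathfrak{g}[-1])^{\boxtimes n}$; the preimage of the disjoint locus in $X^n$ decomposes as $\bigsqcup_{p+q=n}(X^p \times X^q)_\disj$, and summing the resulting binomial contributions over $n$ produces the desired factorization equivalence, matching the identity $\Sym^n(V\oplus V) \simeq \bigoplus_{p+q=n}\Sym^p V\otimes \Sym^q V$.

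The main obstacle is the middle step, namely justifying that factorizability descends along a convergent weight filtration in the perverse $t$-structure. The connectivity bound $\geq 1$ plays here precisely the role that the $\indnilp$-condition plays in Theorem~\ref{thm:intro:Koszul_duality_connectivity_on_Ran} (cf.\ footnote~\ref{fn:indnilp_vs_ord}): it supplies the shift in perverse degrees needed for each weight to contribute bounded-below and for $F^{\leq n}$ to stabilize in every fixed perverse degree. Once this convergence is in place, the remaining combinatorial check of factorizability for $\Sym^\star$ on $X$ is essentially formal.
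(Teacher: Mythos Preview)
Your approach matches the paper's: endow $\coChev(\mathfrak{g})$ with a co-filtration whose associated graded is $\Sym^{n,\star}(\mathfrak{g}[-1])$, use the co-connectivity $\geq 1$ to obtain convergence, and reduce to the (direct) factorizability of $\Sym$. The paper packages this as an $\addCoFil$ trick and, at the step you correctly flag as the main obstacle, separates the work into showing that $\oblv_{\coFil}$ is symmetric monoidal on the \emph{stabilizing} subcategory (hence commutes with $\coChev$ and preserves factorizability) while $\assgr$ and $\prod \simeq \bigoplus$ reflect it---the identification $\prod \simeq \bigoplus$ on decaying graded objects being precisely where the hypothesis $\geq 1$ enters; one minor slip in your write-up is that for a commutative algebra the factorization comparison map runs $j^!(\matheur{B} \boxtimes \matheur{B}) \to j^!\union^! \matheur{B}$, not in the coalgebra direction you wrote.
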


\subsubsection{Interaction between $\coChev$ and factorization homology} In~\cite{francis_chiral_2011}, it is proved that the functor of taking factorization homology
\[
	C^*_c: \Shv(\Ran X) \to \Vect
\]
commutes with $\Chev$. This is because $\Chev$ is computed as a colimit, and moreover, $C^*_c$ has the following two useful properties
\begin{enumerate}[\quad (i)]
	\item $C^*_c$ is symmetric monoidal with respect to the $\otimesstar$-monoidal structure on $\Shv(\Ran X)$ and the usual monoidal structure on $\Vect$, and
	
	\item $C^*_c$ is continuous.
\end{enumerate}

The functor $\coChev$, however, is constructed as a limit, so we need some extra conditions to make it behave nicely with $C^*_c$.

\begin{thm}[Theorem~\ref{thm:coChev_and_C^*_c(Ran)}] \label{thm:intro:coChev_and_C^*_c(Ran)}
Let $X$ be a proper scheme of pure dimension $d$ and $\mathfrak{g} \in \coLiestar(X)^{\geq d+1}$. Then we have a natural equivalence
\[
	C^*_c(\Ran X, \coChev \mathfrak{g}) \simeq \coChev(C^*_c(\Ran X, \mathfrak{g})).
\]
\end{thm}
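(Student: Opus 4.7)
The key tension in the statement is that $\coChev$ is a \emph{right} adjoint (to $\coPrim[1]$) and hence is computed by limits, whereas $C^*_c$ is continuous---it commutes with colimits, not limits, out of the box. My plan is to present $\coChev(\mathfrak{g})$ as the limit of a tower of finite-stage approximations, commute $C^*_c$ past each stage using its symmetric monoidality and continuity, and then use the co-connectivity hypothesis to interchange $C^*_c$ with the remaining inverse limit.

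Concretely, I would use the filtration of $\coChev(\mathfrak{g})$ by symmetric degree, writing $\coChev(\mathfrak{g}) \simeq \lim_n \coChev_{\leq n}(\mathfrak{g})$, where $\coChev_{\leq n}(\mathfrak{g})$ is built from at most $n$-fold $\otimesstar$-tensor powers of $\mathfrak{g}$ together with finite colimits, with the fiber of $\coChev_{\leq n} \to \coChev_{\leq n-1}$ controlled by $\Sym^n(\mathfrak{g}[-1])$. Because $C^*_c$ is symmetric monoidal for $\otimesstar$ and continuous, a straightforward induction produces, naturally in $n$, an equivalence
\[
    C^*_c(\Ran X, \coChev_{\leq n}(\mathfrak{g})) \simeq \coChev_{\leq n}(C^*_c(\Ran X, \mathfrak{g})).
\]

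The real obstacle is the interchange with $\lim_n$. This is where the hypothesis $\mathfrak{g} \in \coLiestar(X)^{\geq c_{cL}}$ is indispensable: under the relevant perverse $t$-structure convention, the symmetric powers $\Sym^n(\mathfrak{g}[-1])$ acquire co-connectivity growing linearly in $n$, so the fibers of the tower $\{\coChev_{\leq n}(\mathfrak{g})\}$ become arbitrarily co-connective. To push this estimate through $C^*_c$, one needs to know that $C^*_c(\Ran X, -)$ shifts co-connectivity by only a bounded amount on the class of sheaves in question. I expect such a bound to follow by stratifying $\Ran X$ via the symmetric powers $\Sym^n X$, using the finite cohomological amplitude of $C^*_c$ on each stratum, and assembling the strata via a spectral sequence to produce the global estimate. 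This is the technical heart of the argument and exploits the machinery for non-finitary prestacks set up earlier in the paper.

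Once this bound is in hand, the fibers of the tower $\{C^*_c(\Ran X, \coChev_{\leq n}(\mathfrak{g}))\}$ are themselves arbitrarily co-connective, so $\lim_n$ of this tower is computed termwise and agrees with $\coChev(C^*_c(\Ran X, \mathfrak{g}))$. Naturality of the final equivalence in $\mathfrak{g}$ is automatic from the naturality of all intermediate constructions.
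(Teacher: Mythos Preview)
Your overall strategy matches the paper's: both filter $\coChev(\mathfrak{g})$ by symmetric degree via the $\addCoFil$ construction, identify $C^*_c(\Ran X,\coChev_{\leq n}\mathfrak{g})\simeq\coChev_{\leq n}(C^*_c(X,\mathfrak{g}))$ at each finite stage using that $C^*_c$ is symmetric monoidal and continuous, and then try to pass to the limit.

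The gap is in the interchange step. Showing that the successive fibers $C^*_c(\Ran X,\Sym^n(\mathfrak{g}[-1]))$ become arbitrarily co-connective only tells you that the tower $\{C^*_c(\Ran X,\coChev_{\leq n}\mathfrak{g})\}$ stabilizes after any fixed truncation; it does \emph{not} by itself identify its inverse limit with $C^*_c(\Ran X,\lim_n\coChev_{\leq n}\mathfrak{g})$. For that you would need to control $C^*_c$ applied to the entire tail $\Fib(\coChev\mathfrak{g}\to\coChev_{\leq n}\mathfrak{g})$, and this object has unbounded support on $\Ran X$, so no uniform amplitude bound on $C^*_c(\Ran X,-)$ is available. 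Your proposed fix via stratification by the $\oversetsupscript{X}{\circ}{I}$ points in the right direction, but the resulting ``spectral sequence'' has infinitely many nonzero columns and its convergence is precisely the issue at hand.

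The paper resolves this by running a \emph{second}, dual filtration: it writes $C^*_c(\Ran X,-)$ as the colimit over the truncated Ran spaces $\Ran^{\leq M}X$, proves that for $\matheur{F}=\coChev\mathfrak{g}$ (and for each $\coChev_{\leq n}\mathfrak{g}$) the contribution of the stratum $\oversetsupscript{X}{\circ}{I}$ lives in cohomological degrees $\geq |I|+1$, so that after any $\tau_{\leq m}$ only finitely many strata contribute. This reduces the interchange to $C^*_c(\Ran^{\leq M}X,-)$ for a fixed finite $M$. The crucial point you omit is the standing hypothesis that $X$ is \emph{proper}: this makes $C^*_c(X^I,-)\simeq C^*(X^I,-)$, hence $C^*_c(\Ran^{\leq M}X,-)$ is a finite colimit of functors that individually commute with limits, and the interchange becomes formal. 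Without properness this step fails, and the paper's stated co-connectivity bound $\mathfrak{g}\in\coLiestar(X)^{\geq 1+d}$ is calibrated exactly so that the stratum estimate beats the dimension of $\oversetsupscript{X}{\circ}{I}$.
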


\subsubsection{$\Chev$, $\coChev$ and Verdier duality} Unsurprisingly, the functors $\Chev$ and $\coChev$ mentioned above are linked via the Verdier duality functor on $\Ran X$.

\begin{thm}[Theorem~\ref{thm:Chev_coChev_and_D_Ran}] \label{thm:intro:Chev_coChev_and_D_Ran}
	Let $\mathfrak{g} \in \Liestar(X)^{\leq -1}$, where we are using the perverse $t$-structure on $X$. Then we have the following natural equivalence
	\[
		D_{\Ran X} \Chev \mathfrak{g} \simeq \coChev (D_X \mathfrak{g}).
	\]
\end{thm}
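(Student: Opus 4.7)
The plan is to present both sides via explicit bar/cobar models built from $\otimesstar$-symmetric powers on $\Ran X$, and then to interchange them via $D_{\Ran X}$. Recall that $\Chev\mathfrak{g}$ is the geometric realization of the standard simplicial bar object whose $n$-th piece is $(\mathfrak{g}[1])^{\otimesstar n}$, with Chevalley differentials encoding the Lie bracket of $\mathfrak{g}$; as a graded object for the weight filtration, $\Chev\mathfrak{g}\simeq\bigoplus_n\Sym^n_\star(\mathfrak{g}[1])$, each summand being the pushforward of $\Sym^n(\mathfrak{g}[1])$ along the symmetric $n$-fold diagonal $X^n/S_n\hookrightarrow\Ran X$. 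Dually, $\coChev(D_X\mathfrak{g})$ is the totalization of the cobar cosimplicial object with $n$-th term $(D_X\mathfrak{g}[-1])^{\otimesstar n}$, associated graded $\prod_n\Sym^n_\star(D_X\mathfrak{g}[-1])$, and cobar differentials encoding the co-Lie cobracket. Observe that $\mathfrak{g}\in\Liestar(X)^{\leq -1}$ sends $D_X\mathfrak{g}$ into $\coLiestar(X)^{\geq 1}$, so Theorem~\ref{thm:intro:factorizability_coChev} ensures that the right-hand side is automatically factorizable.

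Next I would apply $D_{\Ran X}$ to the bar colimit for $\Chev\mathfrak{g}$. As a contravariant equivalence on the relevant subcategory, it converts the geometric realization into a totalization, and I would verify the termwise identity
\[
D_{\Ran X}\bigl(\Sym^n_\star(\mathfrak{g}[1])\bigr)\simeq\Sym^n_\star(D_X\mathfrak{g}[-1]).
\]
Since $X$ is proper, the symmetric diagonal $X^n/S_n\to\Ran X$ is proper on each stratum, so $*$- and $!$-pushforwards from $X^n$ agree; Verdier duality on $\Ran X$ thus restricts to Verdier duality on $X^n$, where the \Kunneth{} identity $D_{X^n}(\mathcal{F}^{\boxtimes n})\simeq(D_X\mathcal{F})^{\boxtimes n}$, together with its $S_n$-equivariance, yields the claim. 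The same propertness identifies $\otimesstar$ with $\otimesshriek$ on these pushforwards, so that although $D_{\Ran X}$ generally exchanges the two monoidal structures on $\Shv(\Ran X)$, the output on our pieces stays in the $\otimesstar$-world and is a genuine $\otimesstar$-commutative algebra. The bar differentials, assembled from the Lie bracket $\mathfrak{g}^{\otimesstar 2}\to\mathfrak{g}$, dualize to the co-Lie cobracket $D_X\mathfrak{g}\to(D_X\mathfrak{g})^{\otimesstar 2}$ defining $D_X\mathfrak{g}$; these are precisely the cobar differentials of $\coChev(D_X\mathfrak{g})$, so matching the two presentations yields the desired equivalence, naturally in $\mathfrak{g}$.

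The connectivity hypothesis is essential for convergence. It forces the weight-$n$ piece $\Sym^n_\star(\mathfrak{g}[1])$ to be supported in increasingly coconnective perverse degrees as $n$ grows (the $n$-fold diagonal contributes an additional shift), so that at each cohomological degree only finitely many weights contribute; $D_{\Ran X}$ then commutes with the infinite direct sum defining $\Chev\mathfrak{g}$, and dually the totalization computing $\coChev(D_X\mathfrak{g})$ converges degreewise.

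I expect the main obstacle to be the rigorous execution of the termwise duality together with its compatibility across the bar/cobar structure maps. One must track the $S_n$-equivariances and all simplicial face maps (encoding the Lie bracket versus the cobracket) through $D_{\Ran X}$ and verify that the $\otimesstar$/$\otimesshriek$ identification on pushforwards from the symmetric diagonals propagates coherently, so that the resulting totalization genuinely models $\coChev(D_X\mathfrak{g})$ rather than merely having the correct associated graded. Once this coherence is in place, the two presentations agree on the nose.
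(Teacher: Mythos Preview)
Your overall strategy---filter $\Chev\mathfrak{g}$ by weight, use that $D_{\Ran X}$ converts the resulting colimit into a limit, verify the termwise identity, and invoke connectivity to make the limit on the $\coChev$ side converge---is precisely the paper's approach, which is packaged there via the $\addFil$/$\addCoFil$ machinery (so that $\Chev\mathfrak{g}\simeq\colim_i\Chev^i\mathfrak{g}$ and $\coChev(D_X\mathfrak{g})\simeq\lim_i\coChev^i(D_X\mathfrak{g})$, reducing to $D_{\Ran X}\Chev^i\mathfrak{g}\simeq\coChev^i(D_X\mathfrak{g})$). So the architecture is right.

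There is, however, a concrete error in your termwise step: you assume $X$ is proper in order to identify $*$- and $!$-pushforwards along the symmetric diagonals. The theorem carries no such hypothesis, and its main application in the paper is to the open curve $X'\subset X$, which is \emph{not} proper. The correct input is that for any separated $X$ the insertion $X\hookrightarrow\Ran X$ and the union map $\union:(\Ran X)^k\to\Ran X$ are finitary pseudo-proper; this is what the paper uses to show that $D_{\Ran X}$ commutes with $f_!$ for such maps and hence with $\otimesstar$ on sheaves supported in some $\Ran^{\leq n}X$. That immediately gives $D_{\Ran X}(\Chev^i\mathfrak{g})\simeq\coChev^i(D_X\mathfrak{g})$ without any properness of $X$, and it also dispenses with the coherence worries you raise about the bar/cobar face maps, since the whole finite-stage object is handled at once. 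Your side remark that $D_{\Ran X}$ ``exchanges $\otimesstar$ and $\otimesshriek$'' is a red herring and should be dropped; no such exchange enters the argument.
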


\begin{rmk}
The connectivity constraint $\Liestar(X)^{\leq -1}$ is, as we shall see, less strict than the connectivity constraint $\Liestar(X)^{\leq c_L}$ required by Theorem~\ref{thm:intro:Koszul_duality_connectivity_on_Ran}.
\end{rmk}

\begin{cor} \label{thm:intro:factorizability_D_Chev}
Let $\mathfrak{g} \in \Liestar(X)^{\leq c_L}$. Then
\[
	D_{\Ran X} \Chev \mathfrak{g} \simeq \coChev(D_X \mathfrak{g})
\]
is factorizable.
\end{cor}
\begin{proof}
This is a direct consequence of Theorem~\ref{thm:intro:factorizability_coChev} and Theorem~\ref{thm:intro:Chev_coChev_and_D_Ran}.
\end{proof}

\subsection{Relation to the Atiyah-Bott formula}
Our results could be used to simplify the second step of the proof of the Atiyah-Boot formula in two places, which we will sketch in~\S\ref{subsubsec:intro:factorizability_D_Chev} and~\S\ref{subsubsec:intro:Verdier_dual_linear_dual} below. A more detailed exposition will be given in~\S\ref{sec:application_Atiyah-Bott}.

\subsubsection{Factorizability of $D_{\Ran X} \Chev \mathfrak{a}$} \label{subsubsec:intro:factorizability_D_Chev} The initial observation is that the sheaf $\matheur{A}$ mentioned above lies in the essential image of $\Chev$, i.e.
\[
	\matheur{A} \simeq \Chev(\mathfrak{a}), \qquad\text{for some } \mathfrak{a} \in \Liestar(X)^{\leq c_L}.
\]
This is a direct result of Theorem~\ref{thm:intro:Koszul_duality_connectivity_on_Ran} and the fact that $\matheur{A}$ satisfies this connectivity constraint on the $\ComCoAlgstar$ side.

As mentioned above, we have a pairing
\[
	\matheur{A} \boxtimes \matheur{B} \to \delta_! \omega_{\Ran X},
\]
which induces a map
\[
	\matheur{B} \to D_{\Ran X} \matheur{A},
\]
compatible with the commutative algebra structures on both sides. Thus, we get a map
\[
	\matheur{B} \to D_{\Ran X}\Chev(\mathfrak{a}) \simeq \coChev(D_X \mathfrak{a}),
\]
which we want to be an equivalence. By construction, the LHS is factorizable. Corollary~\ref{thm:intro:factorizability_D_Chev} can be used to show that the RHS is also factorizable. Thus it suffices to show that they are isomorphic over $X$, which is now a local problem, and the same proof as in~\cite{gaitsgory_atiyah-bott_2015} applies.

\subsubsection{Verdier duality vs. linear dual.} \label{subsubsec:intro:Verdier_dual_linear_dual}
The results proved in this paper could also be used to give an alternative proof of the equivalence
\[
	C^*_c(\Ran X, D_{\Ran X} \matheur{A}) \simeq C^*_c(\Ran X, \matheur{A})^\vee.
\]
at~\eqref{eq:intro_verdier_homology}. Indeed, we have
\begin{align*}
	C^*_c(\Ran X, D_{\Ran X} \matheur{A}) 
	&\simeq C^*_c(\Ran X, D_{\Ran X} \Chev \mathfrak{a}) \\
	&\simeq C^*_c(\Ran X, \coChev D_X \mathfrak{a}) \tag{Theorem~\ref{thm:intro:Chev_coChev_and_D_Ran}} \\
	&\simeq \coChev(C^*_c(X, D_X \mathfrak{a})) \tag{Theorem~\ref{thm:intro:coChev_and_C^*_c(Ran)}} \\
	&\simeq \coChev(C^*_c(X, \mathfrak{a})^\vee) \\
	&\simeq \Chev(C^*_c(X, \mathfrak{a}))^\vee \tag{Theorem~\ref{thm:intro:Chev_coChev_and_D_Ran} for $X = \pt$} \\
	&\simeq C^*_c(\Ran X, \Chev \mathfrak{a})^\vee \tag{\cite[Proposition 6.3.6]{francis_chiral_2011}} \\
	&\simeq C^*_c(\Ran X, \matheur{A})^\vee
\end{align*}

\section{Preliminaries} \label{sec:prelims}
In this section, we will set up the language and conventions used throughout the paper. Since the material covered here are used in various places, the readers should feel free to skip it and backtrack when necessary.

The mathematical content in this section has already been treated elsewhere. Hence, results are stated without any proof, and we will do our best to provide the necessary references. It is important to note that it is not our aim to be exhaustive. Rather, we try to familiarize the readers with the various concepts and results used in the text, as well as to give pointers to the necessary references for the background materials.

\subsection{Notation and conventions}
\subsubsection{Category theory} We will use $\DGCat$ to denote the $(\infty, 1)$-category of stable infinity categories, $\DGCatpres$ to denote the full subcategory of $\DGCat$ consisting of presentable categories, and $\DGCatprescont$ the (non-full) subcategory of $\DGCatpres$ where we restrict to continuous functors, i.e. those commuting with colimits. $\Spc$ will be used to denote the category of spaces, or more precisely, $\infty$-groupoids.

The main references for this subject are~\cite{lurie_higher_2017} and~\cite{lurie_higher_2017-1}. For a slightly different point of view, see also~\cite{gaitsgory_study_2017}.

\subsubsection{Algebraic geometry} \label{subsubsec:ag_convention} Throughout this paper, $k$ will be an algebraically closed ground field. We will denote by $\Sch$ the $\infty$-category obtained from the ordinary category of separated schemes of finite type over $k$. All our schemes will be objects of $\Sch$. A scheme $X\in \Sch$ is said to be smooth if it is smooth over $k$.

In most cases, we will use the calligraphic font to denote prestacks, for eg. $\matheur{X}, \matheur{Y}$ etc., and the usual font to denote schemes, for eg. $X, Y$ etc.

\subsubsection{$t$-structures} \label{subsubsec:t-structure_convention} Let $\matheur{C}$ be a stable infinity category, equipped with a $t$-structure. Then we have the following diagram of adjoint functors
\[
\xymatrix{
	\matheur{C}^{\leq 0} \ardis[r]^>>>>>{i_{\leq 0}} & \ardis[l]^>>>>>{\tr_{\leq 0}} \matheur{C} \ardis[r]^<<<<<{\tr_{\geq 1}} & \ardis[l]^>>>>>{i_{\geq 1}} \matheur{C}^{\geq 1}
}
\]

We use $\tau_{\leq 0}$ and $\tau_{\geq 1}$ to denote
\[
\tau_{\leq 0} = i_{\leq 0} \circ \tr_{\leq 0}: \matheur{C} \to \matheur{C}
\]
and
\[
\tau_{\geq 1} = i_{\geq 1} \circ \tr_{\geq 1}: \matheur{C} \to \matheur{C}
\]
respectively.

Shifts of these functors, for e.g. $\tau_{\geq n}$ and $\tau_{\leq n}$, are defined in the obvious ways.

\subsection{Prestacks} 
The theory of sheaves on prestacks has been developed in~\cite{gaitsgory_weils_2014} and~\cite{gaitsgory_atiyah-bott_2015}. In this subsection and the next, we will give a brief review of this theory, including  the definition of the category of sheaves as well as various pull and push functors. We will state them as facts, without any proof, which (unless otherwise specified), could all be found in~\cite{gaitsgory_atiyah-bott_2015}.

\subsubsection{} A prestack is a contravariant functor from $\Sch$ to $\Spc$, i.e. a prestack $\matheur{Y}$ is a functor
\[
	\matheur{Y}: \Sch^{\op} \to \Spc.
\]
Let $\PreStk$ be the $\infty$-category of prestacks. Then by Yoneda's lemma, we have a fully-faithful embedding
\[
	\Sch \hookrightarrow \PreStk.
\]

\subsubsection{Properties of prestacks} Due to categorical reasons, any prestack $\matheur{Y}$ can be written as a colimit of schemes
\[
	\matheur{Y} \simeq \colim_{i\in I} Y_i.
\]

\subsubsection{} A prestack is said to be is a pseudo-scheme if it could be written as a colimit of schemes, where all morphisms are proper. 

\subsubsection{} A prestack is pseudo-proper if it could be written as a colimit of proper schemes. It is straightforward to see that pseudo-proper prestacks are pseudo-schemes.

\subsubsection{} A prestack is said to be finitary if it could be expressed as a finite colimit of schemes. 

\subsubsection{} We also have relative versions of the definitions above in an obvious manner. Namely, we can speak of a morphism $f: \matheur{Y} \to S$, where $\matheur{Y}$ is a prestack and $S$ is a scheme, being pseudo-schematic (resp. pseudo-proper, finitary).

\subsubsection{} More generally, a morphism
\[
	f: \matheur{Y}_1 \to \matheur{Y}_2
\]
is said to be pseudo-schematic (resp. pseudo-proper, finitary) if for any scheme $S$, equipped with a morphism $S \to \matheur{Y}_2$, the morphism $f_S$ in the following pull-back diagram
\[
\xymatrix{
	S \times_{\matheur{Y}_2} \matheur{Y}_1 \ar[d]_{f_S} \ar[r] & \matheur{Y}_1 \ar[d] \\
	S \ar[r] & \matheur{Y}_2
}
\]
is pseudo-schematic (resp. pseudo-proper, finitary).

\subsection{Sheaves on prestacks}
As we mentioned above, proofs of all the results in mentioned in this section, unless otherwise specified, could be found in~\cite{gaitsgory_atiyah-bott_2015}.

\subsubsection{Sheaves on schemes} We will adopt the same conventions as in~\cite{gaitsgory_atiyah-bott_2015}, except that for simplicity, we will restrict ourselves to the ``constructible setting.'' Namely, for a scheme $S$,

\begin{enumerate}[(i)]
	\item when the ground field is $\mathbb{C}$, and $\Lambda$ is an arbitrary field of characteristic 0, we take $\Shv(S)$ to be the ind-completion of the category of constructible sheaves on $S$ with $\Lambda$-coefficients.
	
	\item for any ground field $k$ in general, and $\Lambda = \Ql, \Qlbar$ with $\ell \neq \car k$, we take $\Shv(S)$ to be the ind-completion of the category of constructible $\ell$-adic sheaves on $S$ with $\Lambda$-coefficients. See also~\cite[\S4]{gaitsgory_weils_2014}, \cite{liu_enhanced_2012}, and~\cite{liu_enhanced_2014}.
\end{enumerate}

The theory of sheaves on schemes is equipped with the various pairs of adjoint functors
\[
	f_! \dashv f^! \qquad\text{and}\qquad f^* \dashv f_*
\]
for any morphism
\[
	f: S_1 \to S_2
\]
between schemes. Moreover, we have box-product $\boxtimes$ as well as $\otimes$ and $\otimesshriek$. 

\subsubsection{} Throughout the text, we will use the perverse $t$-structure on $\Shv(S)$, when $S$ is a scheme.

\subsubsection{} We will also use $\Vect$ to denote the category of sheaves on a point, i.e. $\Vect$ denotes the (infinity derived) category of chain complexes in vector spaces over $\Lambda$.

\subsubsection{Sheaves on prestacks} For a prestack $\matheur{Y}$, the category $\Shv(\matheur{Y})$ is defined by
\[
	\Shv(\matheur{Y}) = \lim _{S\in (\Sch_{/\matheur{Y}}^{\op})} \Shv(S),
\]
where the transition functor we use is the !-pullback.

Informally speaking, an object $\matheur{F} \in \Shv(\matheur{Y})$ is the same as the following data
\begin{enumerate}[(i)]
	\item a sheaf $\matheur{F}_{S, y} \in \Shv(S)$ for each $S\in \Sch$ and $y: S \to \matheur{Y}$ (i.e. $y\in \matheur{Y}(S)$), and
	\item an equivalence of sheaves $\matheur{F}_{S', f(y)} \to f^! \matheur{F}_{S, y}$ for each morphism of schemes $f: S' \to S$.
\end{enumerate}
Moreover, we require that this assignment satisfies a homotopy-coherent system of compatibilities.

\subsubsection{} More formally, one can define $\Shv(\matheur{Y})$ as the right Kan extension of
\[
	\Shv: \Sch^{\op} \to \DGCatprescont
\]
along the Yoneda embedding
\[
	\Sch^{\op} \hookrightarrow \PreStk^{\op}.
\]

Thus, by formal reasons, the functor
\[
	\Shv: \PreStk^{\op} \to \DGCatprescont
\]
preserves limits. In other words, we have
\[
	\Shv(\colim_i \matheur{Y}_i) \simeq \lim_i \Shv(\matheur{Y}_i).
\]

In particular, if a prestack
\[
	\matheur{Y} \simeq \colim_{i\in I} Y_i
\]
is a colimit of schemes, then
\[
	\Shv(\matheur{Y}) \simeq \lim_{i\in I} \Shv(Y_i).
\]

\subsubsection{} Now, if we replace all the transition functors by their left adjoints, namely the $!$-pushforward, then we have a diagram
\[
	I^\op \to \DGCatprescont,
\]
and we have a natural equivalence
\[
	\Shv(\matheur{Y}) \simeq \colim_{i\in I^\op} \Shv(Y_i)
\]
where the colimit is taken inside $\DGCatprescont$.

\subsubsection{} Let
\[
	\matheur{Y} = \colim_i Y_i
\]
be a prestack, and denote
\[
	\ins_i: Y_i \to \matheur{Y}
\]
the canonical map. Then, for any sheaf $\matheur{F} \in \Shv(\matheur{Y})$, we have the following natural equivalence
\[
	\matheur{F} \simeq \colim_i \ins_{i!} \ins_i^! \matheur{F} \teq\label{eq:sheaves_colimits_rep}
\]

\subsubsection{$f_! \dashv f^!$} 
Let
\[
	f: \matheur{Y}_1 \to \matheur{Y}_2
\]
be a morphism between prestacks. Then by restriction, we get a functor
\[
	f^!: \Shv(\matheur{Y}_2) \to \Shv(\matheur{Y}_1),
\]
which commutes with both limits and colimits. In particular, $f^!$ admits a left adjoint $f_!$.\footnote{It also admits a right adjoint. However, we do not make use of it in this paper.}

The functor $f_!$ is generally not computable. However, there are a couple of cases where it is.

\subsubsection{} The first instance is when the target of $f$ is a scheme
\[
	f: \matheur{Y} \to S,
\]
and suppose that
\[
	\matheur{Y} \simeq \colim_i Y_i.
\]
Then, by~\eqref{eq:sheaves_colimits_rep}, we have
\[
	f_! \matheur{F} \simeq \colim f_! \ins_{i!}\ins_i^! \matheur{F} \simeq \colim f_{i!} \ins_i^! \matheur{F}.
\]
where
\[
	f_i: Y_i \to \matheur{Y} \to S
\]
is just a morphism between schemes.

\subsubsection{} The second case is where $f$ is pseudo-proper, then $f_!$ satisfies the base change theorem with respect to the $(-)^!$-pullback. Namely, for any pull-back diagram of prestacks
\[
\xymatrix{
	\matheur{Y}'_1 \ar[d]_{f} \ar[r]^g & \matheur{Y}_1 \ar[d]_{f} \\
	\matheur{Y}'_2 \ar[r]^g & \matheur{Y}_2
}
\]
and any sheaf $\matheur{F} \in \Shv(\matheur{Y})$, we have a natural equivalence
\[
	g^! f_! \matheur{F} \simeq f_! g^! \matheur{F}.
\]

Thus, in particular, if we have a pull-back diagram
\[
\xymatrix{
	S \times_{\matheur{Y}_2} \matheur{Y}_1 \ar[d]_{f_S} \ar[r]^{i_S} & \matheur{Y}_1 \ar[d]_f \\
	S \ar[r]^{i_S} & \matheur{Y}_2
}
\]
where $S$ is a scheme, then
\[
	i_S^! f_! \matheur{F} \simeq f_{S!} i_S^!\matheur{F}
\]
and as discussed above, $f_{S!}$ could be computed as an explicit colimit.

\subsubsection{} Let $\matheur{F} \in \Shv(\matheur{Y})$. Then we denote by
\[
	C^*_c(\matheur{Y}, \matheur{F}) = s_! \matheur{F},
\]
where
\[
	s: \matheur{Y} \to \Spec k
\]
is the structural map of $\matheur{Y}$ to a point.

\subsubsection{} In case where $\matheur{F} \simeq \omega_{\matheur{Y}}$ is the dualizing sheaf on $\matheur{Y}$ (characterized by the fact that its $(-)^!$-pullback to any scheme is the dualizing sheaf on that scheme), then we write
\[
	C_*(\matheur{Y}) = C^*_c(\matheur{Y}, \omega_{\matheur{Y}}),
\]
and
\[
	C_*^{\red}(\matheur{Y}) = \Fib(C_*(\matheur{Y}) \to \Lambda).
\]

\subsubsection{$f^* \dashv f_*$} When
\[
	f: \matheur{Y}_1 \to \matheur{Y}_2
\]
is a schematic morphism between prestacks, one can also define a pair of adjoint functors (see~\cite{gaitsgory_atiyah-bott_2015} where the functor $f_*$ is defined, and~\cite{ho_free_2017} where the adjunction is constructed)
\[
	f^*: \Shv(\matheur{Y}_2) \rightleftarrows \Shv(\matheur{Y}_1): f_*.
\]

\subsubsection{} The behavior of $f_*$ is easy to describe, due to the fact that $f_*$ satisfies the base change theorem with respect to the $(-)^!$-pullback functor. Namely, suppose $\matheur{F} \in \Shv(\matheur{Y}_1)$ and we have a pullback square where $S_2$ (and hence, $S_1$) is a scheme
\[
\xymatrix{
	S_1 \ar[r]^g \ar[d]_{f_S} & \matheur{Y}_1 \ar[d]_{f} \\
	S_2 \ar[r]^g & \matheur{Y}_2
}
\]
Then, the pullback could be described in classical terms, since
\[
	g^! f_* \matheur{F} \simeq f_{S*} g^! \matheur{F}, \teq\label{eq:base_change_lower*_upper!}
\]
where $f_S$ is just a morphism between schemes.

\subsubsection{} The functor $f^*$ is slightly more complicated to describe. However, when
\[
	f: \matheur{Y}_1 \to \matheur{Y}_2
\]
is \etale{}, which is the case where we need, we have a natural equivalence (see~\cite[Prop. 2.7.3]{ho_free_2017})
\[
	f^! \simeq f^*. \teq\label{eq:etale_!_*_pullback_equiv}
\]

\subsubsection{} We will also need the following fact in the definition of commutative factorizable co-algebras: let
\[
\xymatrix{
	\matheur{U} \ar[r]^f & \matheur{Z} \ar[r]^g & \matheur{X}
}
\]
be morphisms between prestacks, where $g$ is finitary pseudo-proper, $f$ and $h = g\circ f$ are schematic. Then we have a natural equivalence (see~\cite[Prop. 2.10.4]{ho_free_2017})
\[
	g_! \circ f_* \simeq (g\circ f)_* \simeq h_*. \teq\label{eq:pushforward_!circ*_equiv_*}
\]

\subsubsection{Monoidal structure} The theory of sheaves on prestacks discussed so far naturally inherits the box-tensor structure from the theory of sheaves on schemes. Namely, let $\matheur{F}_i\in \Shv(\matheur{Y}_i)$ where $\matheur{Y}_i$'s are prestacks, for $i=1, 2$. Then, for any pair of schemes $S_1, S_2$ equipped with maps
\[
	f_i: S_i\to \matheur{Y}_i,
\]
we have 
\[
	(f_1 \times f_2)^! (\matheur{F}_1 \boxtimes \matheur{F}_2) \simeq f_1^! \matheur{F}_1 \boxtimes f_2^! \matheur{F}_2.
\]

Pulling back along the diagonal
\[
	\delta: \matheur{Y} \to \matheur{Y} \times \matheur{Y}
\]
for any prestack $\matheur{Y}$, we get the $\otimesshriek$-symmetric monoidal structure on $\matheur{Y}$ in the usual way. More explicitly, for $\matheur{F}_1, \matheur{F}_2 \in \Shv(\matheur{Y})$, we define
\[
	\matheur{F}_1 \otimesshriek\matheur{F}_2 = \delta^! (\matheur{F}_1 \boxtimes \matheur{F}_2).
\]

\subsection{The $\Ran$ space/prestack}
The $\Ran$ space (or more precisely, prestack) of a scheme plays a central role in this paper. The $\Ran$ space, along with various objects on it, was first studied in the seminal book~\cite{beilinson_chiral_2004} in the case of curves, and was generalized to higher dimensions in~\cite{francis_chiral_2011}. In what follows, we will quickly review the main definitions and results. For proofs, unless otherwise specified, we refer the reader to~\cite{gaitsgory_atiyah-bott_2015} and~\cite{francis_chiral_2011}. The topologically inclined reader could also find an intuitive introduction in~\cite[\S1]{ho_free_2017}.

\subsubsection{} For a scheme $X \in \Sch$, we will use $\Ran X$ to denote the following prestack: for each scheme $S \in \Sch$,
\[
	(\Ran X)(S) = \{\text{non-empty finite subsets of } X(S)\}
\]
Alternatively, one has
\[
	\Ran X \simeq \colim_{I\in \fSet^{\surj, \op}} X^I
\]
where $\fSet^{\surj}$ denotes the category of non-empty finite sets, where morphisms are surjections.

Using the fact that $X$ is separated, one sees easily that $\Ran X$ is a pseudo-scheme. Moreover, when $X$ is proper, $\Ran X$ is pseudo-proper.

\subsubsection{The $\otimesstar$ monoidal structure}
There is a special monoidal structure on $\Ran X$ which we will use throughout the text: the $\otimesstar$-monoidal structure.

Consider the following map
\[
\xymatrix{
	\union: \Ran X \times \Ran X \to \Ran X
}
\]
given by the union of non-empty finite subsets of $X$. One can check that $\union$ is finitary pseudo-proper. Given two sheaves $\matheur{F}, \matheur{G} \in \Shv(\Ran X)$, we define
\[
	\matheur{F} \otimesstar\matheur{G} = \union_!(\matheur{F} \boxtimes \matheur{G}).
\]

This defines the $\otimesstar$-monoidal structure on $\Shv(\Ran X)$.

\subsubsection{} Since $\union$ is pseudo-proper, $\otimesstar$ has an easy presentation. Namely, for
\[
	\matheur{F}_1, \matheur{F}_2, \dots, \matheur{F}_k \in \Shv(\Ran X),
\]
and any non-empty finite set $I$, we have the following
\[
	(\matheur{F}_1 \otimesstar \matheur{F}_2 \otimesstar\cdots \otimesstar \matheur{F}_k)|_{\oversetsupscript{X}{\circ}{I}} \simeq \bigoplus_{I = \bigcup_{i=1}^k I_i} \Delta^!_{\sqcup_{i=1}^k I_i \surjects \cup_{i=1}^k I_i}(\matheur{F}_1 \boxtimes \cdots \boxtimes\matheur{F}_k)|_{\oversetsupscript{X}{\circ}{I}} \teq \label{eq:otimesstar_explicit_formula}
\]
where $\oversetsupscript{X}{\circ}{I}$ denotes the open subscheme of $X^I$ given by the condition that no two ``coordinates'' are equal, and where
\[
	\Delta_{\sqcup_{i=1}^k I_i \surjects \cup_{i=1}^k I_i}: X^I \hookrightarrow \prod_i X^{I_i}
\]
is the map induced by the surjection
\[
	\bigsqcup_{i=1}^k I_i \surjects \bigcup_{i=1}^k I_i \simeq I.
\]

\subsubsection{Factorizable sheaves}
Using the $\otimesstar$-monoidal structure on $\Shv(\Ran X)$, one can talk about various types of algebras/coalgebras in $\Shv(\Ran X)$. The ones that are of importance to us in this papers are
\[
	\ComAlgstar(\Ran X),\quad \Liestar(\Ran X),\quad \ComCoAlgstar(\Ran X),\quad  \coLiestar(\Ran X).
\]
As the name suggests, these are used, respectively, to denote the categories of commutative algebras, Lie algebras, co-commutative co-algebras and co-Lie co-algebras in $\Shv(\Ran X)$ with respect to the $\otimesstar$-monoidal structure defined above.

\subsubsection{} We use $\Liestar(X)$ and $\coLiestar(X)$ to denote the full subcategories of $\Liestar(\Ran X)$ and $\coLiestar(\Ran X)$ respectively, consisting of objects whose supports are inside the diagonal
\[
	\ins_X: X \hookrightarrow \Ran X
\]
of $\Ran X$.

\subsubsection{} Let
\[
	j: (\Ran X)^n_\disj \to (\Ran X)^n
\]
where $(\Ran X)^n_{\disj}$ is the open sub-prestack of $(\Ran X)^n$ defined by the following condition: for each scheme $S$, $(\Ran X)^n(S)$ consists of $n$ non-empty subsets of $X(S)$, whose graphs are pair-wise disjoint.

\subsubsection{} Let
\[
	\matheur{A} \in \ComCoAlgstar(\Ran X).
\]
Then, by definition, we have the following map (which is the co-multiplication of the commutative co-algebra structure)
\[
	\matheur{A} \to \matheur{A} \otimesstar \matheur{A} \otimesstar \cdots \otimesstar \matheur{A} \simeq \union_!(\matheur{A} \boxtimes \cdots \boxtimes \matheur{A}).
\]
Using the the unit map of the adjunction $j^* \dashv j_*$, we get the following map
\[ 
	\union_!(\matheur{A}\boxtimes \cdots \boxtimes \matheur{A}) \to \union_! j_* j^* (\matheur{A} \boxtimes \cdots \boxtimes \matheur{A}) \simeq (\union \circ j)_* j^!(\matheur{A} \boxtimes \cdots \boxtimes \matheur{A}),
\]
where for the equivalence, we made use of~\eqref{eq:etale_!_*_pullback_equiv} and~\eqref{eq:pushforward_!circ*_equiv_*}.

Altogether, we get a map
\[
	\matheur{A} \to (\union\circ j)_* j^! (\matheur{A} \boxtimes \cdots \boxtimes \matheur{A})
\]
and hence, by adjunction and~\eqref{eq:etale_!_*_pullback_equiv}, we get a map

\[
	j^! \union^! \matheur{A} \to j^! (\matheur{A}\boxtimes \cdots \boxtimes \matheur{A}). \teq\label{eq:commutative_cofac_map}
\]

\begin{defn}
	$\matheur{A}$ is a commutative factorization algebra if the map~\eqref{eq:commutative_cofac_map} is an equivalence for all $n$'s. We use $\coFactstar(X)$ to denote the full subcategory of $\ComAlgstar(\Ran X)$ consisting of co-commutative factorization co-algebras.
\end{defn}

\subsubsection{} Let
\[
	\matheur{B} \in \ComAlgstar(\Ran X).
\]
Then, by definition, we have the following map (which is the multiplication of the commutative algebra structure)
\[
	\union_!(\matheur{B} \boxtimes \matheur{B} \boxtimes \cdots \boxtimes \matheur{B}) \simeq \matheur{B} \otimesstar \matheur{B} \otimesstar \cdots \otimesstar \matheur{B} \to \matheur{B}.
\]
This induces the following map of sheaves
\[
	\matheur{B} \boxtimes \cdots \boxtimes \matheur{B} \to \union^! \matheur{B}
\]
on $(\Ran X)^n$, and hence, a map of sheaves
\[
	j^! (\matheur{B} \boxtimes \cdots \boxtimes \matheur{B}) \to j^!\union^! \matheur{B}. \teq\label{eq:commutative_fac_map}
\]
on $(\Ran X)^n_\disj$.

\begin{defn}
	$\matheur{B}$ is a commutative factorization algebra if the map~\eqref{eq:commutative_fac_map} is an equivalence for all $n$'s. We use $\Factstar(X)$ to denote the full subcategory of $\ComAlgstar(\Ran X)$ consisting of commutative factorization algebras.
\end{defn}

\subsection{Koszul duality}
In this subsection, we will quickly review various concepts and results in the theory of Koszul duality that are relevant to us. This theory, initially developed in~\cite{quillen_rational_1969}, illuminates the duality between co-commutative co-algebras and Lie algebras. It was further developed and generalized in the operadic setting in~\cite{ginzburg_koszul_1994}. In the chiral/factorizable setting, the paper~\cite{francis_chiral_2011} provides us with necessary technical tools and language to carry out many topological arguments in the context of algebraic geometry. The results and definitions we review below could be found in~\cite{francis_chiral_2011} and~\cite{gaitsgory_study_2017}. 

\subsubsection{Symmetric sequences} Let $\Vect^{\Sigma}$ denote the category of symmetric sequences. Namely, its objects are collections
\[
	\matheur{O} = \{\matheur{O}(n), n\geq 1\},
\]
where each $\matheur{O}(n)$ is an object of $\Vect$, acted on by the symmetric group $\Sigma_n$.

The infinity category $\Vect^{\Sigma}$ is equipped with a natural monoidal structure, which we denote by $\star$, and which makes the functor
\[
	\Vect^{\Sigma} \to \Fun(\Vect, \Vect)
\]
given by the following formula
\[
	\matheur{O} \star V = \bigoplus_n (\matheur{O}(n)\otimes V^{\otimes n})_{\Sigma_n}
\]
symmetric monoidal.

\subsubsection{Operads and co-operads} By an operad (resp. co-operad), we will mean an augmented associative algebra (resp. co-algebra) object in $\Vect^{\Sigma}$, with respect to the monoidal structure described above.  We use $\Op$ (resp. $\coOp$) to denote the categories of operads (resp. co-operads).

In general, the Bar and coBar construction gives us the following pair of adjoint functors
\[
	\BarO: \Op \rightleftarrows \coOp: \coBarP.
\]
For an operad $\matheur{O}$ (resp. co-operad $\matheur{P}$), we also use $\matheur{O}^\vee$ (resp. $\matheur{P}^\vee$) to denote $\BarO(\matheur{O})$ (resp. $\coBarP(\matheur{P})$).

\begin{rmk}
In what follows, we will adopt the following convention: all our operads/co-operads will have the property that the augmentation map is an equivalence when restricted to $\matheur{O}(1)$ (resp. $\matheur{P}(1)$). And under this restriction, one can show that the following unit map
\[
	\matheur{O} \to \coBarP\circ \BarO(\matheur{O}),
\]
or in a slightly different notation
\[
	\matheur{O} \to (\matheur{O}^\vee)^\vee,
\]
is an equivalence.
	
\end{rmk}

\subsubsection{Algebras and co-algebras} Let $\matheur{C}$ be a stable presentable symmetric monoidal $\infty$-category compatibly tensored over $\Vect$. Then, an operad $\matheur{O}$ (resp. co-operad $\matheur{P}$) naturally defines a monad (resp. comonad) on $\matheur{C}$. 

Thus, for an operad $\matheur{O}$ (resp. co-operad $\matheur{P}$), one can talk about the category of algebras $\matheur{O}\Palg(\matheur{C})$ (resp. co-algebras $\matheur{P}\Pcoalg(\matheur{C})$) in $\matheur{C}$ with respect to the operad $\matheur{O}$ (resp. co-operad $\matheur{P}$).

As usual (as for any augmented monad), one has the following pairs of adjoint functors
\[
	\Free_{\matheur{O}}: \matheur{C} \rightleftarrows \matheur{O}\Palg(\matheur{C}):\oblv_{\matheur{O}} \qquad \text{and}\qquad \BarO_{\matheur{O}}: \matheur{O}\Palg(\matheur{C}) \rightleftarrows \matheur{C} :\triv_{\matheur{O}}
\]
for an operad $\matheur{O}$, and similarly, the following pairs of adjoint functors
\[
	\oblv_{\matheur{P}}: \matheur{P}\Pcoalg(\matheur{C}) \rightleftarrows \matheur{C} :\coFree_{\matheur{P}} \qquad\text{and}\qquad \cotriv_{\matheur{P}}: \matheur{C} \rightleftarrows \matheur{P}\Pcoalg(\matheur{C}) :\coBarP_{\matheur{P}}
\]
for a co-operad $\matheur{P}$.

\subsubsection{Koszul duality}
The functors mentioned above could be lifted to get a pair of adjoint functors
\[
	\BarO^\enh: \matheur{O}\Palg(\matheur{C}) \rightleftarrows \matheur{P}\Pcoalg(\matheur{C}): \coBarP^\enh \teq \label{eq:adjunction_Koszul_dual}
\]
where $\matheur{P} = \matheur{O}^\vee$ and
\[
	\oblv_{\matheur{P}}\circ \BarO_\matheur{O}^\enh \simeq \BarO_\matheur{O} \qquad\text{and}\qquad \oblv_{\matheur{O}}\circ \coBarP_\matheur{P}^\enh \simeq \coBarP_\matheur{P}.
\]

\subsubsection{Turning Koszul duality into an equivalence} In general, the pair of adjoint functors at~\eqref{eq:adjunction_Koszul_dual} is not an equivalence. One of the main achievements of~\cite{francis_chiral_2011} is to formulate a precise sufficient condition on the base category $\matheur{C}$, namely the pro-nilpotent condition,\footnote{The interested reader could read more about this in~\cite{francis_chiral_2011}, since we do not need this fact in the current work.} which turns~\eqref{eq:adjunction_Koszul_dual} into an equivalence.

One of the main technical points of our paper is to prove another case where Koszul duality is still an equivalence, even when the categories involved are not pro-nilpotent.

The two main instances of Koszul duality that are important in this paper are the duality between $\Lie$-algebras and $\ComCoAlg$-algebras, and $\coLie$-algebras and $\ComAlg$-algebras.

\subsubsection{The case of $\Lie$ and $\ComCoAlg$}
We have the following equivalence of co-operads (see~\cite{francis_chiral_2011}):
\[
\Lie^\vee \simeq \ComCoAlg[1],
\]
where
\[
\ComCoAlg[1](n) \simeq k[n-1]
\]
is equipped with the sign action of the symmetric group $\Sigma_n$. Here, $[n]$ denotes cohomological shift to the left by $n$.

Equivalently, the functor
\[
[1]: \matheur{C} \to \matheur{C}
\]
gives rise to an equivalence of categories
\[
[1]: \ComCoAlg[1](\matheur{C}) \simeq \ComCoAlg(\matheur{C}).
\]

This gives us the following diagram
\[
\xymatrix{
	\Lie(\matheur{C}) \ardissp[ddrr]^{\Chev} \ardis[dd]^{[1]} \ardis[rr]^<<<<<<<<<<<<{\BarO_{\Lie}} && \ardis[ll]^<<<<<<<<<<<{\coBarP_{\ComCoAlg[1]}} \ComCoAlg[1](\matheur{C}) \ardis[dd]^{[1]} \\ \\
	\Lie[-1](\matheur{C}) \ardis[uu]^{[-1]} \ardis[rr]^{\BarO_{\Lie[-1]}} && \ardis[ll]^{\coBarP_{\ComCoAlg}} \ComCoAlg(\matheur{C}) \ardis[uu]^{[-1]} \ar[uull]^{\Prim[-1]}
}
\]

We usually use $\Chev$ to denote 
\[
	\Chev \simeq [1]\circ \BarO_{\Lie} \simeq \BarO_{\Lie[-1]} \circ [1] \teq\label{eq:Chev_and_Bar}
\]
and $\Prim[-1]$ to denote
\[
	\Prim[-1] \simeq \coBarP_{\ComCoAlg[1]}\circ [-1] \simeq [-1]\circ \coBarP_{\ComCoAlg}. \teq\label{eq:Prim[-1]_and_coBar}
\]

\subsubsection{The case of $\coLie$ and $\ComAlg$} \label{subsubsec:prelim_Koszul_coLie_ComAlg} Dually, we have the following equivalence of co-operads
\[
\ComAlg^\vee \simeq \coLie[1],
\]
and similar to the above, the functor
\[
[1]: \matheur{C} \to \matheur{C}
\]
gives rise to an equivalence of categories
\[
[1]: \coLie[1](\matheur{C}) \simeq \coLie(\matheur{C}).
\]

\subsubsection{} This gives us the following diagram
\[
\xymatrix{
	\ComAlg(\matheur{C}) \ardissp[ddrr]^{\coPrim[1]} \ardis[dd]^{[1]} \ardis[rr]^<<<<<<<<<<<<<{\BarO_{\ComAlg}} && \ardis[ll]^>>>>>>>>>>>>>{\coBarP_{\coLie[1]}} \coLie[1](\matheur{C}) \ardis[dd]^{[1]} \\ \\
	\ComAlg[-1](\matheur{C}) \ardis[uu]^{[-1]} \ardis[rr]^>>>>>>>>>>>{\BarO_{\ComAlg[-1]}} && \ardis[ll]^<<<<<<<<<<<{\coBarP_{\coLie}} \coLie(\matheur{C}) \ardis[uu]^{[-1]} \ar[uull]^{\coChev}
}
\]

As above, we usually use $\coChev$ to denote
\[
	\coChev = [-1] \circ \coBarP_{\coLie} \simeq \coBarP_{\coLie[1]} \circ [-1]
\]
and $\coPrim[1]$ to denote
\[
	\coPrim[1] = [1] \circ \BarO_{\ComAlg} \simeq \BarO_{\ComAlg[-1]} \circ [1].
\]

\section{Turning Koszul duality into an equivalence}
The goal of this section is to prove Theorem~\ref{thm:intro:Koszul_duality_connectivity_on_Ran}. We will start with Theorem~\ref{thm:equivalence_koszul_vec_case}, which examines the special case where $X$ is just a point, i.e. $\Shv(\Ran X) \simeq \Shv(X) \simeq \Vect$, and prove that Koszul duality induces a natural equivalence of categories
\[
	\Chev :\Lie(\Vect^{\leq -1}) \simeq \ComCoAlg(\Vect^{\leq -2}): \Prim[-1].
\]

Note that this is a classical result of Quillen~\cite{quillen_rational_1969}, and our proof could be viewed as a recast of his under the light of higher algebra. This point of view allows us to generalize the result to the more general case of interest. Note also that this case is not strictly needed in the proof of the general case. We do, however, recommend the reader to first read it before moving on to the proof of Theorem~\ref{thm:Koszul_duality_connectivity_on_Ran} since it contains all the essential points without the complicated notation employed in the general case to deal with the combinatorics of the $\Ran$ space.

\subsection{The case of $\Lie$- and $\ComCoAlg$-algebras inside $\Vect$} \label{subsec:Koszul_equivalence_Lie_ComCoAlg_in_Vect}
We will now prove the following
\begin{thm} \label{thm:equivalence_koszul_vec_case}
	$\Chev$ and $\Prim[-1]$ give rise to a pair of mutually inverse functors
	\[
		\Chev: \Lie(\Vect^{\leq -1}) \rightleftarrows \ComCoAlg(\Vect^{\leq -2}): \Prim[-1]
	\]
\end{thm}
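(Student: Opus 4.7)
The plan is to verify first that $\Chev$ and $\Prim[-1]$ restrict to functors between the indicated subcategories, and then that the unit $\eta\colon \mathfrak{g} \to \Prim[-1]\Chev(\mathfrak{g})$ and counit $\epsilon\colon \Chev\Prim[-1](A) \to A$ of the Koszul adjunction become equivalences after these restrictions. The preservation of subcategories is the easy part: since we work in characteristic zero, $\Sym^n$ preserves connectivity, so if $\mathfrak{g} \in \Vect^{\leq -1}$ then $\mathfrak{g}[1] \in \Vect^{\leq -2}$ and each $\Sym^n(\mathfrak{g}[1]) \in \Vect^{\leq -2n}$. Since $\oblv\Chev(\mathfrak{g})$ is built from these pieces, it lands in $\Vect^{\leq -2}$, and a parallel cobar-complex calculation handles the opposite direction for $\Prim[-1]$.

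The heart of the argument is to exploit the canonical weight gradings. The object $\Chev(\mathfrak{g})$ has weights $\geq 1$ with weight-$n$ piece $\Sym^n(\mathfrak{g}[1])$, and this weight grading propagates through the cobar totalization defining $\Prim[-1]$. The unit map is weight-homogeneous of weight~$1$, so the statement amounts to checking that weight~$1$ of $\Prim[-1]\Chev(\mathfrak{g})$ agrees with $\mathfrak{g}$ itself while all higher weight pieces vanish. The weight~$1$ check is immediate from the definitions. The vanishing in weights $n \geq 2$ is the classical Koszul self-duality between the Lie and cocommutative operads: the relevant weight-$n$ complex is precisely the Koszul resolution, which is acyclic. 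The counit is treated symmetrically.

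The main obstacle is that $\Prim[-1]$ is defined as a totalization, and this limit does not split as a product of its weight pieces in general, so one cannot naively perform the computation weight-by-weight. The connectivity hypotheses are designed precisely to force this commutation: under $\mathfrak{g} \in \Vect^{\leq -1}$, the weight-$n$ contribution to $\Prim[-1]\Chev(\mathfrak{g})$ lies in strictly deeper co-connective degrees as $n$ grows, so in any fixed cohomological degree only finitely many weights contribute, and one genuinely recovers the sum of the weight pieces. The same principle, with the stronger constraint $A \in \Vect^{\leq -2}$, handles the counit; the off-by-one shift between the two connectivity bounds reflects exactly the $[1]$ built into $\Chev$. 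This is the bookkeeping that will have to be tracked carefully in the actual write-up — everything else reduces to the algebraic input about the Koszul complex plus a standard convergence lemma for filtered objects with eventually arbitrarily co-connective associated graded.
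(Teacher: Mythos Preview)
Your strategy contains a real gap: the claimed ``weight grading'' on $\Chev(\mathfrak{g})$ with weight-$n$ piece $\Sym^n(\mathfrak{g}[1])$ does not exist for a general Lie algebra $\mathfrak{g}$. What $\Chev(\mathfrak{g})$ carries is a \emph{filtration} with that associated graded; the extension data are precisely where the Lie bracket lives, and they are nonzero as soon as $\mathfrak{g}$ is non-abelian. Consequently the assertion that ``the unit map is weight-homogeneous of weight~$1$'' and the reduction to checking vanishing weight-by-weight are not well-posed as stated. The argument can be repaired by replacing the grading with the $\addFil$ filtration and arguing on associated graded, but then the substantive step becomes showing that $\Prim[-1]$ commutes with $\oblv_{\Fil}$ (a filtered colimit) under the connectivity hypothesis --- which is exactly the technical content you defer to a ``standard convergence lemma''. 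The counit side is more problematic: ``treated symmetrically'' is too vague, since an arbitrary $A\in\ComCoAlg(\Vect^{\leq -2})$ has no evident filtration with the properties you need, and one must instead run an $\addCoFil$-type argument together with a check that $\Chev$ commutes with the relevant \emph{limit}, which is a separate convergence statement.

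The paper takes a different route that sidesteps the counit entirely: it shows the left adjoint $\Chev$ is fully faithful and the right adjoint $\Prim[-1]$ is conservative. Full faithfulness is obtained not via a filtration argument on $\Chev(\mathfrak{g})$ but by writing any $\mathfrak{g}$ as a sifted colimit of \emph{free} Lie algebras, proving that $\Prim[-1]$ preserves sifted colimits under the connectivity bound (this is where your convergence intuition actually enters, via a cobar-tower estimate showing $\tau_{\geq -n}\coBarP$ stabilizes at a finite stage), and then using $\Chev\circ\Free_{\Lie}\simeq\triv$ to reduce to checking $\Free_{\Lie}\simeq\Prim[-1]\circ\triv$. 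Conservativity of $\Prim[-1]$ is proved by a direct contradiction argument on the same cobar tower. Your underlying connectivity estimate is morally the same as the paper's, but the paper's packaging avoids having to control $\Chev$ applied to a limit, which is where your counit sketch would need real work.
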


\begin{rmk} \label{rmk:vec_case_land_in_correct_category}
Since $\Chev$ is defined as a colimit, it is easy to see that $\Chev|_{\Lie(\Vect^{\leq -1})}$ lands in the correct subcategory cut out by the connectivity assumption $\Vect^{\leq -2}$ (the extra shift to the left is due to~\eqref{eq:Chev_and_Bar}). It is, however, not a priori obvious for $\Prim[-1]$, being defined as a limit. Nonetheless, this fact is a direct consequence of Lemma~\ref{lem:vec_case_stability} and Corollary~\ref{cor:vec_case_from_coBar_to_coBarm}.
\end{rmk}

\begin{rmk}
Unless otherwise specified, when it makes sense our functors will be automatically restricted to the subcategories with the appropriate connectivity conditions. For example, we will write $\Chev$ instead of $\Chev|_{\Lie(\Vect^{\leq -1})}$ in most cases.
\end{rmk}

\begin{rmk}
Note that Theorem~\ref{thm:equivalence_koszul_vec_case} can be proved more generally for a presentable symmetric monoidal stable infinity category with a $t$-structure satisfying some mild properties. The pair of operad and co-operad $\Lie$ and $\ComCoAlg$ could also be made more general. See Remarks~\ref{rmk:general_category} and~\ref{rmk:general_operads}.
\end{rmk}

\subsubsection{}
To prove that $\Chev$ and $\Prim[-1]$ are mutually inverse functors, it suffices to show that the left adjoint functor, $\Chev$, is fully-faithful, and the right adjoint functor, $\Prim[-1]$ is conservative. We start with the following result, whose proof is carried out in \S\ref{subsubsec:proof_lem_Prim[-1]_is_good_Vec_case} after some preparation.

\begin{lem} \label{lem:Prim[-1]_is_good_Vec_case} The functor $\Prim[-1]|_{\ComCoAlg(\Vect^{\leq -2})}$ satisfies the following conditions
	\begin{enumerate}[(i)]
		\item \label{lem:Prim[-1]_is_good_Vec_case:sifted_colims} $\Prim[-1]$ commutes with sifted colimits.
		\item \label{lem:Prim[-1]_is_good_Vec_case:monad_isom} The natural map
		\[
			\Free_{\Lie} \to \Prim[-1]\circ \triv_{\ComCoAlg}
		\]
		is an equivalence.
	\end{enumerate}
\end{lem}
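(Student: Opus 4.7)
The plan is to verify the two parts separately, leveraging the connectivity constraint $\Vect^{\leq -2}$ to turn an \emph{a priori} infinite cosimplicial totalization into a locally finite one.

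For (i), by~\eqref{eq:Prim[-1]_and_coBar} the functor $\Prim[-1]$ agrees up to shift with $\coBarP_{\ComCoAlg}$, computed as the totalization of the canonical cobar cosimplicial diagram. The $n$-th level of this diagram is built from iterates of $\coFree_{\ComCoAlg}$ and hence involves tensor powers $A^{\otimes m}$ with $m \leq n+1$. Since $A \in \Vect^{\leq -2}$ implies $A^{\otimes m} \in \Vect^{\leq -2m}$, the $n$-th cosimplicial term is supported in cohomological degrees tending to $-\infty$ linearly in $n$. Consequently, in any fixed cohomological degree only finitely many cosimplicial levels contribute, so the totalization reduces to a finite limit there. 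Since finite limits in the stable category $\Vect$ commute with all colimits, (i) follows.

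For (ii), the natural transformation $\Free_{\Lie} \to \Prim[-1] \circ \triv_{\ComCoAlg}$ is obtained as the mate under the adjunction $\Chev \dashv \Prim[-1]$ of a canonical map $\Chev \circ \Free_{\Lie} \to \triv_{\ComCoAlg}$ arising from the bar-coaugmentation. To show it is an equivalence when $V \in \Vect^{\leq -2}$, I would compute both sides on underlying objects. The left-hand side is the usual symmetric-sequence formula $\bigoplus_n (\Lie(n) \otimes V^{\otimes n})_{\Sigma_n}$. The right-hand side is a cobar totalization which simplifies drastically because the reduced comultiplication of $\triv_{\ComCoAlg}(V)$ vanishes: only the cofree-coalgebra half of the cobar differential survives. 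Combined with the finiteness argument of (i) and the Koszul-duality identification $\ComCoAlg[1] \simeq \Lie^\vee$ recalled in~\S\ref{subsubsec:prelim_Koszul_coLie_ComAlg}, the totalization degenerates into the same direct sum, and the resulting equivalence is seen to respect the Lie structure by inspecting $\coBarP^\enh$.

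The main obstacle is the bookkeeping in (i): one must check that each cosimplicial term lies deep enough in $\Vect^{\leq *}$ that the totalization truly truncates to a finite limit in every cohomological degree. This is the one point where the precise numerical constant $-2$ (as opposed to merely $\leq -1$, which is the condition on the $\Lie$-side) enters the argument. A subsidiary difficulty in (ii) is verifying that the matching of underlying objects in $\Vect$ in fact respects the Lie-bracket coming from the enhanced comonad $\coBarP^\enh$ rather than merely the augmented object; this should be a formal consequence of tracing the operadic structure through the cobar construction.
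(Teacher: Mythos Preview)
Your overall strategy matches the paper's: both parts reduce to showing that the cobar totalization is, in each fixed cohomological degree, controlled by a finite partial totalization $\coBarP_\matheur{P}^m$, so that one can freely commute it with colimits. The paper carries this out via Lemma~\ref{lem:vec_case_stability} and Corollary~\ref{cor:vec_case_from_coBar_to_coBarm}, and then remarks that~(ii) follows by the same mechanism.

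However, the estimate you give for~(i) has a gap. The $n$-th cosimplicial term
\[
\coBarP^\bullet_\matheur{P}(c)([n]) = \bigoplus_{m\geq 1} \matheur{P}^{\star n}(m) \otimes_{\Sigma_m} c^{\otimes m}
\]
involves \emph{all} $m\geq 1$, not only $m\leq n+1$; in particular the $m=1$ summand is just $c$ itself, which sits in $\Vect^{\leq -2}$ and does not move as $n$ grows. So the $n$-th cosimplicial term is \emph{not} supported in cohomological degrees tending to $-\infty$, and your claimed linear decay fails outright.

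The correct object to estimate is not the $n$-th cosimplicial term but the fiber
\[
F^n(c) = \Fib\bigl(\coBarP_\matheur{P}^n(c) \to \coBarP_\matheur{P}^{n-1}(c)\bigr)
\]
between successive partial totalizations. This fiber is governed by the \emph{non-degenerate} cells at level $n$, and the smallest $m$ contributing a non-degenerate cell is $m=2^n$ (each internal node of an $n$-level tree must branch into at least two). Hence $F^n(c)\in\Vect^{\leq -2^{n+1}+n}$, which is exponential rather than linear decay. This is precisely the content of Lemma~\ref{lem:vec_case_stability}, and it is what makes Corollary~\ref{cor:vec_case_from_coBar_to_coBarm} go through. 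Once you have that corollary, your argument for~(i) and~(ii) proceeds as you indicate.
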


As in~\cite[\S 4.1.8]{francis_chiral_2011}, this immediately implies the following corollary. For the sake of completeness, we include the proof here.
\begin{cor} \label{cor:vect_case_Chev_fully_faithful}
	$\Chev|_{\Lie(\Vect^{\leq -1})}$ is fully faithful.
\end{cor}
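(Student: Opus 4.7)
The plan is to show that the unit of adjunction $\eta_\mathfrak{g}: \mathfrak{g} \to \Prim[-1]\circ \Chev(\mathfrak{g})$ is an equivalence for every $\mathfrak{g} \in \Lie(\Vect^{\leq -1})$, which is equivalent to fully-faithfulness of the left adjoint $\Chev$. I would follow the standard monadic strategy, reducing to free Lie algebras via the bar resolution.

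For the reduction, every $\mathfrak{g} \in \Lie(\Vect^{\leq -1})$ is the geometric realization of its monadic bar resolution
$$\mathfrak{g} \simeq \big| \Free_\Lie \circ (\oblv_\Lie \Free_\Lie)^{\bullet} (\oblv_\Lie \mathfrak{g}) \big|.$$
The functor $\Free_\Lie$ preserves the subcategory $\Vect^{\leq -1}$, since $V^{\otimes n} \in \Vect^{\leq -n} \subseteq \Vect^{\leq -1}$ for $V \in \Vect^{\leq -1}$; hence every term of this simplicial object lies in $\Lie(\Vect^{\leq -1})$. Since $\Chev$ is a left adjoint, it commutes with all colimits, and $\Prim[-1]$ commutes with sifted colimits by part~(i) of Lemma~\ref{lem:Prim[-1]_is_good_Vec_case}; therefore $\eta$ commutes with geometric realizations, and it suffices to verify that $\eta_{\Free_\Lie(V)}$ is an equivalence for every $V \in \Vect^{\leq -1}$.

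For the free case, the key computation is $\Chev \circ \Free_\Lie \simeq \triv_{\ComCoAlg}$ as functors $\Vect \to \ComCoAlg(\Vect)$: this is the standard fact that the operadic bar construction on a free $\Lie$-algebra produces $V[-1]$ equipped with the trivial $\ComCoAlg[1]$-coalgebra structure, which becomes $\triv_{\ComCoAlg}(V)$ after applying the shift built into $\Chev$ (see \S\ref{subsubsec:prelim_Koszul_coLie_ComAlg}). Part~(ii) of the lemma then gives
$$\Prim[-1]\circ \Chev \circ \Free_\Lie(V) \simeq \Prim[-1] \circ \triv_{\ComCoAlg}(V) \simeq \Free_\Lie(V),$$
and a naturality check identifies this composite equivalence with the unit $\eta_{\Free_\Lie(V)}$.

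The main point requiring care is that final naturality check: one must show that the abstract equivalence produced by combining parts~(i) and~(ii) of the lemma is the unit map itself rather than merely an equivalence between its source and target. This reduces to chasing the universal properties of the adjunctions $\Free_\Lie \dashv \oblv_\Lie$ and $\Chev \dashv \Prim[-1]$ through the Koszul duality setup of Section~\ref{subsubsec:prelim_Koszul_coLie_ComAlg}, using that the natural map in part~(ii) of the lemma is by construction the one coming from the unit of $\Chev \dashv \Prim[-1]$ transposed along $\Free_\Lie \dashv \oblv_\Lie$.
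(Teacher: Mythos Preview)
Your proposal is correct and follows essentially the same approach as the paper: reduce the unit $\id \to \Prim[-1]\circ\Chev$ to the case of free Lie algebras using that every $\Lie$-algebra is a sifted colimit of free ones together with part~(i) of Lemma~\ref{lem:Prim[-1]_is_good_Vec_case}, then invoke $\Chev\circ\Free_{\Lie}\simeq\triv_{\ComCoAlg}$ and part~(ii). You are slightly more explicit than the paper about verifying that $\Free_{\Lie}$ preserves $\Vect^{\leq -1}$ and about the final naturality check, but the argument is the same.
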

\begin{proof}
It suffices to show that the unit map
\[
	\id \to \Prim[-1]\circ \Chev
\]
is an equivalence. Since $\Prim[-1]$ commutes with sifted colimits by part~\eqref{lem:Prim[-1]_is_good_Vec_case:sifted_colims} of Lemma~\ref{lem:Prim[-1]_is_good_Vec_case}, it suffices to show that the following is an equivalence
\[
	\Free_{\Lie} \to \Prim[-1]\circ \Chev \circ \Free_{\Lie},
\]
since any $\Lie$-algebra could be written as a sifted colimit of the free ones.\footnote{This fact applies to the category of algebras over any operad in general.}
However, we know that (even without the connectivity condition)
\[
	\Chev\circ \Free_{\Lie} \simeq \triv_{\ComCoAlg}
\]
and hence, it suffices to show that
\[
	\Free_{\Lie} \to \Prim[-1]\circ \triv_{\ComCoAlg}.
\]
But now, we are done due to part~\eqref{lem:Prim[-1]_is_good_Vec_case:monad_isom} of Lemma~\ref{lem:Prim[-1]_is_good_Vec_case}.
\end{proof}

\subsubsection{} Before proving Lemma~\ref{lem:Prim[-1]_is_good_Vec_case}, we start with a couple of preliminary observations. In essence, the lemma is a statement about commuting limits and colimits. In a stable infinity category, if, for instance, the limit is a finite one, then one can always do that. In our situation, $\coBarP$ causes troubles because it is defined as an infinite limit. 

The main idea of the proof is that when
\[
	c\in \ComCoAlg(\Vect^{\leq -2}),
\]
then even though
\[
	\coBarP_\matheur{\ComCoAlg}(c)
\]
is computed as an infinite limit, each of its cohomological degrees will be controlled by only finitely many of terms in the limit.

\subsubsection{} For brevity's sake, we will use $\matheur{P}$ to denote the co-operad $\ComCoAlg$. Recall that in general, for any
\[
	c\in \ComCoAlg(\Vect),
\]
we have
\[
	\coBarP_{\matheur{P}}(c) = \Tot(\coBarP^\bullet_{\matheur{P}}(c))
\]
where $\coBarP^\bullet_\matheur{P}(c)$ is a co-simplicial object.

Let
\[
	\coBarP_{\matheur{P}}^n(c) = \Tot(\coBarP_\matheur{P}^\bullet(c)|_{\Delta^{\leq n}})
\]
be the limit over the restriction of the co-simplicial object to $\Delta^{\leq n}$. Then we have the following tower
\[
	c\simeq \coBarP_\matheur{P}^0(c) \leftarrow \coBarP_\matheur{P}^1(c) \leftarrow \cdots \leftarrow \coBarP_{\matheur{P}}^n(c) \leftarrow \cdots
\]
and
\[
	\coBarP_\matheur{P}(c) \simeq \lim_n \coBarP_\matheur{P}^n(c).
\]

\begin{lem}\label{lem:vec_case_stability}
Let 
\[
	c\in \ComCoAlg(\Vect^{\leq -2}).
\]
Then, for all $n \geq 0$, the following natural map
\[
	\tr_{\geq -2^{n+1}+n+1} \coBarP_\matheur{P}^n(c) \to \tr_{\geq -2^{n+1}+n+1} \coBarP_\matheur{P}^{n-1}(c). \label{eq:vec_case_stability}
\]
is an equivalence.
\end{lem}
\begin{proof}
Let $F^n(c)$ denote the difference between $\coBarP^n_\matheur{P}(c)$ and $\coBarP^{n-1}_\matheur{P}(c)$, 
\[
	F^n(c) = \Fib(\coBarP^n_\matheur{P}(c) \to \coBarP^{n-1}_\matheur{P}(c)).
\]
Then for
\[
	c\in \ComCoAlg(\Vect^{\leq -2}),
\]
we see that
\[
	F^n(c) \in \Vect^{\leq -2\cdot 2^n + n} \simeq \Vect^{\leq -2^{n+1} +n}.
\]
Indeed, this is because of the fact that $c\in \Vect^{\leq -2}$ and hence, in the direct sum
\[
	\coBarP^\bullet_\matheur{P}(c)([n]) = \bigoplus_{m\geq 1} \matheur{P}^{\star n}(m) \otimes_{S_m} c^{\otimes m},
\]
$m=2^n$ is the first summand where we have non-degenerate ``(co-)cells.'' The shift to the right by $n$ is due to the fact that we are at level $n$ of the co-simplicial object.

As a consequence,
\[
	\tr_{\geq -2^{n+1}+n+1} \coBarP_\matheur{P}^n(c) \to \tr_{\geq -2^{n+1}+n+1} \coBarP_\matheur{P}^{n-1}(c)
\]
is an equivalence and we are done.
\end{proof}

\begin{cor} \label{cor:vec_case_from_coBar_to_coBarm}
Let
\[
	c\in \ComCoAlg(\Vect^{\leq -2}).
\]
Then, for any $n$, the following natural map
\[
	\tr_{\geq -n} \coBarP_\matheur{P}(c) \to \tr_{\geq -n} \coBarP_\matheur{P}^m(c)
\]
is an equivalence for all $m \gg 0$, where the bound depends only on $n$.
\end{cor}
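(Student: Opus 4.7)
The plan is to control the fiber of the map $\coBarP_\matheur{P}(c) \to \coBarP_\matheur{P}^m(c)$ in terms of the connectivity estimates supplied by Lemma~\ref{lem:vec_case_stability}, and then show that only finitely many of those estimates are relevant in any fixed cohomological range.

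Let $G_m = \Fib(\coBarP_\matheur{P}(c) \to \coBarP_\matheur{P}^m(c))$. Since $\coBarP_\matheur{P}(c) \simeq \lim_k \coBarP_\matheur{P}^k(c)$, we have
\[
    G_m \simeq \lim_{k \geq m}\, \Fib\bigl(\coBarP_\matheur{P}^k(c) \to \coBarP_\matheur{P}^m(c)\bigr).
\]
The $k$-th term of this tower is an iterated extension of $F^{m+1}(c), F^{m+2}(c), \ldots, F^k(c)$, where $F^j(c) = \Fib(\coBarP_\matheur{P}^j(c) \to \coBarP_\matheur{P}^{j-1}(c))$. By Lemma~\ref{lem:vec_case_stability}, each $F^j(c) \in \Vect^{\leq -2^{j+1}+j}$, and since the bound $-2^{j+1}+j$ is most restrictive for the smallest index $j = m+1$, I conclude that
\[
    \Fib\bigl(\coBarP_\matheur{P}^k(c) \to \coBarP_\matheur{P}^m(c)\bigr) \in \Vect^{\leq -2^{m+2} + m + 1}
\]
for all $k \geq m$.

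Now I take the limit over $k$. As remarked just before the corollary, infinite products preserve $\Vect^{\leq 0}$, and hence also $\Vect^{\leq d}$ for every $d$. Since a tower limit in a stable category is the fiber of a self-map of the product, $\Vect^{\leq d}$ is closed under tower limits. Therefore $G_m \in \Vect^{\leq -2^{m+2} + m + 1}$.

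Given $n$, choose $m$ so large that $-2^{m+2} + m + 1 \leq -n - 1$; this is possible since $2^{m+2}$ grows much faster than $m$, and the required $m$ depends only on $n$. Then $G_m \in \Vect^{\leq -n-1}$, so $\tr_{\geq -n} G_m \simeq 0$, which is equivalent to the asserted equivalence $\tr_{\geq -n}\coBarP_\matheur{P}(c) \to \tr_{\geq -n}\coBarP_\matheur{P}^m(c)$. The only delicate point is the passage in step two from fiber-wise connectivity of the $F^j(c)$'s to connectivity of the limit, and this is handled by the stability of $\Vect^{\leq d}$ under arbitrary products together with the fact that tower limits can be expressed as fibers of maps between such products.
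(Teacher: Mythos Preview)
Your argument is correct and is exactly the unpacking the paper intends: the paper only records that the corollary follows ``directly'' from Lemma~\ref{lem:vec_case_stability} together with the fact that infinite products preserve $\Vect^{\leq 0}$, and your proof makes precisely this deduction explicit by bounding the fiber $G_m$ via the connectivity of the $F^j(c)$'s and the closure of $\Vect^{\leq d}$ under tower limits.
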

\begin{proof}
The lemma follows from the general situation considered below. Suppose we have a sequence $X^0 \leftarrow X^1 \leftarrow \cdots$ and integers $n, m$ such that
\[
	F^i = \Fib(X^i \to X^m) \in \Vect^{\leq -n-2}
\]
for all $i\geq m$. Let $X = \lim X^i$ and note that
\[
	\Fib(X\to X^m) \simeq \Fib(\lim_{i \geq m} X^i \to X^m) \simeq \lim_{i\geq m} \Fib(X^i \to X^m) = \lim_{i\geq m} F^i.
\]

But now, the sequential limit can be computed as the fiber of two infinite products, i.e. we have the following fiber sequence
\[
	\lim_{i\geq m} F^i \to \prod F^i \to \prod F^i.
\]
Since the last two terms belong to $\Vect^{\leq -n-1}$, so is the first term. Therefore, 
\[
	\tr_{\geq -n} X \simeq \tr_{\geq -n} X_m
\]
and the proof concludes.
\end{proof}

\begin{rmk} \label{rmk:vec_case_bounded_cohomological_amplitude_inf_prod}
In the proof above, we use the fact that $\Vect^{\leq 0}$ is preserved under countable products in $\Vect$, or equivalently, that countable products are exact with respect to the usual $t$-structure on $\Vect$.  However, since the estimate appearing in~\eqref{eq:vec_case_stability} tends to $-\infty$, the conclusion of Corollary~\ref{cor:vec_case_from_coBar_to_coBarm} still holds true when countable products are only known to have uniformly bounded cohomological amplitude, i.e. there exists a fixed $N$ such that $\prod_i V_i$ lives in cohomological degrees $\leq N$ for any family $(V_i)_{i\in \mathbb{N}}$ such that $V_i$ lives in cohomological degrees $\leq 0$ for each $i$.
\end{rmk}

\subsubsection{} \label{subsubsec:proof_lem_Prim[-1]_is_good_Vec_case} We will now complete the proof of Lemma~\ref{lem:Prim[-1]_is_good_Vec_case}.

\begin{proof}[Proof of Lemma~\ref{lem:Prim[-1]_is_good_Vec_case}]	
The proof is now simple. In fact, we will only prove part~\eqref{lem:Prim[-1]_is_good_Vec_case:sifted_colims}, as the other one is almost identical. Note that due to~\eqref{eq:Prim[-1]_and_coBar}, what we prove about $\coBarP_\matheur{P}$ implies the corresponding statement of $\Prim[-1]$, up to a shift.

It suffices to show that for all $n$, we have
\[
	\tr_{\geq -n} \coBarP_\matheur{P} (\colim_\alpha c_\alpha) \simeq \tr_{\geq -n} \colim_{\alpha} \coBarP_\matheur{P} (c_\alpha)
\]
where $\alpha$ runs over some sifted diagram. But now, from Corollary~\ref{cor:vec_case_from_coBar_to_coBarm}, for all $m\gg 0$, we have
\begin{align*}
	\tr_{\geq -n} \coBarP_\matheur{P}(\colim_\alpha c_\alpha) 
	&\simeq \tr_{\geq -n} \coBarP_\matheur{P}^m (\colim_\alpha c_\alpha) 
	\simeq \tr_{\geq -n} \colim_\alpha \coBarP_\matheur{P}^m(c_\alpha)	
	\simeq \colim_\alpha \tr_{\geq -n} \coBarP_\matheur{P}^m(c_\alpha) \\
	&\simeq \colim_\alpha \tr_{\geq -n} \coBarP_\matheur{P}(c_\alpha) 
	\simeq \tr_{\geq -n} \colim_\alpha \coBarP_\matheur{P}(c_\alpha).
\end{align*}

\end{proof}

\begin{rmk}
The cohomological estimate done above implies that
\[
	\coBarP_{\ComCoAlg}(c) \in \Lie[-1](\Vect^{\leq -2}),
\]
or equivalently, that
\[
	\Prim[-1](c) \in \Lie(\Vect^{\leq -1}),
\]
when
\[
	c\in \ComCoAlg(\Vect^{\leq -2}).
\]

Indeed, from Corollary~\ref{cor:vec_case_from_coBar_to_coBarm}, we know that for some $m\gg 0$, 
\[
	\tr_{\geq -1} \coBarP_{\matheur{P}}(c) \simeq \tr_{\geq -1} \coBarP_{\matheur{P}}^m(c),
\]
and moreover, a downward induction using Lemma~\ref{lem:vec_case_stability} shows that
\[
	\tr_{\geq -1} \coBarP_{\matheur{P}}^m(c) \simeq \tr_{\geq -1} \coBarP^0_\matheur{P}(c) \simeq \tr_{\geq -1} c \simeq 0.
\]
\end{rmk}

\subsubsection{} The following result concludes the proof of Theorem~\ref{thm:equivalence_koszul_vec_case}.

\begin{lem} \label{lem:vec_case_Prim[-1]_conservative}
	The functor
	\[
		\Prim[-1]: \ComCoAlg(\Vect^{\leq -2}) \to \Lie(\Vect^{\leq -1})
	\]
	is conservative.
\end{lem}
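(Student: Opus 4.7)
The plan is to reduce conservativity of $\Prim[-1]$ to a cohomological-degree induction on the cobar tower, reusing the estimates behind Lemma~\ref{lem:vec_case_stability}. Since $\oblv_{\Lie}$ is conservative and $\oblv_{\Lie} \circ \Prim[-1] \simeq \coBarP_{\ComCoAlg}[-1]$ as functors to $\Vect$, the claim reduces to the following: for a morphism $f \colon c_1 \to c_2$ in $\ComCoAlg(\Vect^{\leq -2})$, if $\coBarP_{\matheur{P}}(f)$ is an equivalence in $\Vect$ (writing $\matheur{P} = \ComCoAlg$, as before), then $f$ itself is an equivalence in $\Vect$, and hence in $\ComCoAlg(\Vect^{\leq -2})$ by conservativity of $\oblv_{\ComCoAlg}$.

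I would then prove this by induction on $k \geq 0$ that $\tau_{\geq -k}(f)$ is an equivalence; the cases $k = 0, 1$ are automatic since $c_i \in \Vect^{\leq -2}$. For the inductive step, fix $k$, and by Corollary~\ref{cor:vec_case_from_coBar_to_coBarm} choose $n \gg 0$ so that $\tau_{\geq -k}\coBarP_{\matheur{P}}(c_i) \simeq \tau_{\geq -k}\coBarP_{\matheur{P}}^n(c_i)$; then $\tau_{\geq -k}\coBarP_{\matheur{P}}^n(f)$ is an equivalence. I then descend the finite tower $\coBarP_{\matheur{P}}^n \to \coBarP_{\matheur{P}}^{n-1} \to \cdots \to \coBarP_{\matheur{P}}^0 = \id$ via the fiber sequences $F^m \to \coBarP_{\matheur{P}}^m \to \coBarP_{\matheur{P}}^{m-1}$, applying $\tau_{\geq -k}$ throughout and verifying at each step that $\tau_{\geq -k} F^m(f)$ is an equivalence; iterating to $m = 0$ then yields the desired equivalence $\tau_{\geq -k}(f)$.

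The key point supporting the descent is that, as indicated in the proof of Lemma~\ref{lem:vec_case_stability}, the leading contribution to $F^m(c)$ is the $c^{\otimes 2^m}$-summand of the cobar complex, so $F^m(f)$ is, up to higher filtration, computed from tensor powers of $f$; together with the outer inductive hypothesis (which gives $\Fib(f) \in \Vect^{\leq -k}$) and the fact that $c_i \in \Vect^{\leq -2}$ forces tensor powers to acquire additional coconnectivity, one obtains that $\Fib(F^m(f))$ is sufficiently coconnective for $\tau_{\geq -k}F^m(f)$ to be an equivalence. The chief obstacle is the combinatorial bookkeeping required to keep the numerics consistent across this double induction: the external induction on $k$ supplies information about $f$ only in degrees $> -k$, while the internal descent of the cobar tower draws on tensor powers of $f$ whose coconnectivity must be bounded by the outer hypothesis. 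This is the same numerological game as in Lemma~\ref{lem:vec_case_stability}, performed now for morphisms rather than objects; once the estimates are in place the argument is formal.
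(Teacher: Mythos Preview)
Your proposal is correct and follows essentially the same route as the paper: both reduce to showing $\coBarP_{\matheur{P}}(f)$ an equivalence implies $f$ an equivalence, then use Corollary~\ref{cor:vec_case_from_coBar_to_coBarm} to pass to a finite stage of the cobar tower and descend via the fiber sequences $F^m \to \coBarP_\matheur{P}^m \to \coBarP_\matheur{P}^{m-1}$, with the key numerical input being that $\Fib(f) \in \Vect^{\leq -k}$ together with $c_i \in \Vect^{\leq -2}$ forces $\Fib(F^m(f))$ into degrees $< -k$. The only cosmetic difference is that the paper argues by contradiction on the minimal $k$ for which $\tau_{\geq -k}(f)$ fails, while you frame it as a direct induction on $k$; these are logically equivalent packagings of the same estimate.
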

\begin{proof}
It suffices to show that
\[
	\coBarP_\matheur{P}: \ComCoAlg(\Vect^{\leq -2}) \to \Lie[-1](\Vect^{\leq -2})
\]
is conservative, and we will prove that by contradiction. Namely, let
\[
	f: c_1\to c_2
\]
be a morphism in $\ComCoAlg(\Vect^{\leq -2})$ such that $f$ is not an equivalence. Suppose that
\[
	\coBarP_\matheur{P}(f): \coBarP_\matheur{P}(c_1) \to \coBarP_\matheur{P}(c_2)
\]
is an equivalence, we will derive a contradiction.

Let $k$ be the smallest number such that 
\[
	\tr_{\geq -k}(f): \tr_{\geq -k} c_1 \to \tr_{\geq -k} c_2
\]
is not an equivalence. Now, by Corollary~\ref{cor:vec_case_from_coBar_to_coBarm}, we know that there is some $m\gg 0$ such that
\[
	\tr_{\geq -k} \coBarP_\matheur{P}(c_i)  \simeq \tr_{\geq -k} \coBarP_\matheur{P}^m(c_i)
\]
for $i\in \{1, 2\}$. Thus, we know that
\[
	\tr_{\geq -k} \coBarP_\matheur{P}^m(c_1) \to \tr_{\geq -k} \coBarP_\matheur{P}^m(c_2)
\]
is an equivalence.

By an estimate similar to the one at Lemma~\ref{lem:vec_case_stability}, we will show that 
\[
	\tr_{\geq -k} F^n(c_1) \simeq \tr_{\geq -k} F^n(c_2)
\]
for all $n \geq 1$, where $F^*(-)$ denotes the fiber as in the proof of Lemma~\ref{lem:vec_case_stability}.  Indeed, the difference between $F^n(c_1)$ and $F^n(c_2)$ lies in cohomological degrees
\[
	\leq -2(2^n-1) - k + n = -2^{n+1} - k + n + 2 < -k, \quad \forall n \geq 1.
\]

And hence, a downward induction, starting from $n=m$, using the diagram
\[
\xymatrix{
	F^n(c_1) \ar[d] \ar[r] & \coBarP_\matheur{P}^n(c_1) \ar[d] \ar[r] & \coBarP_\matheur{P}^{n-1}(c_1) \ar[d] \\
	F^n(c_2) \ar[r] & \coBarP_\matheur{P}^n(c_2) \ar[r] & \coBarP_\matheur{P}^{n-1}(c_2) 
}
\]
implies that
\[
	\tau_{\geq -k} c_1 \simeq \tau_{\geq -k} c_2,
\]
which contradicts our original assumption. Hence, we are done.
\end{proof}

\begin{rmk}\label{rmk:general_category}
	Note that the proof we gave above could be carried out in a more general setting. Namely, the only properties of $\Vect$ that we used are
	\begin{enumerate}[(i)]
		\item The symmetric monoidal structure is right exact (namely, it preserved $\Vect^{\leq 0}$).
		\item The $t$-structure on $\Vect$ is left separated.
		\item Countable products have uniformly bounded cohomological amplitude (see Remark~\ref{rmk:vec_case_bounded_cohomological_amplitude_inf_prod}).
	\end{enumerate}
\end{rmk}
\begin{rmk}\label{rmk:general_operads}
	We can also replace the operad $\Lie$ by any operad $\matheur{O}$ such that 
	\begin{enumerate}[(i)]
		\item $\matheur{O}$ is classical, i.e. it lies in the heart of the $t$-structure of $\Vect$.
		\item $\matheur{O}^\vee[-1]$ is also classical.
		\item $\matheur{O}(1) \simeq \Lambda$ (as we already assume throughout this paper).
	\end{enumerate}
\end{rmk}

\subsection{Higher enveloping algebras}
We will briefly explain the topological analogue of the main results in the factorizable setting, proved in the next subsection. In this setting, the result is an immediate consequence of what we already proved above.

The main reference of this part is~\cite{gaitsgory_study_2017}.

\subsubsection{} Let
\[
	\mathfrak{g} \in \Lie(\Vect).
\]
Then one can form its $E_n$-universal enveloping algebra
\[
	U_{E_n}(\mathfrak{g}) \in E_n(\Vect)
\]
by applying the following sequence of functors
\[
\xymatrix{
	\Lie(\Vect) \ar[rr]^<<<<<<<<<<{\Omega^{\times n} \simeq [-n]} && E_n(\Lie(\Vect)) \ar[rr]^<<<<<<<<<<{E_n(\Chev)} && E_n(\ComCoAlg(\Vect)) \ar[rr]^<<<<<<<<<<{\oblv_{\ComCoAlg}} && E_n(\Vect)
}
\]
where $E_n(\Lie(\Vect))$ and $E_n(\ComCoAlg(\Vect))$ are categories of $E_n$-algebras with respect to the Cartesian monoidal structure on $\Lie(\Vect)$ and $\ComCoAlg(\Vect)$ respectively (note that the latter one is just the given by $\otimes$ in $\Vect$).

\subsubsection{} It is proved in~\cite{gaitsgory_study_2017} that $[-n]$ induces an equivalence
\[
	[-n]: \Lie(\Vect) \simeq E_n(\Lie(\Vect)) : [n].
\]
Moreover, we know from Theorem~\ref{thm:equivalence_koszul_vec_case} that
\[
	E_n(\Chev): E_n(\Lie(\Vect^{\leq -1})) \to E_n(\ComCoAlg(\Vect^{\leq -2})).
\]
As a result, we get
\begin{prop}
We have the following equivalence of categories
\[
	\Lie(\Vect^{\leq -n-1}) \simeq E_n(\ComCoAlg(\Vect^{\leq -2})). \teq \label{eq:higher_enveloping_alg_equivalence}
\]
\end{prop}

\subsubsection{} The equivalence~\eqref{eq:higher_enveloping_alg_equivalence} is precisely what we are looking for in the context of factorization algebras on the Ran space in the following subsection. One part of the work is to find connectivity assumptions on $\Shv(\Ran X)$ which mirror those appearing in $\Vect^{\leq -n-1}$ and $\Vect^{\leq -2}$ respectively.

\subsection{The case of $\Liestar$- and $\ComCoAlgstar$-algebras on $\Ran X$} \label{subsec:koszul_equivalence_Lie_ComCoAlg_Ran_case}
We now come to the precise formulation and the proof of Theorem~\ref{thm:intro:Koszul_duality_connectivity_on_Ran}. Throughout this subsection, we will assume that $X$ is smooth over $k$ of dimension $d$.

\begin{defn} \label{defn:connectivity_constraints_Lie_ComCoAlg_Ran}
	Let $\Shv(\Ran X)^{\leq c_{cA}}$ and $\Shv(\Ran X)^{\leq c_L}$ denote the full subcategory of $\Shv(\Ran X)$ consisting of sheaves $\matheur{F}$ such that for all non-empty finite sets $I$,
	\[
		\matheur{F}|_{\oversetsupscript{X}{\circ}{I}} \in \Shv(\oversetsupscript{X}{\circ}{I})^{\leq (-1-d)|I| - 1},
	\]
	and respectively,
	\[
		\matheur{F}|_{\oversetsupscript{X}{\circ}{I}} \in \Shv(\oversetsupscript{X}{\circ}{I})^{\leq (-1-d)|I|}.
	\]
	Here, we use the perverse $t$-structure, and $X$ is a scheme of pure dimension $d$.
\end{defn}

\begin{notation}
	We will use
	\[
		\Liestar(\Ran X)^{\leq c_L} \qquad\text{and}\qquad \ComCoAlgstar(\Ran X)^{\leq c_{cA}}
	\]
	to denote
	\[
		\Liestar(\Shv(\Ran X)^{\leq c_L}) \qquad\text{and}\qquad \ComCoAlgstar(\Ran X)^{\leq c_{cA}}
	\]
	respectively.
\end{notation}

With these connectivity assumptions in mind, the rest of this subsection will be devoted to the proof of the following
\begin{thm} \label{thm:Koszul_duality_connectivity_on_Ran}
Suppose $X$ is smooth over $k$ of dimension $d$. We have the following commutative diagram
\[
\xymatrix{
	\Liestar(\Ran X)^{\leq c_L} \ar@{=}[rr]^<<<<<<<<<<{\Chev}_<<<<<<<<<<{\Prim[-1]} && \ComCoAlgstar(\Ran X)^{\leq c_{cA}} \\
	\Liestar(X)^{\leq c_L} \ar@{^(->}[u] \ar@{=}[rr]^{\Chev}_{\Prim[-1]} && \coFactstar(X)^{\leq c_{cA}} \ar@{^(->}[u]
} \teq \label{eq:diagram_in_thm:Koszul_duality_connectivity_on_Ran}
\]
where $\leq c_L$ and $\leq c_{cA}$ denote the connectivity constraints given in Definition~\ref{defn:connectivity_constraints_Lie_ComCoAlg_Ran}, and where $\Chev$ and $\Prim[-1]$ are the functors coming from Koszul duality.
\end{thm}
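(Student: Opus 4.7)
The plan is to follow the template of Theorem~\ref{thm:equivalence_koszul_vec_case} (the $\Vect$ case), adapting its arguments stratum by stratum on $\Ran X$. As in that case, the top row of~\eqref{eq:diagram_in_thm:Koszul_duality_connectivity_on_Ran} reduces to establishing (a) that $\Prim[-1]|_{\ComCoAlgstar(\Ran X)^{\leq c_{cA}}}$ commutes with sifted colimits and satisfies the monad isomorphism $\Free_{\Liestar}\simeq\Prim[-1]\circ\triv_{\ComCoAlgstar}$, which (by the same argument as Corollary~\ref{cor:vect_case_Chev_fully_faithful}) will imply that $\Chev$ is fully faithful; and (b) that $\Prim[-1]$, with the same restriction, is conservative. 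Together, (a) and (b) will yield that $\Chev$ and $\Prim[-1]$ are mutually inverse.

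Both (a) and (b) will rest on a stability estimate analogous to Lemma~\ref{lem:vec_case_stability}. Fixing a non-empty finite set $I$ and a sheaf $\matheur{A}\in\ComCoAlgstar(\Ran X)^{\leq c_{cA}}$, I would analyze the cobar tower $\{\coBarP^n_{\ComCoAlg}(\matheur{A})\}_n$ after restriction to the stratum $\oversetsupscript{X}{\circ}{I}$. By the explicit $\otimesstar$-formula~\eqref{eq:otimesstar_explicit_formula}, each $\matheur{A}^{\otimesstar m}|_{\oversetsupscript{X}{\circ}{I}}$ decomposes as a direct sum over covers $I=\bigcup_{j=1}^m I_j$, where each summand is the $\Delta^!$-pullback of a box product of copies of $\matheur{A}$ along the diagonal $X^I\hookrightarrow\prod_j X^{I_j}$ of codimension $c=d(\sum_j|I_j|-|I|)$. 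Combining the $c_{cA}$ hypothesis with the additivity of the perverse $t$-structure under $\boxtimes$ on smooth varieties and the bound that $\Delta^!$ shifts perverse degree up by at most $2c$, one obtains an explicit upper bound on the perverse degree of each summand. The crucial observation, exactly as in Lemma~\ref{lem:vec_case_stability}, is that the fiber $F^n(\matheur{A})=\Fib(\coBarP^n_{\ComCoAlg}(\matheur{A})\to\coBarP^{n-1}_{\ComCoAlg}(\matheur{A}))$ picks up its first non-degenerate cells only from $m\geq 2^n$; balancing the codimension shift $2d(m'-|I|)$ against the budget $(-1-d)m'-m$ granted by $c_{cA}$ (with $m'=\sum_j|I_j|$) should drive the perverse degree of $F^n(\matheur{A})|_{\oversetsupscript{X}{\circ}{I}}$ to $-\infty$ as $n\to\infty$, for each fixed $I$. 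The numerics of $c_L$ and $c_{cA}$ in Definition~\ref{defn:connectivity_constraints_Lie_ComCoAlg_Ran} are chosen precisely to make this combinatorial balance go through.

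With the stratum-wise stability in hand, the analogue of Corollary~\ref{cor:vec_case_from_coBar_to_coBarm} will follow formally: on every $\oversetsupscript{X}{\circ}{I}$, any given perverse-degree truncation of $\coBarP_{\ComCoAlg}(\matheur{A})$ is computed by a finite partial totalization of the cosimplicial object, using that infinite products preserve $\Shv^{\leq 0}$. The proofs of Lemma~\ref{lem:Prim[-1]_is_good_Vec_case} and Lemma~\ref{lem:vec_case_Prim[-1]_conservative} then transport verbatim stratum by stratum, establishing the top row of~\eqref{eq:diagram_in_thm:Koszul_duality_connectivity_on_Ran}. For the bottom row, two further checks are needed. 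First, $\Chev$ sends $\Liestar(X)^{\leq c_L}$ into $\coFactstar(X)$: this is a standard consequence of~\eqref{eq:otimesstar_explicit_formula} on the disjoint locus together with the compatibility of $\Chev$ with $\otimesstar$ there (cf.~\cite{francis_chiral_2011}), which makes the factorization map~\eqref{eq:commutative_cofac_map} for $\Chev(\mathfrak{g})$ an equivalence. Second, $\Prim[-1]$ sends $\coFactstar(X)^{\leq c_{cA}}$ into $\Liestar(\Ran X)$-objects supported on the diagonal $X\hookrightarrow\Ran X$; this follows automatically from the top-row equivalence once it is known, since $\Prim[-1]$ must then invert $\Chev|_{\Liestar(X)^{\leq c_L}}$ on its essential image.

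The principal obstacle will be the stability estimate itself. In contrast to the $\Vect$ setting, the $\Delta^!$-shifts arising from non-disjoint covers contribute positively to perverse degrees, and they must be offset by the single-degree gap between $c_L$ and $c_{cA}$ together with the exponential growth $m\geq 2^n$ of tensor factors in $F^n$. Setting up this combinatorial accounting carefully is the technical heart of the proof; once this is done, the remainder of the argument will be essentially formal and mirror Section~\ref{subsec:Koszul_equivalence_Lie_ComCoAlg_in_Vect}.
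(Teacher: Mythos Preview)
Your overall strategy matches the paper's exactly: establish a stratum-wise stability estimate for the coBar tower restricted to each $\oversetsupscript{X}{\circ}{I}$, deduce from it that $\Prim[-1]$ commutes with sifted colimits and is conservative (hence the top row), and then handle the bottom row via Francis--Gaitsgory. Two points, however, need correction.

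First, the perverse cohomological amplitude of $\Delta^!$ along a regular closed embedding of codimension $c$ is $c$, not $2c$ (this is what the paper uses in the proof of Lemma~\ref{lem:otimesstar_preserves_cL_ccA}). With your bound $2d(m'-|I|)$, the estimate for a summand of $(\matheur{A}^{\otimesstar m})|_{\oversetsupscript{X}{\circ}{I}}$ becomes
\[
	(-1-d)m' - m + 2d(m'-|I|) + n = (d-1)m' - m - 2d|I| + n,
\]
which for $d\geq 2$ need not tend to $-\infty$ as $n\to\infty$ (one can take $m' \gg m$). With the correct shift $d(m'-|I|)$ one gets $-m' - m - d|I| + n \leq -2^{n+1} - d|I| + n$, which is the estimate the paper records in Lemma~\ref{lem:ran_case_stability}.

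Second, your argument for the bottom row is circular. Knowing that $\Prim[-1]$ inverts $\Chev$ on the essential image of $\Chev|_{\Liestar(X)^{\leq c_L}}$ does not tell you that this essential image is all of $\coFactstar(X)^{\leq c_{cA}}$; that is exactly the content of the bottom equivalence. The paper supplies the missing direction by proving the contrapositive: if $\mathfrak{g}\in\Liestar(\Ran X)^{\leq c_L}$ is \emph{not} supported on the diagonal $X$, then $\Chev\mathfrak{g}$ is \emph{not} factorizable. This is done via the $\assgr\circ\addFil$ trick (Appendix~\ref{sec:appendix:addFil_trick}), which reduces to the case where $\mathfrak{g}$ is a trivial Lie algebra; there $\Chev\mathfrak{g}\simeq\Sym^{>0}(\mathfrak{g}[1])$, and one checks directly that factorizability fails on the smallest stratum $\oversetsupscript{X}{\circ}{I}$ (with $|I|>1$) where $\mathfrak{g}$ has nonzero restriction.
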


\begin{rmk} \label{rmk:Ran_case_supress_conn_cond}
As in the case of $\Vect$, we will in general suppress the distinction between a functor and its restriction to a subcategory cut out by some connectivity condition. For example, we will write $\Chev$ instead of $\Chev|_{\Liestar(\Ran X)^{\leq c_L}}$ unless confusion is likely to occur.
\end{rmk}

\begin{rmk}
As in Remark~\ref{rmk:vec_case_land_in_correct_category}, it is straightforward to see that $\Chev$ restricts to the correct subcategories. For $\Prim[-1]$, it is a direct consequence of Lemma~\ref{lem:ran_case_stability} and Corollary~\ref{cor:Ran_case_from_coBar_to_coBarm}.
\end{rmk}

\begin{rmk}
As in the case of $\Vect$, the operad/co-operad pair $\Lie$ and $\ComCoAlg$ could be replaced by a pair of Koszul dual operad/co-operad $\matheur{O}$ and $\matheur{O}^\vee$ satisfying the conditions listed in Remark~\ref{rmk:general_operads}.\footnote{Note that for a general operad $\matheur{O}$, only the first row of~\eqref{eq:diagram_in_thm:Koszul_duality_connectivity_on_Ran} makes sense.}
\end{rmk}

We start with a preliminary lemma, which ensures that the categories
\[
	\Liestar(\Ran X)^{\leq c_L} \qquad\text{and}\qquad \ComCoAlgstar(\Ran X)^{\leq c_{cA}}
\]
are actually well-defined.

\begin{lem} \label{lem:otimesstar_preserves_cL_ccA}
Suppose $X$ is smooth over $k$ of dimension $d$. Then the subcategories $\Shv(\Ran X)^{\leq c_L}$ and $\Shv(\Ran X)^{\leq c_{cA}}$ are preserved under the $\otimesstar$-monoidal structure on $\Shv(\Ran X)$.
\end{lem}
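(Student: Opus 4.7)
The plan is to reduce the preservation claim to a stratum-by-stratum perverse-degree estimate on each open $\oversetsupscript{X}{\circ}{I}$, via the explicit formula~\eqref{eq:otimesstar_explicit_formula}. Since the two connectivity conditions are both defined by upper bounds on the perverse cohomological degree of the restrictions $\matheur{F}|_{\oversetsupscript{X}{\circ}{I}}$, it is enough, for each nonempty finite set $I$, to control the perverse degree of $(\matheur{F}_1 \otimesstar \matheur{F}_2)|_{\oversetsupscript{X}{\circ}{I}}$ in terms of the given bounds on the $\matheur{F}_i|_{\oversetsupscript{X}{\circ}{I_i}}$.

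The first step, and the only one with any real geometric content, is to observe that in the direct-sum decomposition~\eqref{eq:otimesstar_explicit_formula}, only the \emph{disjoint} decompositions $I = I_1 \sqcup I_2$ yield a nonzero summand. Indeed, if $I_1 \cap I_2 \neq \emptyset$, then $\Delta_{I_1 \sqcup I_2 \surjects I_1 \cup I_2}$ forces the coordinates indexed by the overlap to coincide across the two factors, so its image lies in the closed complement of $(\prod \oversetsupscript{X}{\circ}{I_i})_{\disj}$, and the $!$-pullback of a sheaf supported on that open is zero. For a disjoint decomposition the set-map $I_1\sqcup I_2\surjects I_1\cup I_2$ is a bijection, so $\Delta$ is an isomorphism of smooth schemes and $\Delta^!$ is an equivalence, in particular $t$-exact for the perverse $t$-structure.

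The second step is to invoke the standard additivity of the perverse $t$-structure under external product on smooth schemes: if $\matheur{G}_i \in \Shv(Y_i)^{\leq a_i}$ with $Y_i$ smooth, then $\matheur{G}_1 \boxtimes \matheur{G}_2 \in \Shv(Y_1 \times Y_2)^{\leq a_1 + a_2}$. Applying this to each surviving summand, with $a_i = (-1-d)|I_i|$ in the $c_L$ case and $a_i = (-1-d)|I_i| - 1$ in the $c_{cA}$ case, and using $|I_1| + |I_2| = |I|$ on disjoint decompositions, produces the bounds $(-1-d)|I|$ and $(-1-d)|I| - 2 \leq (-1-d)|I| - 1$, respectively. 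Finite direct sums are $t$-exact, so the stratum-wise estimate passes through to the full $(\matheur{F}_1 \otimesstar \matheur{F}_2)|_{\oversetsupscript{X}{\circ}{I}}$, and the desired containment follows.

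The step that requires any care is the disjointness observation in the first step; without it one would have to worry about the codimension shift that $\Delta^!$ would a priori introduce for overlapping decompositions, which would spoil the estimate in dimensions $d \geq 2$. Once this vanishing is understood, the remainder of the argument is entirely routine bookkeeping with the perverse $t$-structure, and runs uniformly in the dimension $d$ and the cardinality $|I|$.
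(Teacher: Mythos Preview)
Your first step contains a genuine error: the claim that only disjoint decompositions $I = I_1 \sqcup I_2$ contribute to the direct sum in~\eqref{eq:otimesstar_explicit_formula} is false. The quickest counterexample is $|I| = 1$: the unique decomposition is $I_1 = I_2 = I$, which has nonempty overlap, yet $(\matheur{F}_1 \otimesstar \matheur{F}_2)|_X \simeq \matheur{F}_1|_X \otimesshriek \matheur{F}_2|_X$ is certainly nonzero in general. More generally, over a point $\{x_1,\dots,x_n\} \in \oversetsupscript{X}{\circ}{I}$ the fiber of $\union$ consists of \emph{all} $k$-tuples of nonempty subsets with union $\{x_1,\dots,x_n\}$, not just the partitions. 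The misreading is in the target of $\Delta$: the relevant open is $\prod_i \oversetsupscript{X}{\circ}{I_i}$ (distinct coordinates \emph{within} each factor), into which $\Delta\colon \oversetsupscript{X}{\circ}{I} \hookrightarrow \prod_i \oversetsupscript{X}{\circ}{I_i}$ lands as a regular closed embedding of codimension $d(\sum_i |I_i| - |I|)$, not the fully disjoint locus $\oversetsupscript{X}{\circ}{\sqcup I_i}$.

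The paper's proof keeps all decompositions and estimates each summand directly: the box product lies in perverse degree $\leq (-1-d)\sum_i |I_i|$ by additivity, and $\Delta^!$ along a regular embedding raises perverse degree by at most the codimension $d(\sum_i|I_i|-|I|)$, giving
\[
(-1-d)\sum_i|I_i| + d\Bigl(\sum_i|I_i|-|I|\Bigr) = -\sum_i|I_i| - d|I| \leq (-1-d)|I|,
\]
since $\sum_i |I_i| \geq |I|$; the $c_{cA}$ case adds an extra $-k \leq -1$. So contrary to your closing remark, the codimension shift does \emph{not} spoil the estimate in any dimension: the factor $(-1-d)$ in the connectivity bound is exactly calibrated so that the extra connectivity $-d$ per repeated index absorbs the shift. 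Once you keep the overlapping terms and run this estimate, the rest of your bookkeeping (additivity of $\boxtimes$ for the perverse $t$-structure, $t$-exactness of finite direct sums) is fine.
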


\begin{proof}
Recall from~\eqref{eq:otimesstar_explicit_formula} that if
\[
	\matheur{F}_1, \dots, \matheur{F}_k \in \Shv(\Ran X),
\]
then from the definition of $\otimesstar$, we have
\[
	(\matheur{F}_1 \otimesstar \cdots \otimesstar \matheur{F}_k)|_{\oversetsupscript{X}{\circ}{I}} \simeq \bigoplus_{I = \cup_{i=1}^k I_i} \Delta^!_{\sqcup_{i=1}^k I_i \surjects \cup_{i=1}^k I_i} (\matheur{F}_1 \boxtimes\cdots\boxtimes \matheur{F}_k)|_{\oversetsupscript{X}{\circ}{I}}. \teq \label{eq:otimesstar_as_a_sum}
\]
	
Now, suppose that
\[
	\matheur{F}_1, \dots, \matheur{F}_k \in \Shv(\Ran X)^{\leq c_L},
\]
then we see that each summand in~\eqref{eq:otimesstar_as_a_sum} lies in perverse cohomological degrees
\begin{align*}
	&\leq  (-1-d)\sum_{i=1}^k |I_i| + d\left(\sum_{i=1}^k |I_i| - |I|\right) \\
	&\leq -\sum_{i=1}^k |I_i| - d|I|  \\
	&\leq (-1-d)|I|.
\end{align*}
Here, the first inequality is due to the fact that the map
\[
	\oversetsupscript{X}{\circ}{I} \to \prod_{i=1}^k \oversetsupscript{X}{\circ}{I_i}
\]
is a regular embedding (since $X$ is smooth), and that the (perverse) cohomological amplitude of the $!$-pullback along a regular embedding is equal to the codimension. The sequence of inequalities above thus implies that
\[
	\matheur{F}_1\otimesstar\cdots\otimesstar\matheur{F}_k \in \Shv(\Ran X)^{\leq c_L}.
\]

Similarly, suppose that
\[
	\matheur{F}_1, \dots, \matheur{F}_k \in \Shv(\Ran X)^{\leq c_{cA}},
\]
then each summand in~\eqref{eq:otimesstar_as_a_sum} lies in perverse cohomological degrees
\begin{align*}
	&\leq (-1-d)\sum_{i=1}^k |I_i| - k + d\left(\sum_{i=1}^k |I_i| - |I|\right) \teq \label{eq:cohomological_estimates_cA_otimesstar} \\
	&\leq -\sum_{i=1}^k |I_i| - k - d|I| \\
	&\leq (-1-d)|I| - 1.
\end{align*}
Thus,
\[
	\matheur{F}_1 \otimesstar \cdots \otimesstar \matheur{F}_k \in \Shv(\Ran X)^{\leq c_{cA}},
\]
which concludes the proof.
\end{proof}

\subsubsection{} Back to Theorem~\ref{thm:Koszul_duality_connectivity_on_Ran}. First, we will prove the equivalence on the top row of~\eqref{eq:diagram_in_thm:Koszul_duality_connectivity_on_Ran}. Then, we will show that it induces an equivalence between the corresponding sub-categories on the bottom row.

As in the case of $\Vect$, to prove that $\Chev$ and $\Prim[-1]$ are mutually inverse functors, it suffices to show that $\Chev$ is fully-faithful, and $\Prim[-1]$ is conservative. As above, we start with the following lemma, whose proof, after some preparation, will conclude in \S\ref{subsubsec:proof_lem_Prim[-1]_is_good_Ran_case}.

\begin{lem} \label{lem:Prim[-1]_is_good_Ran_case}
	The functor $\Prim[-1]|_{\ComCoAlgstar(\Ran X)^{\leq c_{cA}}}$ satisfies the following conditions (see Remark~\ref{rmk:Ran_case_supress_conn_cond})
	\begin{enumerate}[(i)]
		\item $\Prim[-1]$ commutes with sifted colimits.
		\item The natural map
		\[
			\Free_{\Lie} \to \Prim[-1]\circ \triv_{\ComCoAlg}
		\]
		is an equivalence.
	\end{enumerate}
\end{lem}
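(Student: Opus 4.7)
The plan is to mirror the strategy of Lemma~\ref{lem:Prim[-1]_is_good_Vec_case} term by term: the role of the bound $\Vect^{\leq -2}$ will be played by $\Shv(\Ran X)^{\leq c_{cA}}$, and all cohomological estimates will be tracked separately on each stratum $\oversetsupscript{X}{\circ}{I}$. The key input is a pointwise stability estimate showing that, in any fixed cohomological range and on any fixed stratum, the infinite totalization defining $\coBarP_\matheur{P}(c)$ (with $\matheur{P} = \ComCoAlg$, as before) agrees with a finite-stage truncation.

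First I would extract a sharpened form of Lemma~\ref{lem:otimesstar_preserves_cL_ccA}: its intermediate inequality~\eqref{eq:cohomological_estimates_cA_otimesstar} actually yields, for $\matheur{F}_1, \dots, \matheur{F}_k \in \Shv(\Ran X)^{\leq c_{cA}}$, the bound
\[
(\matheur{F}_1 \otimesstar \cdots \otimesstar \matheur{F}_k)|_{\oversetsupscript{X}{\circ}{I}} \in \Shv(\oversetsupscript{X}{\circ}{I})^{\leq -(1+d)|I| - k},
\]
mirroring the fact that a $k$-fold tensor power of an object of $\Vect^{\leq -2}$ lies in $\Vect^{\leq -2k}$. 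Plugging this into the cosimplicial analysis of Lemma~\ref{lem:vec_case_stability}, and using that the non-degenerate contribution at cosimplicial level $n$ comes from the $2^n$-fold $\otimesstar$-power of $c$, I obtain for $F^n(c) := \Fib(\coBarP_\matheur{P}^n(c) \to \coBarP_\matheur{P}^{n-1}(c))$ the estimate
\[
F^n(c)|_{\oversetsupscript{X}{\circ}{I}} \in \Shv(\oversetsupscript{X}{\circ}{I})^{\leq -(1+d)|I| - 2^n + n}.
\]
Since infinite products preserve the coconnective part of the perverse $t$-structure on each stratum, this yields the Ran analogue of Corollary~\ref{cor:vec_case_from_coBar_to_coBarm}: for every $I$ and every $N$ there is an $m_0 = m_0(|I|, N)$ such that the map $\tr_{\geq N}\, \coBarP_\matheur{P}(c)|_{\oversetsupscript{X}{\circ}{I}} \to \tr_{\geq N}\, \coBarP_\matheur{P}^m(c)|_{\oversetsupscript{X}{\circ}{I}}$ is an equivalence for all $m \geq m_0$.

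With this stability in hand, both parts of the lemma follow by the same formal mechanism as in the $\Vect$ case. Equivalences in $\Shv(\Ran X)$ can be detected after restriction to each $\oversetsupscript{X}{\circ}{I}$ followed by all truncations $\tr_{\geq N}$, and on any such finite-stage truncation $\coBarP_\matheur{P}^m$ is a finite limit in a stable $\infty$-category, hence commutes with sifted colimits; this proves part~(i). For part~(ii), the same stratum-by-stratum finite-stage reduction replaces the natural map $\Free_\Lie \to \Prim[-1] \circ \triv_{\ComCoAlg}$ by its image under $\tr_{\geq N}$ on each $\oversetsupscript{X}{\circ}{I}$, where it reduces to a finite computation that is insensitive to the connectivity constraint (and therefore holds exactly as in the proof of Lemma~\ref{lem:Prim[-1]_is_good_Vec_case}).

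The main obstacle I anticipate is the bookkeeping of the $|I|$-dependence in the cohomological bounds: the connectivity condition $c_{cA}$ is non-uniform across strata, so one has to verify that the $-2^n + n$ drop in the bound on $F^n(c)$ really is independent of $|I|$. Fortunately, the sharper tensor estimate in the first step additively separates the $|I|$-contribution from the $k$-contribution, so for each fixed $I$ the cosimplicial-stabilization argument of Lemmas~\ref{lem:vec_case_stability} and~\ref{lem:Prim[-1]_is_good_Vec_case} applies verbatim.
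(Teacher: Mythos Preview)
Your proposal is correct and follows essentially the same approach as the paper: the paper likewise reduces to stratum-by-stratum estimates on $F^n(\matheur{A})|_{\oversetsupscript{X}{\circ}{I}}$ (Lemma~\ref{lem:ran_case_stability}) and the resulting finite-stage stabilization (Corollary~\ref{cor:Ran_case_from_coBar_to_coBarm}), then concludes exactly as in the $\Vect$ case. The only cosmetic difference is in the numerical bound: the paper uses $\sum_{i=1}^{2^n} |I_i| \geq 2^n$ to obtain $F^n(\matheur{A})|_{\oversetsupscript{X}{\circ}{I}} \in \Shv(\oversetsupscript{X}{\circ}{I})^{\leq -2^{n+1} - d|I| + n}$, whereas you use $\sum |I_i| \geq |I|$ to obtain the bound $\leq -(1+d)|I| - 2^n + n$; both are valid consequences of~\eqref{eq:cohomological_estimates_cA_otimesstar} and both tend to $-\infty$ in $n$, so either suffices.
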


As in Corollary~\ref{cor:vect_case_Chev_fully_faithful}, this immediately implies the following
\begin{cor} \label{cor:Ran_case_Chev_fully_faithful}
	$\Chev|_{\Liestar(\Ran X)^{\leq c_L}}$ is fully faithful.
\end{cor}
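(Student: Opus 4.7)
The plan is to mimic verbatim the strategy used in the proof of Corollary~\ref{cor:vect_case_Chev_fully_faithful}, now replacing the role of Lemma~\ref{lem:Prim[-1]_is_good_Vec_case} by its Ran-space analog, Lemma~\ref{lem:Prim[-1]_is_good_Ran_case}. Namely, to show that $\Chev|_{\Liestar(\Ran X)^{\leq c_L}}$ is fully faithful, it suffices to prove that the unit map of the adjunction
\[
    \id \to \Prim[-1]\circ \Chev
\]
is an equivalence on $\Liestar(\Ran X)^{\leq c_L}$.

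The key reduction is that every object of $\Liestar(\Ran X)^{\leq c_L}$ can be written as a sifted colimit of free Lie algebras on objects of $\Shv(\Ran X)^{\leq c_L}$ (this is a general fact about algebras over an operad). Using part (i) of Lemma~\ref{lem:Prim[-1]_is_good_Ran_case}, namely that $\Prim[-1]$ commutes with sifted colimits on $\ComCoAlgstar(\Ran X)^{\leq c_{cA}}$, together with the fact that $\Chev$, being a left adjoint, commutes with all colimits, the problem reduces to verifying that
\[
    \Free_{\Lie} \to \Prim[-1]\circ \Chev \circ \Free_{\Lie}
\]
is an equivalence. By the standard (and unconditional) identification
\[
    \Chev\circ \Free_{\Lie} \simeq \triv_{\ComCoAlg},
\]
this is in turn reduced to showing that the natural map
\[
    \Free_{\Lie} \to \Prim[-1]\circ \triv_{\ComCoAlg}
\]
is an equivalence, which is precisely part (ii) of Lemma~\ref{lem:Prim[-1]_is_good_Ran_case}.

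The main conceptual point to verify for the argument to make sense is a bookkeeping one: we need to know that along this reduction all intermediate objects live in the correct connectivity subcategories so that Lemma~\ref{lem:Prim[-1]_is_good_Ran_case} applies. For $\Chev$, being computed as a colimit of $\otimesstar$-powers, the fact that $\Shv(\Ran X)^{\leq c_L}$ is preserved by $\otimesstar$ (Lemma~\ref{lem:otimesstar_preserves_cL_ccA}) together with the cohomological estimate~\eqref{eq:cohomological_estimates_cA_otimesstar} gives that $\Chev$ sends $\Liestar(\Ran X)^{\leq c_L}$ into $\ComCoAlgstar(\Ran X)^{\leq c_{cA}}$; similarly for sifted colimits. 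Thus the main obstacle is really the two-part Lemma~\ref{lem:Prim[-1]_is_good_Ran_case}, whose proof will require controlling the infinite totalization defining $\coBarP_{\ComCoAlg}$ by the same kind of cohomological-degree stabilization argument as in Lemma~\ref{lem:vec_case_stability} and Corollary~\ref{cor:vec_case_from_coBar_to_coBarm}, but now tracking perverse degrees on each stratum $\oversetsupscript{X}{\circ}{I}$ of $\Ran X$. Granting that lemma, the formal adjunction argument above is immediate.
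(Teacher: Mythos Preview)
Your proposal is correct and follows essentially the same approach as the paper, which simply refers back to the proof of Corollary~\ref{cor:vect_case_Chev_fully_faithful} with Lemma~\ref{lem:Prim[-1]_is_good_Ran_case} substituted for Lemma~\ref{lem:Prim[-1]_is_good_Vec_case}. Your additional paragraph verifying that the intermediate objects land in the correct connectivity subcategories is a welcome piece of bookkeeping that the paper leaves implicit.
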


\subsubsection{} In essence, the strategy we follow here is identical to that of the $\Vect$ case even though the actual execution might seem somewhat more involved. The main observation (which is new compared to the case of $\Vect$) is that to prove the equivalences involved in Lemma~\ref{lem:Prim[-1]_is_good_Ran_case}, it suffices to prove them after after pulling back to $\oversetsupscript{X}{\circ}{I}$ for each non-empty finite set $I$.

\subsubsection{} In general, for any
\[
	\matheur{A} \in \ComCoAlgstar(\Ran X)^{\leq c_{cA}},
\]
we have
\[
	\coBarP_{\ComCoAlg}(\matheur{A}) = \Tot(\coBarP^\bullet_{\ComCoAlg}(\matheur{A})),
\]
where $\coBarP^\bullet_{\ComCoAlg}(\matheur{A})$ is a co-simplicial object.

Let
\[
	\coBarP^n_{\ComCoAlg}(\matheur{A}) = \Tot(\coBarP^\bullet_{\ComCoAlg}(\matheur{A})|_{\Delta^{\leq n}}).
\]
Then, we have the following tower
\[
	\matheur{A} \simeq \coBarP^0_{\ComCoAlg}(\matheur{A}) \leftarrow \coBarP^1_{\ComCoAlg}(\matheur{A}) \leftarrow \cdots 
\]
and
\[
	\coBarP_{\ComCoAlg}(\matheur{A}) \simeq \lim_n \coBarP^n_{\ComCoAlg}(\matheur{A}).
\]

\subsubsection{} Let
\[
	F^n(\matheur{A}) = \Fib(\coBarP^n_{\ComCoAlg}(\matheur{A}) \to \coBarP^{n-1}_{\ComCoAlg}(\matheur{A})),
\]
and $I$ a non-empty finite set. Using the same argument as in the case of $\Vect$ in combination with the cohomological estimate~\eqref{eq:cohomological_estimates_cA_otimesstar}, we see that $F^n(\matheur{A})|_{\oversetsupscript{X}{\circ}{I}}$ lives in cohomological degrees
\begin{align*}
	&\leq (-1-d)\sum_{i=1}^{2^n} |I_i| - 2^n + d\left(\sum_{i=1}^{2^n} |I_i| -|I|\right) + n \\
	&= -\sum_{i=1}^{2^n} |I_i| - 2^n - d|I| + n \\
	&\leq -2^{n+1} - d|I| + n
\end{align*}
which goes to $-\infty$ when $n \to \infty$.

This gives us the following analog of Lemma~\ref{lem:vec_case_stability}.
\begin{lem} \label{lem:ran_case_stability}
Let
\[
	\matheur{A} \in \ComCoAlgstar(\Ran X)^{\leq c_{cA}}.
\]
Then, for any $n$ and $I$, the following natural map
\[
	\tr_{\geq -2^{n+1} - d|I| + n + 1} (\coBarP^n_{\ComCoAlg}(\matheur{A})|_{\oversetsupscript{X}{\circ}{I}}) \to \tr_{\geq -2^{n+1} - d|I| + n + 1} (\coBarP^{n-1}_{\ComCoAlg}(\matheur{A})|_{\oversetsupscript{X}{\circ}{I}})
\]
is an equivalence.
\end{lem}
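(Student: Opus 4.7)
The plan is to reduce the claim to the cohomological estimate already carried out in the paragraph immediately preceding the lemma, via a standard truncation argument.

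First, I would assemble the fiber sequence
\[
F^n(\matheur{A})|_{\oversetsupscript{X}{\circ}{I}} \longrightarrow \coBarP^n_{\ComCoAlg}(\matheur{A})|_{\oversetsupscript{X}{\circ}{I}} \longrightarrow \coBarP^{n-1}_{\ComCoAlg}(\matheur{A})|_{\oversetsupscript{X}{\circ}{I}}
\]
obtained by restricting the defining fiber sequence of $F^n(\matheur{A})$ along $\oversetsupscript{X}{\circ}{I} \hookrightarrow \Ran X$ (which is legitimate since the $!$-restriction preserves fiber sequences). From the long exact sequence of perverse cohomology, the map on the right becomes an equivalence after applying $\tr_{\geq k}$ as soon as $F^n(\matheur{A})|_{\oversetsupscript{X}{\circ}{I}}$ is concentrated in perverse degrees $\leq k - 1$.

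Second, the required estimate on $F^n(\matheur{A})|_{\oversetsupscript{X}{\circ}{I}}$ is exactly the one already sketched between Lemma~\ref{lem:otimesstar_preserves_cL_ccA} and the statement of the present lemma. Namely, the non-degenerate ``(co-)cells'' contributing to $F^n$ first appear in cosimplicial degree $n$ and correspond to $m = 2^n$ factors in the $\otimesstar$-tensor power of $\matheur{A}$, with an additional $+n$ shift produced by restricting the totalization to $\Delta^{\leq n}$. Combining this with the cohomological estimate~\eqref{eq:cohomological_estimates_cA_otimesstar} applied to $\matheur{A}^{\otimesstar 2^n}|_{\oversetsupscript{X}{\circ}{I}}$ and using the bound $\sum_{i=1}^{2^n} |I_i| \geq 2^n$ (since each $|I_i| \geq 1$) yields
\[
F^n(\matheur{A})|_{\oversetsupscript{X}{\circ}{I}} \in \Shv(\oversetsupscript{X}{\circ}{I})^{\leq -2^{n+1} - d|I| + n}.
\]
Setting $k = -2^{n+1} - d|I| + n + 1$ then gives the lemma immediately.

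The only step that requires real care is the bookkeeping of the two simultaneous shifts: the $(-1-d)$-type contribution from the $\otimesstar$-estimate on each stratum of $\Ran X$, and the $+n$ coming from $\Delta^{\leq n}$. The former is exactly Lemma~\ref{lem:otimesstar_preserves_cL_ccA} and the latter is the direct analogue of what happens in Lemma~\ref{lem:vec_case_stability} over a point, so beyond matching these two estimates no essentially new ingredient is required.
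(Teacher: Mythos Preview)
Your proposal is correct and coincides with the paper's own argument: the paper computes the same cohomological bound on $F^n(\matheur{A})|_{\oversetsupscript{X}{\circ}{I}}$ in the paragraph immediately preceding the lemma (using~\eqref{eq:cohomological_estimates_cA_otimesstar} together with $\sum_{i=1}^{2^n}|I_i|\geq 2^n$ and the $+n$ shift from $\Delta^{\leq n}$), and then states the lemma as its direct consequence. Your write-up simply unpacks the fiber-sequence-plus-truncation step a bit more explicitly than the paper does.
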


This implies the following result, which is parallel to Corollary~\ref{cor:vec_case_from_coBar_to_coBarm}. See also Remark~\ref{rmk:vec_case_bounded_cohomological_amplitude_inf_prod}, \cite[Lemma~3.2.1]{liu_enhanced_2014} and the discussion after it where left-completeness and uniformly bounded cohomological amplitude for countable products are discussed.
\begin{cor} \label{cor:Ran_case_from_coBar_to_coBarm}
Let
\[
	\matheur{A} \in \ComCoAlgstar(\Ran X)^{\leq c_{cA}}.
\]
Then, for any $n$ and $I$, the following natural map
\[
	\tr_{\geq -n} (\coBarP_{\ComCoAlg}(\matheur{A})|_{\oversetsupscript{X}{\circ}{I}}) \to \tr_{\geq -n} (\coBarP_{\ComCoAlg}^m(\matheur{A})|_{\oversetsupscript{X}{\circ}{I}})
\]
is an equivalence, when $m \gg 0$ depending only on $n$ and $I$.
\end{cor}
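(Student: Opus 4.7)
The plan is to mirror the deduction of Corollary~\ref{cor:vec_case_from_coBar_to_coBarm} from Lemma~\ref{lem:vec_case_stability}, but carried out pointwise on each open stratum $\oversetsupscript{X}{\circ}{I}$ so that Lemma~\ref{lem:ran_case_stability} can be applied directly. The underlying principle is the same as in the $\Vect$ case: although $\coBarP_{\ComCoAlg}(\matheur{A})|_{\oversetsupscript{X}{\circ}{I}}$ is an infinite tower limit, each perverse cohomological degree of it depends on only finitely many terms of the tower.

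Concretely, I would first observe that $!$-restriction to the open subscheme $\oversetsupscript{X}{\circ}{I}$ preserves limits (being a right adjoint; in fact $j^!\simeq j^*$ by~\eqref{eq:etale_!_*_pullback_equiv} for an open embedding), so
\[
	\coBarP_{\ComCoAlg}(\matheur{A})|_{\oversetsupscript{X}{\circ}{I}} \simeq \lim_k \coBarP_{\ComCoAlg}^k(\matheur{A})|_{\oversetsupscript{X}{\circ}{I}}.
\]
Hence the fiber of the canonical map $\coBarP_{\ComCoAlg}(\matheur{A})|_{\oversetsupscript{X}{\circ}{I}} \to \coBarP_{\ComCoAlg}^m(\matheur{A})|_{\oversetsupscript{X}{\circ}{I}}$ is the limit of the tail of the tower past stage $m$, whose successive fibers are $F^k(\matheur{A})|_{\oversetsupscript{X}{\circ}{I}}$ for $k > m$. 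By the cohomological estimate preceding Lemma~\ref{lem:ran_case_stability}, each of these successive fibers lies in perverse cohomological degrees $\leq -2^{k+1} - d|I| + k$, a bound that tends to $-\infty$ as $k \to \infty$.

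Given $n$, I would then choose $m$ large enough so that $-2^{k+1} - d|I| + k \leq -n - 1$ for every $k > m$; this choice depends only on $n$ and $|I|$. At this point I invoke the analogue for $\Shv(\oversetsupscript{X}{\circ}{I})$ (with the perverse $t$-structure) of the property of $\Vect$ highlighted after Corollary~\ref{cor:vec_case_from_coBar_to_coBarm}, namely that infinite products preserve the subcategory $\Shv^{\leq 0}$. Since a tower limit is the fiber of a map between two such products, the fiber of $\coBarP_{\ComCoAlg}(\matheur{A})|_{\oversetsupscript{X}{\circ}{I}} \to \coBarP_{\ComCoAlg}^m(\matheur{A})|_{\oversetsupscript{X}{\circ}{I}}$ sits in perverse degrees $\leq -n$, hence is killed by $\tr_{\geq -n}$, yielding the desired equivalence.

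The only point beyond the $\Vect$ case that requires verification is the preservation of coconnectivity by infinite products in $\Shv(\oversetsupscript{X}{\circ}{I})$, and I expect this to be the main (though mild) obstacle; once it is in hand, the rest of the argument is purely formal and strictly parallel to the $\Vect$ case.
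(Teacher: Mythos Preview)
Your approach is exactly what the paper intends: the paper gives no explicit argument here, only the remark that it is ``parallel to Corollary~\ref{cor:vec_case_from_coBar_to_coBarm}'', which in turn was deduced from Lemma~\ref{lem:vec_case_stability} using that infinite products preserve $\Vect^{\leq 0}$. Your unpacking of this---restricting to $\oversetsupscript{X}{\circ}{I}$ (using that $(-)^!$ commutes with limits), invoking the estimate on $F^k(\matheur{A})|_{\oversetsupscript{X}{\circ}{I}}$, and writing the tail of the tower as the fiber of a map of infinite products---is the right expansion.

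There is, however, an off-by-one slip in the last sentence. An object in perverse degrees $\leq -n$ is \emph{not} killed by $\tr_{\geq -n}$; you need it in degrees $\leq -n-1$. Since the fiber of a map between two infinite products, each lying in $\Shv(\oversetsupscript{X}{\circ}{I})^{\leq -n-1}$, is only guaranteed to land in $\Shv(\oversetsupscript{X}{\circ}{I})^{\leq -n}$, your bound $-2^{k+1}-d|I|+k \leq -n-1$ is one short. The fix is immediate: choose $m$ so that $-2^{k+1}-d|I|+k \leq -n-2$ for all $k>m$ (this still depends only on $n$ and $|I|$); then the tower-limit fiber lies in $\Shv(\oversetsupscript{X}{\circ}{I})^{\leq -n-1}$ and is genuinely annihilated by $\tr_{\geq -n}$. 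Your flagged ``mild obstacle''---that infinite products preserve $\Shv(\oversetsupscript{X}{\circ}{I})^{\leq 0}$ for the perverse $t$-structure---is indeed the one nontrivial input, and it is exactly the property the paper singles out (see the remark following Lemma~\ref{lem:vec_case_Prim[-1]_conservative}).
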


\subsubsection{Concluding the proof of Lemma~\ref{lem:Prim[-1]_is_good_Ran_case}} \label{subsubsec:proof_lem_Prim[-1]_is_good_Ran_case}
As in the proof of Lemma~\ref{lem:Prim[-1]_is_good_Vec_case}, Lemma~\ref{lem:Prim[-1]_is_good_Ran_case} is now a direct consequence of Lemma~\ref{lem:ran_case_stability} and Corollary~\ref{cor:Ran_case_from_coBar_to_coBarm}.

\begin{rmk}
	Note that when $X$ is a point, namely when $d = \dim X = 0$, the cohomological estimates in Lemma~\ref{lem:ran_case_stability} recover those of Lemma~\ref{lem:vec_case_stability}. 
\end{rmk}

To finish with the top equivalence in~\eqref{eq:diagram_in_thm:Koszul_duality_connectivity_on_Ran}, we need the following
\begin{lem} \label{lem:Ran_case_Prim[-1]_conservative}
The functor
\[
	\Prim[-1]: \ComCoAlgstar(\Ran X)^{\leq c_{cA}} \to \Liestar(\Ran X)^{\leq c_L}
\]
is conservative.
\end{lem}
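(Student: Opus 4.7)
The plan is to mimic the strategy of Lemma~\ref{lem:vec_case_Prim[-1]_conservative} from the $\Vect$ case, adapting the cohomological bookkeeping so that it is done separately over each stratum $\oversetsupscript{X}{\circ}{I}$. As in~\eqref{eq:Prim[-1]_and_coBar}, it suffices to show that
\[
	\coBarP_{\ComCoAlg}: \ComCoAlgstar(\Ran X)^{\leq c_{cA}} \to \Liestar(\Ran X)[-1]
\]
is conservative, since $\Prim[-1]$ differs from $\coBarP_{\ComCoAlg}$ only by a shift. We will argue by contradiction: suppose $f: \matheur{A}_1 \to \matheur{A}_2$ is a morphism in $\ComCoAlgstar(\Ran X)^{\leq c_{cA}}$ such that $\coBarP_{\ComCoAlg}(f)$ is an equivalence, yet $f$ is not. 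Since equivalences in $\Shv(\Ran X)$ can be tested after restriction to each $\oversetsupscript{X}{\circ}{I}$, there exists a non-empty finite set $I$ and a smallest integer $k$ for which $\tr_{\geq -k}(f|_{\oversetsupscript{X}{\circ}{I}})$ fails to be an equivalence.

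Next, I invoke Corollary~\ref{cor:Ran_case_from_coBar_to_coBarm}: for this particular $k$ and $I$, there exists $m \gg 0$ such that for both $i \in \{1,2\}$,
\[
	\tr_{\geq -k}(\coBarP_{\ComCoAlg}(\matheur{A}_i)|_{\oversetsupscript{X}{\circ}{I}}) \simeq \tr_{\geq -k}(\coBarP_{\ComCoAlg}^m(\matheur{A}_i)|_{\oversetsupscript{X}{\circ}{I}}).
\]
Thus the hypothesis implies that $\tr_{\geq -k}(\coBarP_{\ComCoAlg}^m(f)|_{\oversetsupscript{X}{\circ}{I}})$ is an equivalence for this fixed $m$. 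The key step is then to run a downward induction on $n$, starting at $n = m$ and ending at $n = 0$, using the fiber sequences
\[
\xymatrix{
	F^n(\matheur{A}_1)|_{\oversetsupscript{X}{\circ}{I}} \ar[d]_{F^n(f)} \ar[r] & \coBarP_{\ComCoAlg}^n(\matheur{A}_1)|_{\oversetsupscript{X}{\circ}{I}} \ar[d] \ar[r] & \coBarP_{\ComCoAlg}^{n-1}(\matheur{A}_1)|_{\oversetsupscript{X}{\circ}{I}} \ar[d] \\
	F^n(\matheur{A}_2)|_{\oversetsupscript{X}{\circ}{I}} \ar[r] & \coBarP_{\ComCoAlg}^n(\matheur{A}_2)|_{\oversetsupscript{X}{\circ}{I}} \ar[r] & \coBarP_{\ComCoAlg}^{n-1}(\matheur{A}_2)|_{\oversetsupscript{X}{\circ}{I}}
}
\]
for each $n \geq 1$. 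At each step, I need to know that $\tr_{\geq -k}(F^n(f)|_{\oversetsupscript{X}{\circ}{I}})$ is an equivalence, which will allow us to descend from the level $n$ equivalence to the level $n-1$ equivalence (in $\tr_{\geq -k}$).

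The main technical point, and where most of the numerical work lies, is the estimate on $F^n(\matheur{A}_i)|_{\oversetsupscript{X}{\circ}{I}}$. Precisely the same cohomological computation that entered~\eqref{eq:cohomological_estimates_cA_otimesstar} and Lemma~\ref{lem:ran_case_stability} shows that $F^n(\matheur{A}_i)|_{\oversetsupscript{X}{\circ}{I}}$ lies in perverse cohomological degrees
\[
	\leq -2^{n+1} - d|I| + n,
\]
which for every $n \geq 1$ is strictly less than $-k$ (once we have fixed $k$ and $I$, and taken $m$ large enough to begin with). Hence $\tr_{\geq -k}(F^n(f)|_{\oversetsupscript{X}{\circ}{I}})$ is automatically an equivalence (both sides vanish), and the induction goes through. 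Concluding at $n = 0$, we obtain $\tr_{\geq -k}(f|_{\oversetsupscript{X}{\circ}{I}})$ is an equivalence, contradicting the choice of $k$ and finishing the proof.

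The hard part is not logical but careful: one must ensure that the bound $-2^{n+1} - d|I| + n < -k$ really holds for all $n \geq 1$ simultaneously (so that the entire downward induction is valid). Since the function $n \mapsto -2^{n+1} + n$ is decreasing in $n$ for $n \geq 1$, it attains its maximum at $n = 1$, giving the single sufficient inequality $-4 - d|I| + 1 < -k$, i.e. $k < 3 + d|I|$. For larger $k$ one simply enlarges $m$; the induction and the estimate are independent and both scale correctly, so no additional obstacle appears beyond the bookkeeping already performed in the proof of Lemma~\ref{lem:ran_case_stability}.
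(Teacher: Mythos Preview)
Your argument has a genuine gap at the step where you claim $\tr_{\geq -k}(F^n(f)|_{\oversetsupscript{X}{\circ}{I}})$ is an equivalence for all $n \geq 1$. You justify this by asserting that each $F^n(\matheur{A}_i)|_{\oversetsupscript{X}{\circ}{I}}$ lies in degrees $\leq -2^{n+1} - d|I| + n$, hence vanishes after $\tr_{\geq -k}$. But that requires $k < 2^{n+1} + d|I| - n$ for every $n\geq 1$, and the worst case $n=1$ gives $k < 3 + d|I|$. There is no reason $k$ should satisfy this: $\matheur{A}_1$ and $\matheur{A}_2$ could agree over $\oversetsupscript{X}{\circ}{I}$ down to any prescribed degree and differ only far below. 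Your proposed remedy, ``for larger $k$ one simply enlarges $m$,'' does not help: $m$ is the \emph{starting point} of the downward induction, but the induction must still pass through $n=1$, where the bound is fixed and independent of $m$.

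The paper repairs this with two ingredients you omitted. First, it chooses $I$ of \emph{minimal} cardinality among those strata where $f$ fails; this forces $\Fib(f)|_{\oversetsupscript{X}{\circ}{J}} \simeq 0$ for every non-empty $J$ with $|J| < |I|$. Second, it estimates the fiber of $F^n(f)|_{\oversetsupscript{X}{\circ}{I}}$ directly, rather than each $F^n(\matheur{A}_i)$ separately. In the $\otimesstar$-expansion of the difference, every nonzero summand contains a tensor factor $\Fib(f)|_{\oversetsupscript{X}{\circ}{I_j}}$; by minimality of $I$ this factor is nonzero only when $I_j = I$, and there it contributes $(-1-d)|I|-1-k$, while the remaining $2^n-1$ factors contribute via the $c_{cA}$ bound. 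The resulting estimate carries a $-k$ term and gives a strict inequality against $(-1-d)|I|-1-k$ for all $n\geq 1$, independent of the size of $k$. This is precisely the Ran-space analogue of the bound $-2(2^n-1) - k + n < -k$ in Lemma~\ref{lem:vec_case_Prim[-1]_conservative}, which you cited but did not actually reproduce: there too the paper estimates the \emph{difference}, not each $F^n(c_i)$ individually.
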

\begin{proof}
It suffices to show that
\[
	\coBarP_{\ComCoAlg}: \ComCoAlgstar(\Ran X)^{\leq c_{cA}} \to \Liestar[-1](\Ran X)^{\leq c_{cA}}
\]
is conservative, and we will do so by contradiction. Namely, let
\[
	f: \matheur{A}_1 \to \matheur{A}_2
\]
be a morphism in $\ComCoAlgstar(\Ran X)^{\leq c_{cA}}$ that is not an equivalence. Suppose that
\[
	\coBarP_{\ComCoAlg}(f): \coBarP_{\ComCoAlg}(\matheur{A}_1) \to \coBarP_{\ComCoAlg}(\matheur{A}_2)
\]
is an equivalence, we will derive a contradiction.

Let $I$ the set of smallest cardinality	 such that the map
\[
	f|_{\oversetsupscript{X}{\circ}{I}}: \matheur{A}_1|_{\oversetsupscript{X}{\circ}{I}} \to \matheur{A}_2|_{\oversetsupscript{X}{\circ}{I}}
\]
is not an equivalence. Let $k \geq 0$  be the smallest number such that
\[
	\tr_{\geq (-1-d)|I|-1-k}(\matheur{A}_1|_{\oversetsupscript{X}{\circ}{I}})  \to \tr_{\geq (-1-d)|I|-1-k} (\matheur{A}_2|_{\oversetsupscript{X}{\circ}{I}})
\]
is not an equivalence.

By Corollary~\ref{cor:Ran_case_from_coBar_to_coBarm}, we know that there exists some $m\gg 0$ such that
\[
	\tr_{\geq (-1-d)|I|-1-k} (\coBarP_{\ComCoAlg}(\matheur{A}_i)|_{\oversetsupscript{X}{\circ}{I}}) \simeq \tr_{\geq (-1-d)|I|-1-k} (\coBarP^m_{\ComCoAlg}(\matheur{A}_i)|_{\oversetsupscript{X}{\circ}{I}})
\]
for $i \in \{1, 2\}$. Thus, we get the following equivalence
\[
	\tr_{\geq (-1-d)|I|-1-k}(\coBarP^m_{\ComCoAlg}(\matheur{A}_1)|_{\oversetsupscript{X}{\circ}{I}}) \simeq \tr_{\geq (-1-d)|I|-1-k} (\coBarP^m_{\ComCoAlg}(\matheur{A}_2)|_{\oversetsupscript{X}{\circ}{I}}).
\]
But observe that if we let
\[
	F^n(\matheur{A}_i) = \Fib(\coBarP_{\ComCoAlg}^n(\matheur{A}_i) \to \coBarP_{\ComCoAlg}^{n-1}(\matheur{A}_i))
\]
then the difference between $F^n(\matheur{A}_1)|_{\oversetsupscript{X}{\circ}{I}}$ and $F^n(\matheur{A}_2)|_{\oversetsupscript{X}{\circ}{I}}$ lies in cohomological degrees
\begin{align*}
	&\leq (-1-d)|I| - 1 - k + (-1-d)\sum_{i=1}^{2^n-1} |I_i| - (2^n - 1) + d\left(|I| + \sum_{i=1}^{2^n - 1}|I_i| - |I|\right) + n \\
	&\leq (-1-d)|I| - 1 -k - \sum_{i=1}^{2^n-1} |I_i| - 2^n +1 + n \\
	&< (-1-d)|I| - 1 - k.
\end{align*}
This implies that for $n\geq 1$,
\[
	\tr_{\geq (-1-d)|I|-1-k} (F^n(\matheur{A}_1)|_{\oversetsupscript{X}{\circ}{I}}) \simeq \tr_{\geq (-1-d)|I|-1-k} (F^n(\matheur{A}_2)|_{\oversetsupscript{X}{\circ}{I}}).
\]
Thus, as in the case of $\Vect$, a downward induction implies that
\[
	\tr_{\geq (-1-d)|I|-1-k}(\matheur{A}_1|_{\oversetsupscript{X}{\circ}{I}}) \simeq \tr_{\geq (-1-d)|I|-1-k}(\matheur{A}_2|_{\oversetsupscript{X}{\circ}{I}}),
\]
which contradicts our original assumption, and we are done.
\end{proof}

\subsubsection{} Corollary~\ref{cor:Ran_case_Chev_fully_faithful} and Lemma~\ref{lem:Ran_case_Prim[-1]_conservative} together prove the equivalence on the top row of diagram~\eqref{eq:diagram_in_thm:Koszul_duality_connectivity_on_Ran}. It remains to show the equivalence in the bottom row, for which it suffices to show that for any
\[
	\mathfrak{g} \in \Liestar(\Ran X)^{\leq c_L},
\]
$\Chev(\mathfrak{g})$ is factorizable if and only if $\mathfrak{g} \in \Liestar(X)^{\leq c_L}$.

\subsubsection{}
For the ``if'' direction, recall that as a consequence of~\cite[Thm. 6.4.2 and 5.2.1]{francis_chiral_2011}, we know that the functor
\[
	\Chev: \Liestar(X) \to \ComCoAlgstar(\Ran X)
\]
lands inside the full-subcategory $\coFactstar(X)$ of factorizable co-algebras. We thus get a functor
\[
	\Chev: \Liestar(X)^{\leq c_L} \to \coFactstar(X)^{\leq c_{cA}},
\]
which settles the ``if'' direction.

\subsubsection{}
For the ``only if'' direction, let
\[
	\mathfrak{g} \in \Liestar(\Ran X)^{\leq c_L}
\]
whose support does not lie in $X$. We will show that $\Chev\mathfrak{g}$ is not factorizable.

Using the $\assgr\circ \addFil$ trick (see \S\ref{sec:appendix:addFil_trick}), it suffices to prove for the case where $\mathfrak{g}$ is a trivial (i.e. abelian) Lie algebra. In that case, we know that
\[
	\Chev \mathfrak{g} = \Sym^{>0}(\mathfrak{g}[1]),
\]
where $\Sym$ is taken using the $\otimesstar$-monoidal structure.

Let $I$ be the smallest set, with $|I| > 1$, such that $\mathfrak{g}|_{\oversetsupscript{X}{\circ}{I}} \not\simeq 0$. Now, it's easy to see that $\Sym^{>0}(\mathfrak{g}[1])$ fails the factorizability condition at $\oversetsupscript{X}{\circ}{I}$, which concludes the ``only if'' direction.

\section{Factorizability of $\coChev$}

In this section, we will prove Theorem~\ref{thm:intro:factorizability_coChev}, which asserts that when $\mathfrak{g} \in \coLiestar(X)$ satisfies a certain co-connectivity constraint, the commutative algebra
\[
	\coChev(\mathfrak{g}) \in \ComAlgstar(\Ran X)
\]
is factorizable.

Note that an analog of this result, where $\coChev$ is replaced by $\Chev$, has been proved in~\cite{francis_chiral_2011} (and in fact, we used this result in the previous section). The main difficulties of the $\coChev$ case stem from the fact that, unlike $\Chev$, $\coChev$ is defined as a limit, and most of the functors that we want it to interact with don't generally commute with limits.

As above, our main strategy is to introduce a certain co-connectivity condition to ensure that when one takes the limit of a diagram involving objects satisfying it, the answer, in some sense, converges instead of running off to infinity, so we still have a good control over it. 

We start with the precise statement of the theorem. Then, after a quick digression on the various notions related to the convergence of a limit, we will present the main strategy. Finally, the proof itself will be given.

\subsection{The statement}
We start with the co-connectivity conditions.

\begin{defn} \label{defn:connectivity_constraints_coLie_ComAlg_Ran}
	Let $\Shv(\Ran X)^{\geq n}$ denote the full subcategory of $\Shv(\Ran X)$ consisting of sheaves $\matheur{F}$ such that for all non-empty finite sets $I$,
	\[
		\matheur{F}|_{\oversetsupscript{X}{\circ}{I}} \in \Shv(\oversetsupscript{X}{\circ}{I})^{\geq n},
	\]
	As before, we use the perverse $t$-structure.
\end{defn}

\begin{notation}
We will use
\[
	\coLiestar(\Ran X)^{\geq n} \qquad\text{and}\qquad \ComAlgstar(\Ran X)^{\geq n}
\]
to denote
\[
	\coLiestar(\Shv(\Ran X)^{\geq n}) \qquad\text{and}\qquad \ComAlgstar(\Shv(\Ran X)^{\geq n})
\]
respectively.
\end{notation}

Our main goal is to prove the following
\begin{thm} \label{thm:factorizability_coChev}
Restricted to the full subcategory $\coLiestar(X)^{\geq 1}$ of $\coLiestar(\Ran X)^{\geq 1}$ consisting of $\coLie$-coalgebras whose underlying sheaves are supported on the diagonal $X$, the functor $\coChev$ factors through $\Factstar$, i.e. we have the following commutative diagram
\[
\xymatrix{
	\coLiestar(X)^{\geq 1} \ar[dr]_{\coChev} \ar[rr]^{\coChev} && \ComAlgstar(\Ran X) \\
	& \Factstar(X) \ar@{^(->}[ur]
}
\]
In other words, $\coChev \mathfrak{g}$ is factorizable when $\mathfrak{g} \in \coLiestar(X)^{\geq 1}$.
\end{thm}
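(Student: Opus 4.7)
My plan follows the same template as Theorem~\ref{thm:Koszul_duality_connectivity_on_Ran}, but now with the co-connectivity $\geq 1$ playing the role that $\leq c_L$ played there: it ensures that the \emph{limit} computing $\coChev$ (rather than the colimit computing $\Chev$) is well-controlled when restricted to each $\oversetsupscript{X}{\circ}{I}$. As a first step I would reduce to the case in which $\mathfrak{g}$ has trivial cobracket (i.e.\ is abelian) by the $\assgr \circ \addFil$ technique already invoked at the end of \S\ref{subsec:koszul_equivalence_Lie_ComCoAlg_Ran_case}: factorizability is preserved under passage to the associated graded of a suitable filtration, the associated graded of an object of $\coLiestar(X)^{\geq 1}$ still lies in $\coLiestar(X)^{\geq 1}$, and the associated graded of a coLie algebra has trivial cobracket.

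The key local observation, which is what makes the $X$-supported hypothesis essential, is the following consequence of formula~\eqref{eq:otimesstar_explicit_formula}: since $\mathfrak{g}|_{\oversetsupscript{X}{\circ}{J}} \simeq 0$ for $|J| > 1$, every summand of $\mathfrak{g}^{\otimesstar m}|_{\oversetsupscript{X}{\circ}{I}}$ forces each $|I_i| = 1$, so the indexing reduces to a map $\sigma : \{1,\ldots,m\} \to I$, and the diagonal pullback of $(\mathfrak{g}|_X)^{\boxtimes m}|_{(X^m)_{\disj}}$ along $\Delta_\sigma$ is non-zero only when $\sigma$ is injective. Consequently
\[
	\mathfrak{g}^{\otimesstar m}|_{\oversetsupscript{X}{\circ}{I}} \simeq 0 \quad \text{for } m \neq |I|,
\]
and for $m = |I|$ it recovers $(\mathfrak{g}|_X)^{\boxtimes I}|_{\oversetsupscript{X}{\circ}{I}}$ with its $S_I$-action. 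In other words, when $\mathfrak{g}$ is supported on $X$, the $\otimesstar$-monoid structure becomes ``arity-homogeneous'' on each stratum $\oversetsupscript{X}{\circ}{I}$.

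Given this, verifying the factorizability map~\eqref{eq:commutative_fac_map} for $\matheur{B} = \coChev \mathfrak{g}$ becomes a combinatorial bookkeeping: on any $(X^{I_1} \times \cdots \times X^{I_k})_\disj$ the two sides both reduce, via the arity-homogeneity statement above, to the $\boxtimes$-product of the arity-$|I_i|$ pieces $(\mathfrak{g}|_X)^{\boxtimes I_i}$, and the resulting matching is the standard combinatorial identity for surjections $\sqcup I_i \surjects I_1 \sqcup \cdots \sqcup I_k$ restricted to the disjoint locus. The principal obstacle, and precisely the point where the $\geq 1$ (rather than $\geq 0$) bound enters, is justifying the exchange of the $\Tot$ defining $\coChev = \Tot(\coBarP_{\coLie}^\bullet(\mathfrak{g}))[-1]$ with restriction to $\oversetsupscript{X}{\circ}{I}$ and with $\boxtimes$. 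The co-connectivity ensures that on each $\oversetsupscript{X}{\circ}{I}$ the cosimplicial terms $\coBarP_{\coLie}^n(\mathfrak{g})|_{\oversetsupscript{X}{\circ}{I}}$ are increasingly co-connective in $n$ (the shift coming from the $!$-pullback along $X^I \hookrightarrow X^m$ is absorbed by the $\geq 1$-bound on $\mathfrak{g}$), so that each cohomological degree of the totalization is determined by finitely many cosimplicial levels — directly mirroring the stability estimates of Lemmas~\ref{lem:vec_case_stability} and~\ref{lem:ran_case_stability}, with limits in place of colimits.
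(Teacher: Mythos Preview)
Your outline has the right shape but misidentifies the central technical device, and this is not a cosmetic slip. The ``$\assgr\circ\addFil$'' trick of Appendix~A is formulated for \emph{algebras over an operad}: it works because the functor $\Phi$ under study (there, $\Chev$) commutes with $\oblv_{\Fil}$, which is a \emph{colimit}. Here $\Phi=\coChev$ is a \emph{limit}, so $\oblv_{\Fil}\circ\coChev^{\Fil}\not\simeq\coChev\circ\oblv_{\Fil}$ in general, and the top square of~\eqref{eq:addFil_trick_main_diagram} does not commute. (There is a second problem even before this: for a coLie coalgebra the cobracket $\mathfrak{g}\to\mathfrak{g}\otimesstar\mathfrak{g}$ cannot survive on $\addFil(\mathfrak{g})$, since in $\Fil^{>0}$ the target vanishes in degree~$1$; so $\addFil$ does not lift coLie structures in a way recovered by $\oblv_{\Fil}$.) The paper's proof therefore uses the dual device $\addCoFil$, and the entire content of \S4.2--\S4.6 is to make the dual diagram~\eqref{eq:addCoFil_trick_main_diagram} commute: one must show that $\coChev_{\coFil}$ lands in the \emph{stabilizing} subcategory, on which $\oblv_{\coFil}$ is symmetric monoidal and commutes with the limit computing $\coChev$, and that on the \emph{decaying} subcategory $\prod\simeq\bigoplus$. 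This is exactly where $\geq 1$ enters. Note also that the logic requires $\assgr$ and $\prod$ to \emph{reflect} factorizability while $\oblv_{\coFil}$ \emph{preserves} it; your phrasing ``factorizability is preserved under passage to the associated graded'' has the arrow backwards.

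Your arity-homogeneity observation is correct and is essentially why, in the trivial case, $\coChev\mathfrak{g}\simeq\Sym^{>0}(\mathfrak{g}[-1])$ rather than its completion. But your diagnosis of where $\geq 1$ enters is off: restriction to $\oversetsupscript{X}{\circ}{I}$ is $!$-pullback and commutes with limits automatically, and by your own arity-homogeneity there is no $!$-pullback along $X^I\hookrightarrow X^m$ with $m\neq|I|$ to produce a shift. The genuine obstruction is not stratum-wise convergence of $\Tot$ but getting $\coChev$ to commute with the (co)filtration functors needed for the reduction to the trivial case; that is what the stabilization and decay machinery buys, and what your proposal does not supply.
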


\subsection{Stabilizing co-filtrations and decaying sequences (a digression)} We will now describe a condition on co-filtered and graded objects which make them behave nicely with respect to taking limits. 

\begin{defn}
Let $\matheur{C}$ be a stable infinity category equipped with a $t$-structure. Then, a co-filtered object $c\in \matheur{C}^{\coFil^{>0}}$ (see \S\ref{sec:appendix:cofiltration_addCoFil}) is said to stabilize if for all $n$, the induced map
\[
	\tr_{\leq n} c_m \rightarrow \tr_{\leq n} c_{m+1}
\]
is an equivalence for all $m\gg 0$.

A graded object $c\in \matheur{C}^{\gr^{>0}}$ is said to be decaying if for all $n$, we have
\[
	\tr_{\leq n} c_m \simeq 0
\]
for all $m\gg 0$.
\end{defn}

\begin{notation}
We use $\matheur{C}^{\coFil^{>0}, \stab}$ and $\matheur{C}^{\gr^{>0}, \decay}$ to denote the subcategories of $\matheur{C}^{\coFil^{>0}}$ and $\matheur{C}^{\gr^{>0}}$ consisting of stabilizing and decaying objects respectively.
\end{notation}

We have the following lemmas, whose proofs are straightforward.
\begin{lem}
Let $c\in \matheur{C}^{\coFil^{>0}}$. Then $c\in \matheur{C}^{\coFil^{>0} , \stab}$ if and only if $\assgr c \in \matheur{C}^{\gr^{>0}, \decay}$.
\end{lem}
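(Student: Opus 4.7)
The plan is to convert both ``stabilization'' and ``decay'' into vanishing statements on the cohomology sheaves of the $t$-structure and then read off the equivalence directly from the long exact sequence in cohomology.

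First I would write out the fiber--cofiber sequence
\[
	c_{m-1} \to c_m \to (\assgr c)_m
\]
relating a co-filtered object to its associated graded pieces (with the convention $c_0 = 0$), and then pass to cohomology to obtain a long exact sequence
\[
	\cdots \to H^i(c_{m-1}) \to H^i(c_m) \to H^i((\assgr c)_m) \to H^{i+1}(c_{m-1}) \to \cdots
\]
This is the engine that converts truncation-level statements about $\tr_{\leq n} c_m$ into statements about the individual cohomology objects $H^i$ for $i \leq n$, and symmetrically for $\tr_{\leq n}(\assgr c)_m$.

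For the ``only if'' direction, I would assume that $c$ stabilizes, so that for each $n$ the maps $H^i(c_m) \to H^i(c_{m+1})$ are isomorphisms for every $i \leq n$ and every $m$ sufficiently large. The long exact sequence then forces $H^i((\assgr c)_{m+1})$ to vanish in the corresponding range for $m \gg 0$, which is exactly the decay condition on $\assgr c$. Conversely, if $\assgr c$ decays, so $H^i((\assgr c)_m) = 0$ for $i \leq n$ and $m \gg 0$, exactness makes every structure map an isomorphism on $H^i$ in the same range, and $c$ stabilizes.

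Since the statement is really a bookkeeping result, I do not expect any serious obstacle: once the fiber sequence above is identified, both implications reduce to reading off the long exact sequence. The only mild subtlety is a small index shift when passing between the truncation threshold on $c$ and the one on $\assgr c$, which is easily absorbed by loosening the threshold by $1$ and does not present a real difficulty.
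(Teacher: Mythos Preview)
The paper does not actually prove this lemma; it is listed among results ``whose proofs are straightforward'' and left to the reader. Your long exact sequence argument is precisely the obvious verification the authors have in mind, and it works.

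One small correction: in the paper's conventions a co-filtration is a functor out of $\mathbb{Z}^{\op}$, so the structure maps go $c_m \to c_{m-1}$, and the associated graded is defined as the \emph{fiber}, $(\assgr c)_m = \Fib(c_m \to c_{m-1})$. Thus the relevant fiber sequence reads
\[
	(\assgr c)_m \longrightarrow c_m \longrightarrow c_{m-1},
\]
not $c_{m-1} \to c_m \to (\assgr c)_m$ as you wrote (there is no map $c_{m-1} \to c_m$ to take a cofiber of). Once you make this swap, your long exact sequence argument goes through verbatim, including your remark about the harmless index shift by $1$.
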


\begin{lem} \label{lem:limit_of_stabilizing_cofiltration}
If $c \in \matheur{C}^{\coFil^{>0}, \stab}$, then for each $n$, the natural map
\[
	\tau_{\leq n} \oblv_{\coFil} c \to \tau_{\leq n} c_m
\]
is an equivalence when $m\gg 0$.
\end{lem}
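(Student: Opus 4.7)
The plan is to identify $\oblv_{\coFil} c$ with the limit of the diagram $\{c_m\}_{m > 0}$ underlying the co-filtration, and then combine this description with the stabilization hypothesis to control each truncation separately. By the definition of stabilizing, for each fixed $n$ there is some $M = M_n$ such that the transition maps $\tr_{\leq n} c_m \to \tr_{\leq n} c_{m+1}$ are equivalences for all $m \geq M$; in particular the induced tower $\{\tau_{\leq n} c_m\}$ is eventually constant, and its limit (computed in $\matheur{C}$) is $\tau_{\leq n} c_{M}$.

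The key step is then to compare $\tau_{\leq n}(\lim_m c_m)$ with $\lim_m \tau_{\leq n} c_m$. I plan to do this by applying $\lim_m$ to the truncation fiber sequences
\[
\tau_{\geq n+1} c_m \to c_m \to \tau_{\leq n} c_m,
\]
which, since limits in a stable $\infty$-category preserve fiber sequences, yields a fiber sequence
\[
\lim_m \tau_{\geq n+1} c_m \to \oblv_{\coFil} c \to \tau_{\leq n} c_{M}.
\]
Because the inclusion $i_{\geq n+1}: \matheur{C}^{\geq n+1} \hookrightarrow \matheur{C}$ is a right adjoint to $\tr_{\geq n+1}$, the subcategory $\matheur{C}^{\geq n+1}$ is closed under limits, so the fiber $\lim_m \tau_{\geq n+1} c_m$ lies in $\matheur{C}^{\geq n+1}$. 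By the uniqueness of the truncation triangle, any fiber sequence $A \to B \to C$ with $A \in \matheur{C}^{\geq n+1}$ and $C \in \matheur{C}^{\leq n}$ exhibits $C$ as $\tau_{\leq n} B$. Applied to our sequence, this gives $\tau_{\leq n} \oblv_{\coFil} c \simeq \tau_{\leq n} c_{M}$, which is exactly the assertion, and inspection of the construction shows that the resulting equivalence is the natural one.

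There is no serious obstacle here; the only subtlety is keeping track of the direction of the adjunctions so that it is $\matheur{C}^{\geq n+1}$ (and not $\matheur{C}^{\leq n}$) that is closed under the limit of the tower. Once that is in hand, the rest is a formal manipulation with the truncation triangle, and the same argument will presumably be reused in the later proofs about $\coChev$, where a co-filtration arising from the coBar tower needs to be shown to stabilize.
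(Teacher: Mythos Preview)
Your proof is correct and follows essentially the same approach as the paper's. Both arguments reduce to showing that a certain fiber lies in $\matheur{C}^{\geq n+1}$ and invoke the fact that $i_{\geq n+1}$, being a right adjoint, makes $\matheur{C}^{\geq n+1}$ closed under limits; the only cosmetic difference is that the paper takes the limit of the fibers $\Fib(c_i \to c_M)$ (after reindexing so that all transitions are equivalences on $\tau_{\leq n+1}$), whereas you take the limit of the truncation triangles $\tau_{\geq n+1} c_m \to c_m \to \tau_{\leq n} c_m$ directly.
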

\begin{proof}
	By throwing away finitely many terms at the beginning, without loss of generality, we can assume that the natural maps
	\[
		\tau_{\leq n+1} c_i \to \tau_{\leq n+1} c_j, \qquad \forall i\geq j>0
	\]
	are all equivalences. Now, it suffices to show that the following map is an equivalence
	\[
		\tau_{\leq n} \lim_i c_i \to \tau_{\leq n} c_1.
	\]
	Equivalently, it suffices to show that
	\[
		\Fib(\lim_i c_i \to c_1) \in \matheur{C}^{\geq n+1}.
	\]
	
	However,
	\[
		\Fib(\lim_i c_i \to c_1) \simeq \lim_i(\Fib(c_i \to c_1)) \in \matheur{C}^{\geq n+1}
	\]
	because
	\[
		\Fib(c_i \to c_1) \in \matheur{C}^{\geq n+1}, \qquad\forall i.
	\]
	Hence, we are done, since
	\[
		i_{\geq n+1}: \matheur{C}^{\geq n+1} \to \matheur{C}
	\]
	commutes with limits (see \S\ref{subsubsec:t-structure_convention}).
\end{proof}

\begin{lem}
The natural transformation
\[
	\bigoplus \to \prod
\]
between functors
\[
	\matheur{C}^{\gr^{>0}, \decay} \to \matheur{C}
\]
is an equivalence.
\end{lem}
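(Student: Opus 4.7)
The approach is to verify that the comparison map $\bigoplus_i c_i \to \prod_i c_i$ becomes an equivalence after applying $\tau_{\leq n}$ for every integer $n$. Since the $t$-structure on $\matheur{C}$ is left separated (a standing assumption in our setting, as singled out already in \S\ref{subsec:Koszul_equivalence_Lie_ComCoAlg_in_Vect}), this is enough to conclude that the map itself is an equivalence.

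Fix such an $n$, and use the decay hypothesis to choose $M = M(n)$ with $\tau_{\leq n} c_m \simeq 0$ for all $m \geq M$. On the coproduct side the computation is immediate: $\tau_{\leq n}$ is left adjoint to $i_{\leq n}$, hence commutes with all colimits, so
\[
\tau_{\leq n}\Bigl(\bigoplus_{i>0} c_i\Bigr) \simeq \bigoplus_{i>0} \tau_{\leq n} c_i \simeq \bigoplus_{0<i<M} \tau_{\leq n} c_i,
\]
which is a \emph{finite} direct sum. For the product side, applying $\prod_i$ to the fiber sequences $\tau_{\geq n+1} c_i \to c_i \to \tau_{\leq n} c_i$ yields a fiber sequence in $\matheur{C}$; invoking that $\matheur{C}^{\geq n+1}$ and $\matheur{C}^{\leq n}$ are each closed under (infinite) products, the outer terms land in the expected subcategories, and the universal property of truncation identifies $\tau_{\leq n}(\prod_i c_i) \simeq \prod_i \tau_{\leq n} c_i$. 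Decay collapses this once more to the finite product $\prod_{0<i<M} \tau_{\leq n} c_i$.

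The final step is the observation that in any stable $\infty$-category, finite products and finite coproducts agree, and the natural comparison map is precisely this identification; hence the two truncations coincide via the map in question. The only mild subtlety is the need for $\matheur{C}^{\leq n}$ and $\matheur{C}^{\geq n+1}$ to be closed under arbitrary products---an innocuous standing property of $\matheur{C}$, paralleling the condition on $\Vect$ isolated in \S\ref{subsec:Koszul_equivalence_Lie_ComCoAlg_in_Vect}---which holds in all the applications of interest. Everything else is routine cohomological bookkeeping.
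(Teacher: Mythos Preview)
Your argument is correct. The paper organizes the same idea differently: rather than truncating the coproduct and product separately, it writes $\prod_i c_i \simeq \lim_k \bigoplus_{i\leq k} c_i$, observes that the decay hypothesis makes this tower a stabilizing co-filtration, and then appeals directly to Lemma~\ref{lem:limit_of_stabilizing_cofiltration} to identify $\tau_{\leq n}$ of the limit with $\tau_{\leq n}$ of a sufficiently large partial sum---which of course also computes $\tau_{\leq n}$ of the full coproduct. Your direct computation is more self-contained and makes the needed $t$-structure hypotheses explicit; the paper's version has the virtue of reusing the stabilizing-co-filtration machinery already in place, and on the product side only needs closure of $\matheur{C}^{\geq n+1}$ under limits (automatic) rather than closure of $\matheur{C}^{\leq n}$ under products. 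One small notational point: in the paper's conventions it is $\tr_{\leq n}$, not $\tau_{\leq n} = i_{\leq n}\circ\tr_{\leq n}$, that is left adjoint to $i_{\leq n}$; your conclusion that $\tau_{\leq n}$ preserves colimits is nonetheless correct, since $\matheur{C}^{\leq n}$ is closed under colimits in $\matheur{C}$.
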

\begin{proof}
Note that
\[
	\prod_i c_i \simeq \lim_k \bigoplus_{i\leq k} c_i.
\]
Moreover, since the sequence we are taking the limit over stabilizes, the result follows as a direct consequence of Lemma~\ref{lem:limit_of_stabilizing_cofiltration}.
\end{proof}

\subsubsection{} The various definitions and observations above have straightforward analogues in the case of sheaves on the $\Ran$ space.
\begin{defn}
A co-filtered sheaf $\matheur{F} \in \Shv(\Ran X)^{\coFil^{>0}}$ is said to stabilize if for any non-empty finite set $I$,
\[
	\matheur{F}|_{\oversetsupscript{X}{\circ}{I}} \in \Shv(\oversetsupscript{X}{\circ}{I})^{\coFil^{>0}, \stab}.
\]

Similarly, a graded sheaf $\matheur{F} \in \Shv(\Ran X)^{\gr^{>0}}$ is said to be decaying if for any non-empty finite set $I$,
\[
	\matheur{F}|_{\oversetsupscript{X}{\circ}{I}} \in \Shv(\oversetsupscript{X}{\circ}{I})^{\gr^{>0}, \decay}.
\]
\end{defn}

\begin{notation}
	We use $\Shv(\Ran X)^{\coFil^{>0}, \stab}$ and $\Shv(\Ran X)^{\gr^{>0}, \decay}$ to denote the full-subcategories of $\Shv(\Ran X)^{\coFil^{>0}}$ and $\Shv(\Ran X)^{\gr^{>0}}$ consisting of stabilizing and decaying objects, respectively.
\end{notation}

It's straightforward to see that the following analogs of the lemmas above still hold in this setting.

\begin{lem} \label{lem:stab_decay_equivalence_Ran_case}
Let $\matheur{F} \in \Shv(\Ran X)^{\coFil^{>0}}$. Then $\matheur{F} \in \Shv(\Ran X)^{\coFil^{>0}, \stab}$ if and only if $\assgr\matheur{F} \in \Shv(\Ran X)^{\gr^{>0}, \decay}$.
\end{lem}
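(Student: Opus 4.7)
The plan is to reduce the claim to the analogous scheme-level statement proved earlier in this subsection for a stable $\infty$-category $\matheur{C}$ equipped with a $t$-structure. The point is that both of the conditions in question --- stabilization of a co-filtration and decay of a graded object --- are defined \emph{stratum-wise} on $\Ran X$, i.e.\ they are tested by $!$-restricting to each $\oversetsupscript{X}{\circ}{I}$ for $I$ a non-empty finite set. So, in principle, the lemma should follow by applying the scheme-level version on each stratum.

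The one substantive thing to check is that $!$-restriction to $\oversetsupscript{X}{\circ}{I}$ commutes with the formation of $\assgr$. Concretely, I would note that for a co-filtered object, each graded piece is built out of a finite fiber sequence from the co-filtration maps, and the functor
\[
	(-)|_{\oversetsupscript{X}{\circ}{I}}: \Shv(\Ran X) \to \Shv(\oversetsupscript{X}{\circ}{I})
\]
is an exact, continuous functor of stable $\infty$-categories which moreover is $t$-exact for the perverse $t$-structures in the sense needed (being a $!$-pullback along an open immersion). Hence it commutes with the fiber-sequence operations defining $\assgr$ and one obtains a natural equivalence $(\assgr\matheur{F})|_{\oversetsupscript{X}{\circ}{I}} \simeq \assgr(\matheur{F}|_{\oversetsupscript{X}{\circ}{I}})$ for every $I$.

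Given this compatibility, the proof is then a direct unwinding of definitions. By the definition of $\Shv(\Ran X)^{\coFil^{>0},\stab}$, membership is equivalent to $\matheur{F}|_{\oversetsupscript{X}{\circ}{I}} \in \Shv(\oversetsupscript{X}{\circ}{I})^{\coFil^{>0},\stab}$ for every $I$; analogously for $\Shv(\Ran X)^{\gr^{>0},\decay}$. Applying the scheme-level lemma to $\matheur{C} = \Shv(\oversetsupscript{X}{\circ}{I})$ then gives the ``iff'' on each stratum, and the two stratum-wise conditions are, via the above commutation, precisely the two Ran-level conditions of the statement.

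The only real obstacle is the commutation of $\assgr$ with $!$-restriction; once this is recorded, everything else is formal. Since nothing in the rest of the argument is specific to the prestack $\Ran X$, the same proof scheme will apply more generally to any prestack written as a colimit of its strata, which will be useful later.
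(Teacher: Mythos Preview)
Your proposal is correct and is precisely the argument the paper has in mind: the paper does not give a proof, merely noting that the statement is a straightforward analog of the scheme-level lemma, and your stratum-wise reduction makes this explicit. One small remark: the map $\oversetsupscript{X}{\circ}{I} \to \Ran X$ is not literally an open immersion of prestacks, but this is immaterial since all you actually use is that $!$-pullback is exact (which holds for any morphism of prestacks), and $t$-exactness plays no role in the commutation with $\assgr$.
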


\begin{lem}
	If $\matheur{F} \in \Shv(\Ran X)^{\coFil^{>0}, \stab}$, then for each $I$ and $n$, the natural map\footnote{Note that $\oblv_{\coFil}$ commutes with restricting to $\oversetsupscript{X}{\circ}{I}$ for any non-empty, finite set $I$. Thus, the LHS is free of ambiguity.}
	\[
		\tau_{\leq n} \oblv_{\coFil} \matheur{F}|_{\oversetsupscript{X}{\circ}{I}} \to \tau_{\leq n} \matheur{F}_m|_{\oversetsupscript{X}{\circ}{I}}
	\]
	is an equivalence when $m\gg 0$.
\end{lem}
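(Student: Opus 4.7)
The plan is to reduce this statement directly to its analog for a stable infinity category, namely Lemma~\ref{lem:limit_of_stabilizing_cofiltration}, by restricting everything in sight to $\oversetsupscript{X}{\circ}{I}$ and working inside $\Shv(\oversetsupscript{X}{\circ}{I})$.

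The first step is to observe that $!$-pullback along the locally closed embedding $\oversetsupscript{X}{\circ}{I} \hookrightarrow \Ran X$ commutes with limits, since it is a continuous functor in $\DGCatprescont$ admitting a left adjoint. Consequently, if we write the co-filtered sheaf as a tower $\matheur{F}_1 \leftarrow \matheur{F}_2 \leftarrow \cdots$, then restricting gives the tower $\matheur{F}_1|_{\oversetsupscript{X}{\circ}{I}} \leftarrow \matheur{F}_2|_{\oversetsupscript{X}{\circ}{I}} \leftarrow \cdots$ in $\Shv(\oversetsupscript{X}{\circ}{I})^{\coFil^{>0}}$, and we have a natural equivalence
\[
	(\oblv_{\coFil} \matheur{F})|_{\oversetsupscript{X}{\circ}{I}} \simeq \oblv_{\coFil}(\matheur{F}|_{\oversetsupscript{X}{\circ}{I}}).
\]
This is precisely the unambiguity referenced in the footnote.

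Next, by the very definition of $\Shv(\Ran X)^{\coFil^{>0}, \stab}$, the restriction $\matheur{F}|_{\oversetsupscript{X}{\circ}{I}}$ is a stabilizing co-filtered object in $\Shv(\oversetsupscript{X}{\circ}{I})$ with respect to the perverse $t$-structure. I then apply Lemma~\ref{lem:limit_of_stabilizing_cofiltration} to the stable infinity category $\matheur{C} = \Shv(\oversetsupscript{X}{\circ}{I})$: for each $n$, there exists $m_0$ (depending on $n$ and $I$) such that for all $m\geq m_0$, the map
\[
	\tau_{\leq n} \oblv_{\coFil}(\matheur{F}|_{\oversetsupscript{X}{\circ}{I}}) \to \tau_{\leq n} \matheur{F}_m|_{\oversetsupscript{X}{\circ}{I}}
\]
is an equivalence. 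Combined with the identification of the first step, this is exactly the statement we wanted.

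There is no real obstacle here; the only subtlety to flag is the commutation of $!$-restriction with the limit defining $\oblv_{\coFil}$, which is why the footnote is needed. Everything else is a transparent application of the previously established lemma fiber-by-fiber over the $\oversetsupscript{X}{\circ}{I}$'s.
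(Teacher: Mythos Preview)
Your proposal is correct and is exactly the argument the paper has in mind: the paper does not spell out a proof, merely noting that the analogs of the previous lemmas ``straightforwardly'' hold, and your reduction to Lemma~\ref{lem:limit_of_stabilizing_cofiltration} via the fact that $!$-restriction to $\oversetsupscript{X}{\circ}{I}$ commutes with limits is precisely that straightforward argument.
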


\begin{lem} \label{lem:direct_sum_and_products_are_the_same_Ran}
The natural transformation
\[
	\bigoplus \to \prod
\]
between functors
\[
	\Shv(\Ran X)^{\gr^{>0}, \decay} \to \Shv(\Ran X)
\]
is an equivalence.
\end{lem}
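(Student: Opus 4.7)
My plan is to reduce Lemma~\ref{lem:direct_sum_and_products_are_the_same_Ran} directly to the corresponding statement in the abstract stable setting (the unnumbered Vect-level lemma just above) by restricting to each open stratum $\oversetsupscript{X}{\circ}{I}$ of $\Ran X$.

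The first step is to observe that a morphism $\varphi$ in $\Shv(\Ran X)$ is an equivalence if and only if its restriction $\varphi|_{\oversetsupscript{X}{\circ}{I}}$ is an equivalence for every non-empty finite set $I$; this is the detection principle already used throughout \S\ref{subsec:koszul_equivalence_Lie_ComCoAlg_Ran_case}. So to prove that the canonical map
\[
  \bigoplus_{i} \matheur{F}_i \longrightarrow \prod_i \matheur{F}_i
\]
is an equivalence in $\Shv(\Ran X)$ for $\matheur{F}_\bullet \in \Shv(\Ran X)^{\gr^{>0}, \decay}$, it suffices to check this after applying the $!$-pullback along $\oversetsupscript{X}{\circ}{I}\to \Ran X$ for each $I$.

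The second step uses that this $!$-pullback functor commutes with both direct sums and products: it is continuous (commutes with colimits, hence with $\bigoplus$), and it is a right adjoint in the adjunction $f_!\dashv f^!$, so it commutes with limits, hence with $\prod$. Consequently, the restriction of the natural transformation $\bigoplus \to \prod$ to $\oversetsupscript{X}{\circ}{I}$ is identified with the corresponding natural transformation
\[
  \bigoplus_i \bigl(\matheur{F}_i|_{\oversetsupscript{X}{\circ}{I}}\bigr) \longrightarrow \prod_i \bigl(\matheur{F}_i|_{\oversetsupscript{X}{\circ}{I}}\bigr)
\]
inside $\Shv(\oversetsupscript{X}{\circ}{I})$. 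By the definition of $\Shv(\Ran X)^{\gr^{>0},\decay}$, the graded sequence $\matheur{F}_\bullet|_{\oversetsupscript{X}{\circ}{I}}$ lies in $\Shv(\oversetsupscript{X}{\circ}{I})^{\gr^{>0},\decay}$.

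The third and final step is then to invoke the Vect-level lemma already proved (the unnumbered lemma immediately preceding the material on the Ran case), applied to the stable presentable category $\matheur{C} = \Shv(\oversetsupscript{X}{\circ}{I})$ equipped with its perverse $t$-structure; that lemma asserts precisely that $\bigoplus\to \prod$ is an equivalence on decaying graded objects. Since this holds for every $I$, the original map is an equivalence in $\Shv(\Ran X)$, completing the proof. I do not expect a genuine obstacle here: once one grants the stratum-wise detection of equivalences and the fact that $!$-pullback preserves both direct sums and products, the statement is a formal consequence of Lemma~\ref{lem:stab_decay_equivalence_Ran_case} together with the already-established abstract Vect-level result.
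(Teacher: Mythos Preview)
Your proposal is correct and is exactly the argument the paper has in mind: the paper does not spell out a proof, merely declaring the Ran-space analogs ``straightforward'', and the natural way to make this precise is precisely your stratum-wise reduction---check equivalences on each $\oversetsupscript{X}{\circ}{I}$, use that $(-)^!$ preserves both $\bigoplus$ and $\prod$, and invoke the general-$\matheur{C}$ lemma with $\matheur{C}=\Shv(\oversetsupscript{X}{\circ}{I})$. The reference to Lemma~\ref{lem:stab_decay_equivalence_Ran_case} in your final sentence is extraneous (you never actually use it), but this does not affect the argument.
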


\subsection{Strategy} To prove that $\Chev \mathfrak{g}$ is factorizable when $\mathfrak{g}\in \Liestar(X)$, \cite{francis_chiral_2011} uses the $\addFil$ trick (see \S\ref{sec:appendix:addFil_trick}) to reduce to the case where $\mathfrak{g}$ is a trivial. When $\mathfrak{g}$ is trivial, we have
\[
\Chev\mathfrak{g} \simeq \Sym^{>0}\mathfrak{g},
\]
and the result can be seen directly.

In the case of $\coChev$, while the core strategy remains the same, it is more complicated to carry out since many commutative diagrams needed for the $\addFil$ trick to work (see~\eqref{eq:addFil_trick_main_diagram}) don't commute in general in this new setting. The co-connectivity constraints are what we need to make these diagrams commute and hence, to allow us to reduce to the trivial case.

\subsubsection{} Let us now sketch the strategy. Suppose for the moment that we have the following commutative diagram, which is analogous to~\eqref{eq:addFil_trick_main_diagram}, except for the extra conditions
\[
\xymatrix{
	\coLiestar(X)^{\geq 1} \ar[d]_{\addCoFil} \ar[rr]^{\coChev} && \ComAlgstar(\Ran X)^{\geq 2} \\
	\coLiestar(X)^{\geq 1, \coFil^{>0}, \stab} \ar[rr]^{\coChev_{\coFil}} \ar[d]_{\assgr} && \ComAlgstar(\Ran X)^{\geq 2, \coFil^{>0}, \stab} \ar[d]_{\assgr} \ar[u]^{\oblv_{\coFil}} \\
	\coLiestar(X)^{\geq 1, \gr^{>0}, \decay} \ar[rr]^{\coChev_{\gr}} \ar[d]_{\prod} && \ComAlgstar(\Ran X)^{\geq 2, \gr^{>0}, \decay} \ar[d]_{\prod} \\
	\coLiestar(X)^{\geq 1} \ar[rr]^{\coChev} && \ComAlgstar(\Ran X)^{\geq 2}
} \teq\label{eq:addCoFil_trick_main_diagram}
\]
Suppose also that $\oblv_{\coFil}$ preserves factorizability, and that $\assgr$ and $\prod$ are conservative with respect to factorizability.\footnote{Here, by conservativity, we mean that an object satisfies factorizability condition if its image under the functor does.} Then by the same reasoning as in the $\addFil$ trick, to prove that $\coChev\mathfrak{g}$ is factorizable, it suffices to assume that $\mathfrak{g}$ has a trivial $\coLie$-structure. In that case,
\[
\coChev\mathfrak{g} \simeq \Sym^{>0}(\mathfrak{g}[-1]),
\]
and as in the $\Chev$ case, we are done.

In \S\ref{subsec:well-definedness_of_functors}--\S\ref{subsec:factorizability_reflected}, we will carry out the strategy outlined above and conclude the proof of Theorem~\ref{thm:factorizability_coChev}.

\subsection{Well-definedness of functors}
\label{subsec:well-definedness_of_functors}
Before proving that the diagram commutes, we need to first make sense of it. A priori, the functors written in the diagram are not necessarily well-defined. For instance, we have not shown that all the four instances of $\coChev$ land in the correct target categories. Moreover, we also do not know that $\oblv_{\coFil}, \assgr$, and $\prod$ preserve the algebra/co-algebra structures.

The latter question is settled by the following observation, whose proof, which makes use of the stability and decaying conditions to commute limits and tensor products, is straight-forward.
\begin{lem} \label{lem:oblvCoFil_ass-gr_prod_are_monoidal}
For any $n$, the functors
\begin{align*}
	\oblv_{\coFil}&: \Shv(\Ran X)^{\geq n, \coFil^{>0}, \stab} \to \Shv(\Ran X)^{\geq n} \\
	\assgr&: \Shv(\Ran X)^{\geq n, \coFil^{>0}} \to \Shv(\Ran X)^{\geq n, \gr^{>0}} \\
	\prod \simeq \bigoplus&: \Shv(\Ran X)^{\geq n, \gr^{>0}, \decay} \to \Shv(\Ran X)^{\geq n}
\end{align*}
are symmetric monoidal with respect to the $\otimesstar$-monoidal structure on $\Ran X$. In particular, they automatically upgrade to functors between corresponding categories of algebras/co-algebras.
\end{lem}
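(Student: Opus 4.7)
The plan is to realize each of the three functors as arising from a symmetric monoidal structure on the categories of co-filtered and graded sheaves that is defined by Day convolution, and then to check that passing to the subcategories with the stabilizing/decaying conditions is compatible with $\otimesstar$. Throughout, I will tacitly use that $\otimesstar$ on $\Shv(\Ran X)$ is defined via $\union_!\circ \boxtimes$ and hence is continuous in each variable and right $t$-exact for the appropriate connectivity conditions established in Lemma~\ref{lem:otimesstar_preserves_cL_ccA} and its upper-bound variants.

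First I would treat $\assgr$ and $\oblv_{\coFil}$. On $\Shv(\Ran X)^{\coFil^{>0}}$, viewed as functors $\mathbb{Z}^{>0}_{\geq} \to \Shv(\Ran X)$, the $\otimesstar$-monoidal structure is the Day convolution for the ordered monoid $\mathbb{Z}^{>0}_{\geq}$, namely
\[
(\matheur{F}\otimesstar \matheur{G})_n \;\simeq\; \colim_{i+j\geq n}\matheur{F}_i \otimesstar \matheur{G}_j.
\]
The same formula but with $i+j=n$ defines the $\otimesstar$-monoidal structure on $\Shv(\Ran X)^{\gr^{>0}}$. With these definitions, $\oblv_{\coFil}$ and $\assgr$ are symmetric monoidal by general nonsense: $\oblv_{\coFil}$ corresponds to pulling back along the monoidal functor $\mathbb{Z}_{\geq}^{>0}\to \{*\}$, and $\assgr$ corresponds to pulling back along the monoidal functor $\mathbb{Z}_{\geq}^{>0}\to \mathbb{Z}^{>0}$ (or is otherwise well-known for filtered/graded objects in any stable symmetric monoidal $\infty$-category). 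Both manifestly send the subcategory cut out by $\geq n$ to itself because the connectivity condition of Definition~\ref{defn:connectivity_constraints_coLie_ComAlg_Ran} is stable under colimits.

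Next I would handle $\prod \simeq \bigoplus: \Shv(\Ran X)^{\geq n,\gr^{>0},\decay}\to \Shv(\Ran X)^{\geq n}$. Using Lemma~\ref{lem:direct_sum_and_products_are_the_same_Ran} to replace $\prod$ by $\bigoplus$, the claim becomes that for $\matheur{F},\matheur{G}\in \Shv(\Ran X)^{\gr^{>0},\decay}$,
\[
\bigoplus_n (\matheur{F}\otimesstar\matheur{G})_n \;\simeq\; \bigoplus_n\Big(\bigoplus_{i+j=n}\matheur{F}_i\otimesstar\matheur{G}_j\Big) \;\simeq\; \Big(\bigoplus_i \matheur{F}_i\Big)\otimesstar\Big(\bigoplus_j \matheur{G}_j\Big),
\]
which is immediate from continuity of $\otimesstar$ in each variable (since $\union_!$ and $\boxtimes$ both preserve colimits).

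The one nontrivial point -- and what I expect to be the main obstacle, even though it is still routine -- is checking that the stabilizing and decaying conditions are closed under $\otimesstar$, so that the displayed source and target categories really do carry $\otimesstar$-monoidal structures. By Lemma~\ref{lem:stab_decay_equivalence_Ran_case} it suffices to treat the decaying case. Fix a non-empty finite set $I$ and $n\in \mathbb{Z}$; I need to bound the perverse cohomological degree of $(\matheur{F}\otimesstar\matheur{G})_m|_{\oversetsupscript{X}{\circ}{I}}$ for $m\gg 0$. Using the Day convolution formula and the explicit expression~\eqref{eq:otimesstar_explicit_formula} for $\otimesstar$, each summand has the form $\Delta^!(\matheur{F}_i|_{\oversetsupscript{X}{\circ}{I_1}}\boxtimes \matheur{G}_j|_{\oversetsupscript{X}{\circ}{I_2}})$ with $i+j=m$; since either $i$ or $j$ must go to infinity, the decaying hypothesis on one of the factors combined with the bounded cohomological amplitude of $\Delta^!$ (equal to the codimension, as used in the proof of Lemma~\ref{lem:otimesstar_preserves_cL_ccA}) forces each such summand to eventually lie in arbitrarily negative cohomological degrees. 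Since for fixed $m$ there are only finitely many splittings $I = I_1\cup I_2$ and only finitely many pairs $(i,j)$ with $i,j\geq 1$, $i+j=m$, this uniform estimate gives decay of $\matheur{F}\otimesstar \matheur{G}$. The same book-keeping yields the stabilizing case.
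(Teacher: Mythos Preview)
Your treatment of $\assgr$ and of $\prod\simeq\bigoplus$ is essentially fine and matches what the paper has in mind (the paper states the lemma as a ``straight-forward observation'' without proof). The check that the decaying condition is closed under $\otimesstar$ is also correct in spirit, though note a sign slip: decay means the graded pieces eventually lie in arbitrarily \emph{high} perverse degrees, not arbitrarily negative ones.

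There is, however, a genuine gap in your argument for $\oblv_{\coFil}$. You describe $\oblv_{\coFil}$ as ``pulling back along the monoidal functor $\mathbb{Z}^{>0}_{\geq}\to\{*\}$'', but this is not what $\oblv_{\coFil}$ is: pullback along that map is the constant--diagram functor $\Shv(\Ran X)\to \Shv(\Ran X)^{\coFil^{>0}}$, going the other way. The functor $\oblv_{\coFil}$ is the \emph{right} Kan extension (i.e.\ the inverse limit), and right Kan extensions are only oplax monoidal for Day convolution, not strong monoidal. Indeed, on all of $\Shv(\Ran X)^{\coFil^{>0}}$ the functor $\oblv_{\coFil}$ is \emph{not} symmetric monoidal: limits do not commute with $\otimesstar$ in general. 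This is precisely why the paper only claims monoidality on the \emph{stabilizing} subcategory, and why Appendix~B records monoidality of $\assgr$ but pointedly omits $\oblv_{\coFil}$.

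What is missing is the step that actually uses stabilization. One way to fill it: there is always a comparison map
\[
\oblv_{\coFil}(\matheur{F})\otimesstar\oblv_{\coFil}(\matheur{G})\;\longrightarrow\;\oblv_{\coFil}(\matheur{F}\otimesstar\matheur{G}),
\]
and one checks it is an equivalence after restriction to each $\oversetsupscript{X}{\circ}{I}$ and each $\tau_{\leq N}$. By the lemma preceding Lemma~\ref{lem:direct_sum_and_products_are_the_same_Ran}, on stabilizing objects both sides agree with the corresponding expression at a finite stage $m\gg 0$; since $\otimesstar$ is exact (hence commutes with finite limits) in each variable, the map is an equivalence there. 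Your closure--under--$\otimesstar$ argument for the decaying/stabilizing conditions is exactly the ingredient that makes this work, but you need to invoke it for this purpose rather than only to say the subcategory is monoidal.
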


\subsubsection{} We will now tackle the former question: namely, the various instances of the functor $\coChev$ appeared in~\eqref{eq:addCoFil_trick_main_diagram} land in the correct target categories.

The top and bottom $\coChev$ are the same, and it's easy to see that they land in the correct category using the fact that the shriek-pullback functor is left exact and $\matheur{C}^{\geq n}$ is preserved under limits for any stable infinity category $\matheur{C}$ with a $t$-structure (since $i_{\geq n}$ commutes with limits, see \S\ref{subsubsec:t-structure_convention}).

By the same token, we know that the essential images of $\coChev_{\coFil}$ and $\coChev_{\gr}$ satisfy the co-connectivity assumption (i.e. live in (perverse) cohomological degree $\geq 1$). Thus, it remains to show that they also satisfy the $\stab$ and $\decay$ conditions respectively. For that, first observe that the assertion about $\assgr$ in Lemma~\ref{lem:oblvCoFil_ass-gr_prod_are_monoidal}, combined with the fact that $\assgr$ commutes with limits, gives us a weakened version of the middle square of~\eqref{eq:addCoFil_trick_main_diagram}.
\begin{cor} \label{cor:middle_diagram_cofiltrick_commutes_weakened}
	We have the following commutative diagram
	\[
	\xymatrix{
		\coLiestar(X)^{\geq 1, \coFil^{>0}, \stab} \ar[rr]^<<<<<<<<<<{\coChev_{\coFil}} \ar[d]_{\assgr} && \ComAlgstar(\Ran X)^{\geq 2, \coFil^{>0}} \ar[d]_{\assgr} \\
		\coLiestar(X)^{\geq 1, \gr^{>0}, \decay} \ar[rr]^<<<<<<<<<<{\coChev_{\gr}} && \ComAlgstar(\Ran X)^{\geq 2, \gr^{>0}}
	}
	\]
\end{cor}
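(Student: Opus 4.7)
The plan is to exhibit $\coChev$ as a totalization of a cosimplicial object built functorially out of the symmetric monoidal structure, and then observe that $\assgr$ respects both of these pieces of data. Concretely, I would unwind $\coChev = [-1]\circ \coBarP_{\coLie}$ as
\[
\coChev(\mathfrak{g}) \simeq [-1]\Tot\bigl(\coBarP^{\bullet}_{\coLie}(\mathfrak{g})\bigr),
\]
where the cosimplicial object $\coBarP^\bullet_{\coLie}(\mathfrak{g})$ is assembled from iterated $\otimesstar$-tensor powers of $\mathfrak{g}$ together with the cofree cocommutative functor $\cotriv_{\coLie}$ and its (co)monad structure. In particular, every arrow in the cosimplicial diagram is built entirely from the symmetric monoidal structure on the relevant sheaf category and from the cooperadic structure maps of $\coLie$.

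Next I would apply Lemma~\ref{lem:oblvCoFil_ass-gr_prod_are_monoidal}: $\assgr$ is symmetric monoidal for $\otimesstar$. Functoriality of the bar construction in symmetric monoidal functors then gives a natural equivalence of cosimplicial objects
\[
\assgr\bigl(\coBarP^{\bullet}_{\coLie}(\mathfrak{g})\bigr) \;\simeq\; \coBarP^{\bullet}_{\coLie}\bigl(\assgr(\mathfrak{g})\bigr)
\]
in $\ComAlgstar(\Ran X)^{\geq 2,\gr^{>0}}$, where the right-hand side is precisely the cosimplicial object whose totalization computes $\coChev_{\gr}(\assgr(\mathfrak{g}))$. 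This step is essentially formal given Lemma~\ref{lem:oblvCoFil_ass-gr_prod_are_monoidal}.

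It then remains to interchange $\assgr$ with the totalization. For this I would invoke the fact that $\assgr$ is computed as a limit and hence is itself a right adjoint; alternatively, one observes directly that $\assgr$ is defined componentwise via subquotients of a filtration and that the comparison maps used in its construction commute with the $!$-restrictions appearing in the limit diagram. Either way, $\assgr$ preserves totalizations, so
\[
\assgr\,\Tot\bigl(\coBarP^{\bullet}_{\coLie}(\mathfrak{g})\bigr) \;\simeq\; \Tot\,\assgr\bigl(\coBarP^{\bullet}_{\coLie}(\mathfrak{g})\bigr),
\]
and combining with the previous display and the shift $[-1]$ yields $\assgr\circ\coChev_{\coFil}\simeq \coChev_{\gr}\circ\assgr$ as functors.

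The only mildly delicate point will be verifying that the target categories of $\coChev_{\coFil}$ and $\coChev_{\gr}$ are as claimed, i.e.\ that the totalization lands in $\Shv(\Ran X)^{\geq 2}$ with the stated co-filtered/graded structure. The co-connectivity bound is automatic because $(-)^{!}$-restriction to each $\oversetsupscript{X}{\circ}{I}$ is left $t$-exact, tensor powers of an object in degrees $\geq 1$ lie in degrees $\geq 2$ by the cohomological estimates already used in Lemma~\ref{lem:otimesstar_preserves_cL_ccA}, and $i_{\geq n}$ commutes with limits (cf.\ \S\ref{subsubsec:t-structure_convention}); the co-filtered/graded structure is inherited from the monoidal functoriality above. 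I do not claim the $\stab$ or $\decay$ properties on the target here — that is the content of the strengthening still to come.
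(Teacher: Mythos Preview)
Your proposal is correct and follows essentially the same approach as the paper: the paper's proof is the single sentence preceding the corollary, namely that $\assgr$ is symmetric monoidal (Lemma~\ref{lem:oblvCoFil_ass-gr_prod_are_monoidal}) and commutes with limits, which is exactly what you spell out in detail via the cosimplicial presentation of $\coChev$. One small wording issue: ``computed as a limit hence a right adjoint'' is not a valid implication, but your alternative argument (that $\assgr$ is built from fibers, hence from finite limits, and therefore commutes with all limits) is the correct one and matches what the paper uses.
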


Now, by Lemma~\ref{lem:stab_decay_equivalence_Ran_case}, to show that $\coChev_{\coFil}$ and $\coChev_{\gr}$ satisfy the $\stab$ and $\decay$ conditions respectively, it suffices to show that $\coChev_{\gr}$ satisfies the $\decay$ condition. However, this is also a direct consequence of the fact that the shriek-pullback functor is left exact and $\matheur{C}^{>n}$ is preserved under limits (for any stable infinity category $\matheur{C}$ with a $t$-structure). Altogether, we have thus proved that all functors in the diagram~\eqref{eq:addCoFil_trick_main_diagram} above land in the correct categories.

\subsection{Commutative diagrams} We will now proceed to prove that the diagram~\eqref{eq:addCoFil_trick_main_diagram} commutes. First note that we have just settled the commutativity of the middle diagram of~\eqref{eq:addCoFil_trick_main_diagram} at the end of the previous subsection.

\subsubsection{} The commutativity of the bottom diagram of~\eqref{eq:addCoFil_trick_main_diagram} is clear if we know that $\prod$ is symmetric monoidal. However, by Lemma~\ref{lem:direct_sum_and_products_are_the_same_Ran}, we have
\[
	\prod \simeq \bigoplus
\]
and we know that $\bigoplus$ is symmetric monoidal.

\subsubsection{} Finally, to show that the top diagram of~\eqref{eq:addCoFil_trick_main_diagram} commutes, it suffices to show that the following diagram commutes
\[
\xymatrix{
	\coLiestar(X)^{\geq 1} \ar[rr]^{\coChev} && \ComAlgstar(\Ran X)^{\geq 2} \\
	\coLiestar(X)^{\geq 1, \coFil^{>0}, \stab} \ar[u]^{\oblv_{\coFil}} \ar[rr]^{\coChev_{\coFil}} && \ComAlgstar(\Ran X)^{\geq 2, \coFil^{>0}, \stab} \ar[u]^{\oblv_{\coFil}}
} \teq\label{eq:coChev_and_oblv_coFil}
\]
since the composition
\[
	\coLiestar(X)^{\geq 1} \overset{\addCoFil}{\longrightarrow} \coLiestar(X)^{\geq 1, \coFil^{>0}, \stab} \overset{\oblv_{\coFil}}{\longrightarrow} \coLiestar(X)^{\geq 1}
\]
is the identity functor (see also \S\ref{sec:appendix:addFil_oblvFil_commutative}). However, this is clear since the functor
\[
	\oblv_{\coFil}: \Shv(\Ran X)^{\geq n, \coFil^{>0}, \stab} \to \Shv(\Ran X)^{\geq n}
\]
commutes with limits for any $n$, and moreover it is symmetric monoidal with respect to the $\otimesstar$-monoidal structure on $\Shv(\Ran X)$ by Lemma~\ref{lem:oblvCoFil_ass-gr_prod_are_monoidal}.

\subsection{Relation to factorizability} \label{subsec:factorizability_reflected} Using the fact that $\assgr$ is symmetric monoidal and is a conservative functor, it is easy to see that
\[
	\assgr: \ComAlgstar(\Ran X)^{\geq 2, \coFil^{>0}, \stab} \to \ComAlgstar(\Ran X)^{\geq 2, \gr^{>0}, \decay}
\]
reflects factorizability, namely, an object is factorizable if its image is. 

As we already discussed above, we have an equivalence of functors
\[
	\prod \simeq \bigoplus: \ComAlgstar(\Ran X)^{\geq 2, \gr^{>0}, \decay} \to \ComAlgstar(\Ran X)^{\geq 2}.
\]
But now it's clear that $\prod$ reflects factorizability, since $\bigoplus$ does.

Finally, since
\[
	\oblv_{\coFil}: \ComAlgstar(\Ran X)^{\geq 2, \coFil^{>0}, \stab} \to \ComAlgstar(\Ran X)^{\geq 2}
\]
is compatible with $\boxtimes$ (for the same reason that it is compatible with $\otimesstar)$, and moreover $(-)^!$ commutes with limits (being a right adjoint), we see easily that $\oblv_{\coFil}$ preserves factorizability. Thus, we conclude the proof of Theorem~\ref{thm:factorizability_coChev}.

\subsection{Relation to $\coLieshriek(X)$ and $\ComAlgshriek(X)$} In this subsection, we will discuss the various links between objects defined on $X$ such as $\coLieshriek(X)$ and $\ComAlgshriek(X)$ and objects defined on $\Ran X$ such as $\coLiestar(\Ran X)$, $\ComAlgstar(\Ran X)$ and $\Factstar(X)$. This subsection is not used anywhere in the paper. We include it here for the sake of completeness.

\subsubsection{} Recall that on a scheme $X$, there are two symmetric monoidal structures, $\otimes$ and $\otimesshriek$. Thus, we could talk about various algebra/co-algebra objects defined on it
\[
	\Lieast(X), \quad\coLieshriek(X),\quad \ComAlgshriek(X),
\]
where $\Lieast(X)$ (not to be confused with $\Liestar(X)$) is the category of Lie-algebra objects in $\Shv(X)$ with respect to the $\otimes$-monoidal structure, and $\coLieshriek(X)$ (resp. $\ComAlgshriek(X)$) is the category of coLie-algebra (resp. commutative algebra) objects in $\Shv(X)$ with respect to the $\otimesshriek$-monoidal structure.

\subsubsection{} The following observations are straightforward, and are both based on the fact that the functors
\[
	\ins_X^*: \Shv(\Ran X)^{\otimesstar} \to \Shv(X)^{\otimes} \qquad\text{and}\qquad \ins_X^!: \Shv(\Ran X)^{\otimesstar} \to \Shv(X)^{\otimesshriek}
\]
are symmetric monoidal, where
\[
	\ins_X: X \to \Ran X
\]
is the diagonal embedding.

\begin{lem}
We have a pair of adjoint functors
\[
	\ins_{X}^* :\Liestar(\Ran X) \rightleftarrows \Lieast(X): \ins_{X*}
\]
which induces an equivalence of categories
\[
	\Liestar(X) \simeq \Lieast(X),
\]
where the LHS denotes the full-subcategory of $\Liestar(\Ran X) = \Lie(\Shv(\Ran X)^{\otimesstar})$ consisting of $\Lie$-algebras whose underlying sheaves are supported on the diagonal $X$ of $\Ran X$. 
\end{lem}

\begin{lem}
We have a pair of adjoint functors
\[
	\ins_{X!}: \coLieshriek(X) \rightleftarrows \coLiestar(\Ran X): \ins_X^!
\]
which induces an equivalence of categories
\[
	\coLieshriek(X) \simeq \coLiestar(X),
\]
where the RHS denotes the full-subcategory of $\coLiestar(\Ran X) = \coLie(\Shv(\Ran X)^{\otimesstar})$ consisting of $\coLie$-coalgebras whose underlying sheaves are supported on the diagonal $X$ of $\Ran X$.
\end{lem}

\subsubsection{} We also have the following functor
\[
	\ins_X^!: \ComAlgstar(\Ran X) \to \ComAlgshriek(X)
\]
which commutes with limits. Thus, we get a pair of adjoint functors
\[
	\ins_{X?}: \ComAlgshriek(X) \rightleftarrows \ComAlgstar(\Ran X):\ins_X^!. \teq \label{eq:adjunction_ComAlg_X_RanX}
\]

We have the following result from~\cite[Thm. 5.6.4]{gaitsgory_weils_2014}.
\begin{thm} \label{thm:GL_equiv_cats_comalg}
The pair of adjoint functors in~\eqref{eq:adjunction_ComAlg_X_RanX} induces an equivalence of categories
\[
	\ComAlgshriek(X) \simeq \Factstar(X).
\]
\end{thm}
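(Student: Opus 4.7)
The plan is to prove the equivalence by restricting the adjunction~\eqref{eq:adjunction_ComAlg_X_RanX} to the subcategory $\Factstar(X)$ of the target, and then showing both directions factor through this subcategory and become mutually inverse. The guiding intuition is that a commutative factorization algebra on $\Ran X$ is rigidly determined by its restriction to the main diagonal $X$, because the factorization property spreads this datum out across the disjoint loci, while the commutative algebra structure on $\ins_X^!\matheur{B}$ (with respect to $\otimesshriek$, induced from the $\otimesstar$ commutative algebra structure on $\matheur{B}$) determines how to glue across deeper diagonals.

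First, I would give an explicit description of $\ins_{X?} A$ for $A \in \ComAlgshriek(X)$. Using that $\ins_X$ is pseudo-proper, one may compute $\ins_{X?} A$ stratum by stratum: on each locus $\oversetsupscript{X}{\circ}{I}$ of pairwise distinct points, $\ins_{X?} A|_{\oversetsupscript{X}{\circ}{I}}$ should be $(A \boxtimes \cdots \boxtimes A)|_{\oversetsupscript{X}{\circ}{I}}$, assembled over surjections $I \twoheadrightarrow J$ in a manner determined by the $\otimesshriek$-multiplication of $A$. With this explicit form, factorizability over disjoint loci (i.e.\ the condition~\eqref{eq:commutative_fac_map}) follows directly from base change of $\boxtimes$ along the inclusion $(\Ran X)^n_\disj \hookrightarrow (\Ran X)^n$. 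So $\ins_{X?}$ restricts to a functor $\ComAlgshriek(X) \to \Factstar(X)$.

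Next I would verify the unit $A \to \ins_X^! \ins_{X?} A$ is an equivalence. Since $\ins_X: X \hookrightarrow \Ran X$ picks out the deepest diagonal stratum, all the other summands in the stratum decomposition contribute zero after $\ins_X^!$, and what remains recovers $A$ together with its $\otimesshriek$-algebra structure by construction. The main work is in the counit: for $\matheur{B} \in \Factstar(X)$, show $\ins_{X?} \ins_X^! \matheur{B} \to \matheur{B}$ is an equivalence. The strategy is to check this after $!$-restriction to each $X^I$, and then filter $X^I$ by the diagonal stratification indexed by surjections $I \twoheadrightarrow J$. On the open part of each stratum (where some coordinates collide and the rest are disjoint), factorizability~\eqref{eq:commutative_fac_map} expresses $\matheur{B}$ as the external product of its values on smaller diagonals, and those values on smaller diagonals are in turn controlled by the $\otimesshriek$-algebra structure of $\ins_X^! \matheur{B}$ on $X$. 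An induction on $|I|$ (or equivalently on the depth of the diagonal) then closes the argument.

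The hard part will be step three: making precise how factorizability plus the $\otimesshriek$-commutative algebra structure on $\ins_X^! \matheur{B}$ are jointly sufficient to reconstruct $\matheur{B}|_{X^I}$ over every diagonal. One must track the homotopy coherences, because the gluing data across the intersection of two diagonal strata must be shown to agree on the nose with what is produced by iterated multiplication on $\ins_X^! \matheur{B}$; this is essentially the statement that a commutative algebra structure on the diagonal, combined with disjointness-factorization off the diagonal, assembles into a fully coherent $\otimesstar$-commutative algebra. Everything else in the argument is formal manipulation of adjunctions and base change (using~\eqref{eq:etale_!_*_pullback_equiv} and~\eqref{eq:pushforward_!circ*_equiv_*}), but this coherent gluing step is where the real content lies.
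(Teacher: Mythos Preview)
The paper does not prove this theorem; it is quoted as a known result from \cite[Thm.~5.6.4]{gaitsgory_weils_2014}, so there is no in-paper proof to compare your proposal against. Your sketch is a reasonable outline of the standard argument (and is in the spirit of how the cited reference proceeds), but for the purposes of this paper the statement is treated as background input rather than something to be established here.
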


\subsubsection{} The first link between $\coLieshriek(X), \coLiestar(X), \ComAlgshriek(X), \ComAlgstar(\Ran X)$ and $\Factstar(X)$ is given by the following 

\begin{prop}
The following diagram commutes
\[
\xymatrix{
	\coLieshriek(X) \ar[d]_{\coChev} & \coLiestar(X) \ar[l]^\simeq_{\ins_X^!} \ar[d]_{\coChev} \\
	\ComAlgshriek(X) & \ComAlgstar(\Ran X) \ar[l]_{\ins_X^!}
} \teq\label{eq:coChev_commutes_uppershriek}
\]
\end{prop}
\begin{proof}
The result is straightforward due to the fact that $\ins_X^!$ commutes with limits and that it's monoidal.
\end{proof} 

\subsubsection{} The second link, and also the more interesting one, is given by the following
\begin{prop}
We have the following commutative diagram
\[
\xymatrix{
	\coLieshriek(X)^{\geq 1} \ar[d]_{\coChev} \ar[r]_\simeq^{\ins_{X!}} & \coLiestar(X)^{\geq 1} \ar[d]_{\coChev} \\
	\ComAlgshriek(X) \ar[r]^{\ins_{X?}} &  \Factstar(X)
}
\]
\end{prop}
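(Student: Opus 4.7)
The plan is to reduce the question about the (left-adjoint-type) functors $\ins_{X!}$ and $\ins_{X?}$ to the already-established compatibility~\eqref{eq:coChev_commutes_uppershriek} of $\coChev$ with the right adjoints $\ins_X^!$, using the equivalences provided by Theorem~\ref{thm:GL_equiv_cats_comalg} and the preceding lemma that identifies $\coLieshriek(X) \simeq \coLiestar(X)$.

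First I would observe that both of the composites
\[
\ins_{X?} \circ \coChev, \qquad \coChev \circ \ins_{X!}
\]
land inside $\Factstar(X) \subset \ComAlgstar(\Ran X)$: the first because the adjunction~\eqref{eq:adjunction_ComAlg_X_RanX} together with Theorem~\ref{thm:GL_equiv_cats_comalg} identifies $\ins_{X?}$ as the inverse of the equivalence $\ins_X^! \colon \Factstar(X) \xrightarrow{\simeq} \ComAlgshriek(X)$; the second by Theorem~\ref{thm:factorizability_coChev}, since for $\mathfrak{g} \in \coLieshriek(X)^{\geq 1}$ the object $\ins_{X!}\mathfrak{g}$ lives in $\coLiestar(X)^{\geq 1}$ (as $\ins_{X!}$ is $t$-exact on the heart-shift direction relevant here, and supports in the diagonal).

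Next I would construct the comparison map. The unit of the adjunction $\ins_{X!} \dashv \ins_X^!$ is an equivalence $\mathfrak{g} \xrightarrow{\simeq} \ins_X^! \ins_{X!} \mathfrak{g}$ (this is the equivalence $\coLieshriek(X) \simeq \coLiestar(X)$ of the preceding lemma applied to $\mathfrak{g}$). Applying $\coChev$ and using the commutativity of~\eqref{eq:coChev_commutes_uppershriek}, we obtain an equivalence in $\ComAlgshriek(X)$:
\[
\coChev(\mathfrak{g}) \;\xrightarrow{\simeq}\; \coChev(\ins_X^! \ins_{X!} \mathfrak{g}) \;\simeq\; \ins_X^! \coChev(\ins_{X!}\mathfrak{g}).
\]
By the $\ins_{X?} \dashv \ins_X^!$ adjunction~\eqref{eq:adjunction_ComAlg_X_RanX}, this transposes to a map
\[
\alpha_{\mathfrak{g}} \colon \ins_{X?} \coChev(\mathfrak{g}) \longrightarrow \coChev(\ins_{X!}\mathfrak{g})
\]
in $\ComAlgstar(\Ran X)$, natural in $\mathfrak{g}$.

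Finally I would show $\alpha_{\mathfrak{g}}$ is an equivalence. Since both source and target of $\alpha_{\mathfrak{g}}$ lie in $\Factstar(X)$, and since by Theorem~\ref{thm:GL_equiv_cats_comalg} the functor $\ins_X^! \colon \Factstar(X) \to \ComAlgshriek(X)$ is an equivalence, it suffices to check that $\ins_X^!(\alpha_{\mathfrak{g}})$ is an equivalence. By construction, $\ins_X^!(\alpha_{\mathfrak{g}})$ is precisely the equivalence displayed above, so we are done. The main subtlety—and the only reason this is not completely formal—is that $\coChev$ is a limit and so does not automatically commute with the left adjoint $\ins_{X!}$; the point is that the co-connectivity hypothesis $\mathfrak{g} \in \coLieshriek(X)^{\geq 1}$ is exactly what Theorem~\ref{thm:factorizability_coChev} needs in order to land $\coChev(\ins_{X!}\mathfrak{g})$ in $\Factstar(X)$, which is what lets us invoke the equivalence of Theorem~\ref{thm:GL_equiv_cats_comalg} to reduce the question back to the already-known compatibility between $\coChev$ and $\ins_X^!$.
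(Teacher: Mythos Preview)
Your proposal is correct and follows essentially the same approach as the paper: construct the comparison map via the adjunctions, observe that both sides lie in $\Factstar(X)$ (using Theorem~\ref{thm:GL_equiv_cats_comalg} for the left side and Theorem~\ref{thm:factorizability_coChev} for the right), and then reduce to checking the equivalence after applying $\ins_X^!$, where it follows from~\eqref{eq:coChev_commutes_uppershriek}. Your write-up is in fact more explicit than the paper's about how the natural map is built.
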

\begin{proof}
By adjunction, for any $\mathfrak{g} \in \coLieshriek(X)$, we have a natural map
\[
	\ins_{X?} \circ \coChev \to \coChev \circ \ins_{X!}
\]
between objects in $\ComAlgstar(\Ran X)$. Now, we know from Theorem~\ref{thm:GL_equiv_cats_comalg} that the LHS is factorizable. Moreover, when $\mathfrak{g}\in \coLieshriek(X)^{\geq 1}$, we know from Theorem~\ref{thm:factorizability_coChev} that the RHS is also factorizable. Thus, to show that the map above is an equivalence when $\mathfrak{g} \in \coLieshriek(X)^{\geq 1}$, it suffices to show that they are equivalent on the diagonal. However, that is clear from~\eqref{eq:coChev_commutes_uppershriek} and we are done.
\end{proof}

\section{Interactions between various functors on the Ran space}
In this section, we investigate how the various functors operating on sheaves on the $\Ran$ spaces interact with each other. The highlights are Theorem~\ref{thm:coChev_and_C^*_c(Ran)}, which says that $\coChev$ is compatible with $C^*_c(\Ran X, -)$ under some co-connectivity assumption, and Theorem~\ref{thm:Chev_coChev_and_D_Ran} which shows how the functor of taking Koszul duality exchanges $\coChev$ and $\Chev$ under some connectivity assumption.

\subsection{$C^*_c(\Ran X, -)$ and $\coChev$}
In this subsection, we will prove Theorem~\ref{thm:intro:coChev_and_C^*_c(Ran)}, which gives us a criterion for the commutativity of the functor $\coChev$ and the functor $C^*_c(\Ran X, -)$. Note that it has been proved in~\cite{francis_chiral_2011} that $\Chev$ always commutes with $C^*_c(\Ran X, -)$. The main reason is that $C^*_c(\Ran X, -)$ is continuous and monoidal with respect to the $\otimesstar$-monoidal structure on $\Shv(\Ran X)$ and the usual monoidal structure on $\Vect$. As before, our main difficulty comes from the fact that $\coChev$ is defined as a limit, and for that to behave well with respect to $C^*_c(\Ran X, -)$, we need to impose a certain co-connectivity assumption.

\subsubsection{} Throughout this subsection, $X$ will be assumed to be a proper scheme of pure dimension $d$.

\begin{thm} \label{thm:coChev_and_C^*_c(Ran)}
For any $\mathfrak{g} \in \coLiestar(X)^{\geq d+1}$, the natural map
\[
	C^*_c(\Ran X, \coChev \mathfrak{g}) \to \coChev(C^*_c(X, \mathfrak{g}))
\]
is an equivalence.\footnote{Since $\Supp \mathfrak{g} \subset X \subset \Ran X$, we have $C^*_c(\Ran X, \mathfrak{g}) \simeq C^*_c(X, \mathfrak{g})$}.
\end{thm}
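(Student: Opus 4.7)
The plan is to parallel the strategy of Theorem~\ref{thm:factorizability_coChev}: reduce to the case of a trivial $\coLie$-algebra via the $\addCoFil$ trick, and then verify the natural map
\[
	\phi_\mathfrak{g}: C^*_c(\Ran X, \coChev \mathfrak{g}) \longrightarrow \coChev(C^*_c(X, \mathfrak{g}))
\]
directly using continuity and symmetric monoidality of $C^*_c(\Ran X, -)$. The role of the bound $\geq 1+d$ in the hypothesis is to ensure that $C^*_c(X, \mathfrak{g}) \in \Vect^{\geq 1}$, since for $X$ proper of pure dimension $d$ the functor $C^*_c(X, -) = p_*$ is left $t$-exact up to a shift of $d$. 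The range $\Vect^{\geq 1}$ is precisely where the $\Vect$-case analysis of \S\ref{subsec:Koszul_equivalence_Lie_ComCoAlg_in_Vect} applies: for $V \in \Vect^{\geq 1}$ one has $\Sym^n(V[-1]) \in \Vect^{\geq 2n}$, so the tower presenting $\coChev V = \prod_n \Sym^n(V[-1])$ is decaying and the product coincides with the direct sum.

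Next, I would build an enriched version of diagram~\eqref{eq:addCoFil_trick_main_diagram} by adjoining to it a column corresponding to $C^*_c(\Ran X, -)$ landing in $\Vect$, together with the parallel column for the composition $\coChev \circ C^*_c(X, -)$ and the natural transformation $\phi$ between them. The new commutation checks needed to push $\phi$ through the diagram are all consequences of the continuity and $(\otimesstar, \otimes)$-symmetric monoidality of $C^*_c(\Ran X, -)$: it commutes with $\oblv_{\coFil}$, $\assgr$, and---by Lemma~\ref{lem:direct_sum_and_products_are_the_same_Ran}---with $\prod$ on decaying sequences. Preservation of the stabilization/decay properties in the enriched diagram follows because $C^*_c(X^I, -)$ shifts the perverse degree by at most $-d|I|$, while the connectivity bounds on the graded pieces of interest grow strictly in the grading index. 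An induction mirroring that of \S3.4--\S3.5 then reduces the claim to the case where $\mathfrak{g}$ has trivial $\coLie$-structure.

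In the trivial case, $\coChev \mathfrak{g} \simeq \Sym^{>0}(\mathfrak{g}[-1])$ with the $\otimesstar$-Sym in $\Shv(\Ran X)$, and $\coChev(C^*_c(X, \mathfrak{g})) \simeq \Sym^{>0}(C^*_c(X, \mathfrak{g})[-1])$ with the usual Sym in $\Vect$. A calculation modelled on the proof of Lemma~\ref{lem:otimesstar_preserves_cL_ccA}, combined with $\mathfrak{g}[-1] \in \Shv(X)^{\geq 2+d}$, yields
\[
	\Sym^n(\mathfrak{g}[-1])|_{\oversetsupscript{X}{\circ}{I}} \in \Shv(\oversetsupscript{X}{\circ}{I})^{\geq (2+2d)n - d|I|},
\]
so $\{\Sym^n\}_n$ is decaying on $\Ran X$ and equals its direct sum by Lemma~\ref{lem:direct_sum_and_products_are_the_same_Ran}. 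Applying $C^*_c(\Ran X, -)$, which is continuous and symmetric monoidal, gives $\bigoplus_n \Sym^n(C^*_c(X, \mathfrak{g})[-1])$ for the left-hand side of $\phi$, while the right-hand side equals $\prod_n \Sym^n(C^*_c(X, \mathfrak{g})[-1]) \simeq \bigoplus_n \Sym^n(C^*_c(X, \mathfrak{g})[-1])$ by the $\Vect$-analog of the decay lemma, applied to $C^*_c(X, \mathfrak{g}) \in \Vect^{\geq 1}$. The two descriptions are identified term by term via the symmetric monoidality of $C^*_c(\Ran X, -)$.

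The main obstacle I anticipate is the careful bookkeeping of perverse-degree shifts in the enriched diagram, and verifying that $C^*_c(\Ran X, -)$---which is defined as a colimit---behaves compatibly with the limits implicit in $\coChev$ and in the stabilizing co-filtrations. The core numerical input is the Artin-shift estimate $-d|I|$ for $C^*_c(X^I, -)$, which is exactly compensated by the extra $+d$ in the hypothesis $\geq 1+d$.
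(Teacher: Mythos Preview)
Your strategy has a genuine gap at the point where you claim that $C^*_c(\Ran X,-)$ commutes with $\oblv_{\coFil}$. You attribute this to ``continuity and $(\otimesstar,\otimes)$-symmetric monoidality,'' but $\oblv_{\coFil}$ is a \emph{limit}, not a colimit, so continuity is of no help. This commutation---namely
\[
  C^*_c(\Ran X,\ \lim_i \coChev^i\mathfrak{g}) \;\simeq\; \lim_i\, C^*_c(\Ran X,\ \coChev^i\mathfrak{g})
\]
---is in fact the entire content of the theorem, and the paper proves it directly rather than as one step in a reduction to the trivial case. The degree estimates you cite are necessary but not sufficient: the paper's argument also uses that $\Supp\coChev^i\mathfrak{g}\subset\Ran^{\leq i}X$ and, crucially, that $X$ is \emph{proper}, so that $C^*_c(X^I,-)\simeq C^*(X^I,-)$ commutes with limits and hence so does $C^*_c(\Ran^{\leq M}X,-)$ for each finite $M$. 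One then truncates at each cohomological degree $m$, uses the connectivity estimates (your Artin-shift bound together with Lemma~\ref{lem:difference_in_XcircI}) to replace $\Ran X$ by $\Ran^{\leq M}X$ for $M\gg0$, and only then interchanges $C^*_c$ with $\lim_i$.

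Once this interchange is established, the rest of your reduction is superfluous: since each $\coChev^i$ is a \emph{finite} limit, exactness of $C^*_c(\Ran X,-)$ already gives $C^*_c(\Ran X,\coChev^i\mathfrak{g})\simeq\coChev^i\bigl(C^*_c(X,\mathfrak{g})\bigr)$, and passing to the limit finishes the proof. So the enriched diagram and the trivial-case computation are not needed. Your closing paragraph correctly identifies the obstacle, but the resolution you propose (numerical bookkeeping alone) does not close the gap without the properness input.
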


After some preparation, the actual proof of the theorem will be carried out in~\S\ref{subsubsec:proof_of_coChev_and_C^*_c}. We start with the following elementary lemma whose proof is immediate.

\begin{lem} \label{lem:lim_vs_colim_elementary}
Let $F: \mathbb{N} \times \mathbb{N}^{\op} \to \matheur{C}$ be a functor. Assume that there exists $N \in \mathbb{N}$ such that for all $i, j > N$, the following maps
\[
	F(i, j) \to F(i+1, j) \qquad\text{and}\qquad F(i, j) \to F(i, j-1)
\]
are equivalences, i.e. $F|_{\mathbb{N}_{>N} \times \mathbb{N}_{>N}^{\op}}$ factors through the maximal sub-groupoid of $\matheur{C}$. Then
\[
	\colim_{i\in \mathbb{N}} \lim_{j \in \mathbb{N}^\op} F(i, j) \simeq \lim_{j\in \mathbb{N}^\op} \colim_{i\in \mathbb{N}} F(i, j) \simeq F(N, N),
\]
assuming that all limits and colimits exist.
\end{lem}

\begin{cor} \label{cor:lim_vs_colim}
Let $\matheur{C}$ be a stable $\infty$-category equipped with a right-separated $t$-structure and assume also that filtered colimits are exact with respect to the $t$-structure. Let
\[
	F: \mathbb{N} \times \mathbb{N}^\op \to \matheur{C}
\]
such that for any $c$, the functor $\tr_{<c} \circ F$ satisfies the conditions of Lemma~\ref{lem:lim_vs_colim_elementary}. Then
\[
	\colim_{i\in \mathbb{N}} \lim_{j \in \mathbb{N}^\op} F(i, j) \simeq \lim_{j\in \mathbb{N}^\op} \colim_{i\in \mathbb{N}} F(i, j),
\]
assuming that all limits and colimits make sense.
\end{cor}
\begin{proof}
The separatedness condition implies that it suffices to prove that for each integer $c$, the following map is an equivalence
\[
	\tr_{<c} \colim_{i\in \mathbb{N}} \lim_{j \in \mathbb{N}^\op} F(i, j) \simeq \tr_{<c} \lim_{j\in \mathbb{N}^\op} \colim_{i\in \mathbb{N}} F(i, j).
\]
Commuting $\tr_{<c}$ pass the colimit and limit, the equivalence is a direct consequence of Lemma~\ref{lem:lim_vs_colim_elementary} above. Note that here, we only use the exactness of filter colimits ($\tr_{<0}$ commutes with limits since it's a right adjoint).
\end{proof}

We will apply the discussion above to the situation at hand. 

\subsubsection{Truncated Ran space} For any scheme $X$ and any positive integer $n$, we define
\[
	\Ran^{\leq n} X \simeq \colim_{\substack{I \in \fSet^{\surj} \\ |I| \leq n}} X^I.
\]
Then
\[
	\Ran X \simeq \colim \Ran^{\leq n} X \simeq \colim (X \to \Ran^{\leq 2} X \to \Ran^{\leq 3} X \to \cdots),
\]
and hence, for any $\matheur{F} \in \Shv(\Ran X)$,
\[
	C^*_c(\Ran X, \matheur{F}) \simeq \colim_n C^*_c(\Ran^{\leq n} X, \matheur{F}|_{\Ran^{\leq n}  X}).
\]

The following observation, which gives the link among the cohomology groups
\[
	C^*_c(\Ran^{\leq n}X, \matheur{F}|_{\Ran^{\leq n}X})
\]
for various $n$'s, comes from~\cite[Cor. 9.1.4]{gaitsgory_atiyah-bott_2015}.
\begin{lem} \label{lem:difference_in_XcircI}
We have the following natural equivalence
\[
	C^*(\oversetsupscript{X}{\circ}{I}, \matheur{F}|_{\oversetsupscript{X}{\circ}{I}})_{\Sigma_I} \simeq \coFib(C^*_c(\Ran^{\leq |I|-1} X, \matheur{F}|_{\Ran^{\leq |I|-1} X}) \to C^*_c(\Ran^{\leq |I|} X, \matheur{F}|_{\Ran^{\leq |I|} X})).
\]
\end{lem}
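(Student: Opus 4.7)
The plan is to identify the cofiber with the cohomology of the ``top stratum'' of $\Ran^{\leq |I|}X$, i.e.\ the open substack $\matheur{U}$ parametrising configurations of cardinality exactly $|I|$. As a prestack, $\matheur{U}$ is the quotient $\oversetsupscript{X}{\circ}{I}/\Sigma_I$ by the free action of the symmetric group.

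First I would apply the standard open/closed localization triangle on $\Ran^{\leq |I|}X$. Let $i:\Ran^{\leq |I|-1}X\hookrightarrow \Ran^{\leq |I|}X$ be the pseudo-closed inclusion, and let $j:\matheur{U}\hookrightarrow \Ran^{\leq |I|}X$ be the complementary open embedding. The localization sequence is
\[
	i_*i^!\matheur{F}\to \matheur{F}\to j_*j^!\matheur{F}
\]
(with $j^!\simeq j^*$ by~\eqref{eq:etale_!_*_pullback_equiv}). Pushing forward via $s_!$ and noting $i_*\simeq i_!$ for $i$ pseudo-closed (so that $s_!i_*i^!\matheur{F}\simeq C^*_c(\Ran^{\leq |I|-1}X, \matheur{F}|_{\Ran^{\leq |I|-1}X})$), the first two terms become the compactly-supported cohomologies appearing in the statement. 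We are thus reduced to identifying $s_!(j_*j^!\matheur{F})$ with $C^*(\oversetsupscript{X}{\circ}{I}, \matheur{F}|_{\oversetsupscript{X}{\circ}{I}})_{\Sigma_I}$.

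For this identification, I would use the colimit presentation $\Ran^{\leq |I|}X\simeq \colim_{|J|\leq |I|} X^J$ together with base change. For $|J|<|I|$, the preimage of $\matheur{U}$ under $\ins_J:X^J\to \Ran^{\leq |I|}X$ is empty, so the contribution of such $J$ is zero. For $|J|=|I|$, the preimage of $\matheur{U}$ is precisely $\oversetsupscript{X}{\circ}{J}$, an open subscheme of the proper $X^J$, and base change (valid because $j$ is schematic and open) identifies $\ins_J^!(j_*j^!\matheur{F})$ with the $\ast$-pushforward of $\matheur{F}|_{\oversetsupscript{X}{\circ}{J}}$ along $\oversetsupscript{X}{\circ}{J}\hookrightarrow X^J$. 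Since $X^J$ is proper, applying $(s_{X^J})_!$ computes $C^*(\oversetsupscript{X}{\circ}{J}, \matheur{F}|_{\oversetsupscript{X}{\circ}{J}})$. The remaining colimit is over the groupoid of finite sets of cardinality $|I|$ and bijections, which has a single isomorphism class with automorphism group $\Sigma_I$; the colimit therefore collapses to the $\Sigma_I$-coinvariants of $C^*(\oversetsupscript{X}{\circ}{I}, \matheur{F}|_{\oversetsupscript{X}{\circ}{I}})$, as desired.

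The main obstacle is verifying the base change step for $j_*$ along the projections $\ins_J$ in this pseudo-proper setting, and then recognising the resulting colimit over $|J|=|I|$ as the $\Sigma_I$-coinvariants. The former reduces to the fact that $j$ is an open schematic embedding, for which $j_*$ commutes with $(-)^!$-pullback; the latter is the standard orbit computation for the symmetric groupoid of finite sets of fixed size.
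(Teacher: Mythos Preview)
The paper does not supply its own proof of this lemma; it merely records it as an observation taken from \cite[Cor.~9.1.4]{gaitsgory_atiyah-bott_2015}. So there is nothing in the present paper to compare against, and your sketch is in effect filling in what the paper outsources.

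Your argument is essentially correct and follows the natural line: stratify $\Ran^{\leq |I|}X$ by cardinality, recognise the open stratum as $\oversetsupscript{X}{\circ}{I}/\Sigma_I$, and use properness of $X^I$ to trade $C^*_c$ for $C^*$. The computation in your second paragraph---pulling $j_*j^!\matheur{F}$ back along each $\ins_J$ and collapsing the residual colimit over $|J|=|I|$ to $\Sigma_I$-coinvariants---is exactly right, and the properness hypothesis on $X$ (in force throughout this subsection) is what makes the last step go through.

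One point deserves a bit more care. You invoke the localization triangle $i_!i^!\matheur{F}\to\matheur{F}\to j_*j^!\matheur{F}$ on the prestack $\Ran^{\leq |I|}X$ as if it were automatic. On schemes it is, but for pseudo-schemes it is not given for free by the formalism in \S\ref{sec:prelims}; one has to check that the cofiber of $i_!i^!\matheur{F}\to\matheur{F}$ really is $j_*j^!\matheur{F}$. This can be done by applying $\ins_J^!$ for each $J$ and using base change for the pseudo-proper $i$ and the schematic open $j$, together with the ordinary localization triangle on each $X^J$ (where the relevant closed subscheme is the big diagonal). Alternatively, and perhaps more in the spirit of the paper, you can bypass the triangle altogether: compute both $C^*_c(\Ran^{\leq |I|}X,\matheur{F})$ and $C^*_c(\Ran^{\leq |I|-1}X,\matheur{F})$ directly as colimits over $(\fSet^{\surj}_{\leq |I|})^{\op}$ and its full subcategory on $|J|\leq |I|-1$, and identify the cofiber of the inclusion of colimits; the answer again reduces to the groupoid of sets of size $|I|$ and gives the same coinvariants. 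Either way the gap is small, but it should be acknowledged.
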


\subsubsection{$\coChev$ as a sequential limit} When
\[
	\mathfrak{g} \in \coLiestar(X)^{\geq d+1},
\]
using the $\addCoFil$ trick~\eqref{eq:addCoFil_trick_main_diagram}, we can also express $\coChev \mathfrak{g}$ as a sequential limit
\[
	\coChev \mathfrak{g} \simeq \oblv_{\coFil}\coChev_{\coFil} \addCoFil \mathfrak{g} \simeq \lim_i (\coChev_{\coFil} \addCoFil \mathfrak{g})_i.
\]
Where $(\coChev_{\coFil} \addCoFil \mathfrak{g})_i$ is the $i$-th step in the co-filtration, and moreover
\[
	\Fib((\coChev_{\coFil} \addCoFil \mathfrak{g})_i \to (\coChev_{\coFil} \addCoFil \mathfrak{g})_{i-1}) \simeq \Sym^i (\mathfrak{g}[-1]),
\]
where $\Sym$ is formed using the $\otimesstar$-monoidal structure on $\Shv(\Ran X)$.

\subsubsection{} For brevity's sake, we will denote
\[
	\coChev^i \mathfrak{g} = (\coChev_{\coFil} \addCoFil \mathfrak{g})_i
\]
and so we have
\[
	\coChev \mathfrak{g} \simeq \lim_i \coChev^i \mathfrak{g}
\]
and
\[
	\Fib(\coChev^i \mathfrak{g} \to \coChev^{i-1}\mathfrak{g}) \simeq \Sym^i (\mathfrak{g}[-1]), \teq\label{eq:fib_coChev_Sym}
\]
where $\Sym$ is formed using the $\otimesstar$-monoidal structure on $\Shv(\Ran X)$.

\subsubsection{} For $\mathfrak{g} \in \coLiestar(X)^{\geq d+1}$, consider the following functor
\begin{align*}
	F: \mathbb{N} \times \mathbb{N}^\op &\to \Vect \teq\label{eq:The_F_C_Ran_coChev} \\
	(i, j) &\mapsto C^*_c(\Ran^{\leq i} X, \coChev^j \mathfrak{g}) \simeq \coChev^j C^*_c(X, \mathfrak{g})
\end{align*}
where the equivalence on the second line is due to the fact that $\coChev^j$ is computed as a finite limit for each $j$.

The goal now is to show that $F$ satisfies the conditions stated in Corollary~\ref{cor:lim_vs_colim}. We start with a couple of cohomological estimates.

\begin{lem} \label{lem:estimates_coChev_and_Sym_on_Ran}
For any $\mathfrak{g} \in \coLiestar(X)^{\geq d+1}$ and any non-negative integer $i$,
\[
	\Supp \coChev^i \mathfrak{g} \subset \Ran^{\leq i} X
\]
and for all non-empty finite set $I$ such that $|I| \leq i$, $(\Sym^i (\mathfrak{g}[-1]))|_{\oversetsupscript{X}{\circ}{I}}$ lives in perverse cohomological degrees $\geq i(d+2)$.
\end{lem}
\begin{proof}
This follows directly from~\eqref{eq:otimesstar_explicit_formula} and the fact that $!$-pullbacks are left exact with respect to the perverse $t$-structure.
\end{proof}

\begin{cor} \label{cor:estimates_C^*_XI_coChevj}
For any $\mathfrak{g} \in \coLiestar(X)^{\geq d+1}$, any non-empty finite set $I$, and any positive integer $j$, $(\coChev^j \mathfrak{g})|_{\oversetsupscript{X}{\circ}{I}}$ lives in perverse cohomological degrees $\geq |I|(d+2)$. In particular,
\[
	C^*(\oversetsupscript{X}{\circ}{I}, (\coChev^j\mathfrak{g})|_{\oversetsupscript{X}{\circ}{I}})_{\Sigma_I}
\]
lives in cohomological degrees $\geq 2|I|$.
\end{cor}
\begin{proof}
Since $C^*(\oversetsupscript{X}{\circ}{I}, -)[-d|I|]$ is $t$-left exact, the second statement follows from the first. Now, when $j < |I|$, then there is nothing to prove since everything vanishes. For $j \geq |I|$, we have the following sequence of sheaves
\[
	\coChev^j \mathfrak{g}|_{\oversetsupscript{X}{\circ}{I}} \to \coChev^{j-1} \mathfrak{g}|_{\oversetsupscript{X}{\circ}{I}} \to \cdots \to \coChev^{|I|} \mathfrak{g}|_{\oversetsupscript{X}{\circ}{I}} \to \coChev^{|I| - 1} \mathfrak{g}|_{\oversetsupscript{X}{\circ}{I}} \simeq 0.
\]
Inducting on $k \in \{|I|, \dots, j\}$, using the fact that the $k$-th fiber of this sequence is $\Sym^k (\mathfrak{g}[-1])|_{\oversetsupscript{X}{\circ}{I}}$ (see~\eqref{eq:fib_coChev_Sym}) and the estimates in Lemma~\ref{lem:estimates_coChev_and_Sym_on_Ran} concludes the proof.
\end{proof}

\begin{lem} \label{lem:estimates_C^*_Ran_leq_i_coChevj}
For any $\mathfrak{g} \in \coLiestar(X)^{\geq d+1}$ and any pair of positive integers $i, j$,
\[
	C^*_c(\Ran^{\leq i} X, \Sym^j (\mathfrak{g}[-1])|_{\Ran^{\leq i} X})
\]
lives in cohomological degrees $\geq 2j$.
\end{lem}
\begin{proof}
Consider the following sequence of chain complexes
\[
	C^*_c(X, \Sym^j (\mathfrak{g}[-1])|_{X}) \to C^*_c(\Ran^{\leq 2} X, \Sym^j (\mathfrak{g}[-1])|_{\Ran^{\leq 2} X}) \to \dots \to C^*_c(\Ran^{\leq i} X, \Sym^j(\mathfrak{g}[-1])|_{\Ran^{\leq i} X}),
\]
with the $k$-th co-fiber being
\[
	C^*(\oversetsupscript{X}{\circ}{k}, \Sym^j(\mathfrak{g}[-1])|_{\oversetsupscript{X}{\circ}{k}})_{\Sigma_k}, \quad k\in \{1, \dots, i\}
\]
by Lemma~\ref{lem:difference_in_XcircI}.\footnote{Since $X$ is assumed to be proper throughout this subsection, our statement is valid also for the case $k=1$.} By Lemma~\ref{lem:estimates_coChev_and_Sym_on_Ran}, we see that this chain complex lives in cohomological degrees 
$\geq j(d+2)-kd$ when $k\leq j$ and vanishes otherwise. Thus, in particular, it lives in cohomological degrees $\geq 2j$. Inducting on $k \in \{1, \dots, i\}$, we conclude the proof.
\end{proof}

\begin{prop} \label{prop:commuting_limit_colim_C_Ran_Chev}
When $\mathfrak{g} \in \coLiestar(X)^{\geq d+1}$, the functor $F$ considered at~\eqref{eq:The_F_C_Ran_coChev} satisfies the conditions stated in Corollary~\ref{cor:lim_vs_colim}. In particular, we have a natural equivalence
\[
	\colim_i \lim_j C^*_c(\Ran^{\leq i} X, \coChev^j \mathfrak{g}|_{\Ran^{\leq i} X}) \simeq \lim_j \colim_i C^*_c(\Ran^{\leq i} X, \coChev^j \mathfrak{g}|_{\Ran^{\leq i} X}).
\]
\end{prop}
\begin{proof}
This is a direct consequence of Corollary~\ref{cor:estimates_C^*_XI_coChevj} and Lemma~\ref{lem:estimates_C^*_Ran_leq_i_coChevj}.
\end{proof}

\subsubsection{}\label{subsubsec:proof_of_coChev_and_C^*_c} With these observations, we are ready for the proof of Theorem~\ref{thm:coChev_and_C^*_c(Ran)}.
\begin{proof}[Proof of Theorem~\ref{thm:coChev_and_C^*_c(Ran)}] 	
We have
\begin{align*}
C^*_c(\Ran X, \coChev \mathfrak{g}) 
&\simeq \colim_i C^*_c(\Ran^{\leq i} X, \lim_i \coChev^j \mathfrak{g}|_{\Ran^{\leq i} X}) \\
&\simeq \colim_i \lim_j C^*_c(\Ran^{\leq i} X, \coChev^j \mathfrak{g}|_{\Ran^{\leq i}X}) 
\teq\label{eq:lim_j_C(Ran<=i)} \\
&\simeq \lim_j \colim_i C^*_c(\Ran^{\leq i} X, \coChev^j \mathfrak{g}|_{\Ran^{\leq i}X})
\teq\label{eq:commuting_limit_colimit}\\
&\simeq \lim_j C^*_c(\Ran X, \coChev^j \mathfrak{g}) \\
&\simeq \lim_j \coChev^j C^*_c(X, \mathfrak{g}) 
\teq\label{eq:commute_coChevj_C(Ran)}\\
&\simeq \coChev C^*_c(X, \mathfrak{g}).
\teq\label{eq:add_coFil_pt}
\end{align*}
Here,~\eqref{eq:lim_j_C(Ran<=i)} is due to the fact that $C^*_c(\Ran^{\leq i} X, -)$ is a finite colimit of functors of the form $C^*_c(X^I, -)$, each of which commutes with limits since $X$ is proper. Moreover,~\eqref{eq:commuting_limit_colimit} is due to~Proposition~\ref{prop:commuting_limit_colim_C_Ran_Chev} and~\eqref{eq:commute_coChevj_C(Ran)} is due to the fact that $\coChev^j$ is a finite limit and $\mathfrak{g}$ is supported only on $X$. Finally,~\eqref{eq:add_coFil_pt} is obtained by applying the $\addCoFil$ trick to the case of $\Vect$.
\end{proof}

\begin{rmk}
In the last step~\eqref{eq:add_coFil_pt}, we need $\mathfrak{g}$ to live in perverse cohomological degrees $\geq d + 1$ so that $C^*_c(X, \mathfrak{g}) \simeq C^*(X, \mathfrak{g})$ lives in cohomological degrees $\geq 1$, which is needed to apply the $\addCoFil$ trick. Here, $X=\pt$ in ~\eqref{eq:addCoFil_trick_main_diagram}.
\end{rmk}

\subsection{Verdier duality}
Before studying the link between $\Chev$ and $\coChev$, we start with a quick recollection of Verdier duality on prestacks along with various useful properties. The main reference is~\cite{gaitsgory_atiyah-bott_2015}. We only use the very basic properties of $D_{\Ran}$.

\subsubsection{} Let $\matheur{Y}$ be a prestack such that the diagonal map
\[
	\diag_{\matheur{Y}}: \matheur{Y} \to \matheur{Y} \times \matheur{Y}
\]
is pseudo-proper. For $\matheur{F}, \matheur{G} \in \Shv(\matheur{Y})$, by a pairing between them, we shall mean a map
\[
	\matheur{F}\boxtimes \matheur{G} \to \diag_{\matheur{Y}!} \omega_{\matheur{Y}}.
\]
We define the Verdier dual $D_\matheur{Y} \matheur{G}$ of $\matheur{G}$ to be the object representing the functor
\[
	\matheur{F} \mapsto \Hom(\matheur{F} \boxtimes \matheur{G}, \diag_{\matheur{Y}!} \omega_{\matheur{Y}}).
\]
Namely, we have the following natural equivalence
\[
	\Hom(\matheur{F}, D_{\matheur{Y}} \matheur{G}) \simeq \Hom(\matheur{F} \boxtimes \matheur{G}, \diag_{\matheur{Y}!} \omega_{\matheur{Y}}).
\]

The following lemma is immediate from the definition.
\begin{lem} \label{lem:D_turns_colimits_to_limits}
Let $\matheur{F} \in \Shv(\matheur{Y})$, such that
\[
	\matheur{F} \simeq \colim_{i \in \matheur{I}} \matheur{F}_i.
\]
Then
\[
	D_{\matheur{Y}} \matheur{F} \simeq \lim_{i \in \matheur{I}^{\op}} D_{\matheur{Y}} \matheur{F}_i.
\]
\end{lem}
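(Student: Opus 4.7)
The plan is to invoke Yoneda, reducing the claim to the fact that the external product $\boxtimes$ is continuous in each variable. I would proceed as follows.

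First, by the definition of Verdier duality, for any test sheaf $\matheur{H} \in \Shv(\matheur{Y})$ we have
\[
\Hom(\matheur{H}, D_{\matheur{Y}}\matheur{F}) \simeq \Hom(\matheur{H} \boxtimes \matheur{F}, \diag_{\matheur{Y}!}\omega_{\matheur{Y}}).
\]
Substituting $\matheur{F} \simeq \colim_{i \in \matheur{I}} \matheur{F}_i$ and using that $\boxtimes$ preserves colimits in the second variable gives
\[
\matheur{H} \boxtimes \matheur{F} \simeq \colim_{i \in \matheur{I}} (\matheur{H} \boxtimes \matheur{F}_i).
\]
Since $\Hom(-, \diag_{\matheur{Y}!}\omega_{\matheur{Y}})$ turns colimits into limits, and applying the defining property of $D_{\matheur{Y}}\matheur{F}_i$ term by term, we obtain a chain of natural equivalences
\[
\Hom(\matheur{H}, D_{\matheur{Y}}\matheur{F}) \simeq \lim_{i \in \matheur{I}^{\op}} \Hom(\matheur{H} \boxtimes \matheur{F}_i, \diag_{\matheur{Y}!}\omega_{\matheur{Y}}) \simeq \lim_{i \in \matheur{I}^{\op}} \Hom(\matheur{H}, D_{\matheur{Y}}\matheur{F}_i).
\]
Pulling the limit out of the second slot of $\Hom$ (which is automatic, since $\Hom(\matheur{H}, -)$ commutes with limits), the right-hand side identifies with $\Hom(\matheur{H}, \lim_{i \in \matheur{I}^{\op}} D_{\matheur{Y}}\matheur{F}_i)$. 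Since this holds naturally for all $\matheur{H}$, Yoneda gives the desired equivalence $D_{\matheur{Y}}\matheur{F} \simeq \lim_{i \in \matheur{I}^{\op}} D_{\matheur{Y}}\matheur{F}_i$.

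The only non-formal input is the continuity of $\boxtimes$ in each variable. This reduces to the case of schemes by testing on pull-backs $(f_1 \times f_2)^!(\matheur{H} \boxtimes \matheur{F}) \simeq f_1^!\matheur{H} \boxtimes f_2^!\matheur{F}$ as recalled in the preliminaries, where continuity of the external product on $\Shv(S_1) \times \Shv(S_2) \to \Shv(S_1 \times S_2)$ is standard; equivalently, one can invoke the fact that $\Shv(\matheur{Y})$ is presentable and that $\matheur{H} \boxtimes (-)$ admits a right adjoint. This is the only step that really uses anything beyond the definition, and it is routine; the rest of the argument is pure Yoneda formalism, so I do not foresee any substantial obstacle.
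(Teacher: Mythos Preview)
Your argument is correct and is exactly what the paper means by ``immediate from the definition'': the Yoneda manipulation you wrote out is the standard unwinding, and the only non-formal input (continuity of $\boxtimes$ in each variable) is indeed routine in this setup. There is nothing to add.
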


\subsubsection{} We will now study the link between Verdier duality and $\boxtimes$.

\begin{prop}
\label{prop:boxtimes_finitary_pseudo-scheme}
Let $\matheur{Y}_1$ and $\matheur{Y}_2$ be finitary pseudo-schemes, and $\matheur{F}_i \in \Shv(\matheur{Y}_i)$ for $i\in \{1, 2\}$. Then, we have a natural equivalence
\[
	D_{\matheur{Y}_1} \matheur{F}_1 \boxtimes D_{\matheur{Y}_2} \matheur{F}_2 \simeq D_{\matheur{Y}_1 \times \matheur{Y}_2} (\matheur{F}_1 \boxtimes \matheur{F}_2).
\]
\end{prop}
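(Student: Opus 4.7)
The plan is to construct a canonical comparison map using the two tautological pairings and then to verify that it is an equivalence by reducing to the scheme case, where the statement is the classical \Kunneth{} formula for Verdier duality.

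To build the map, I would box-multiply the tautological pairings
\[
D_{\matheur{Y}_i}\matheur{F}_i \boxtimes \matheur{F}_i \to \diag_{\matheur{Y}_i!}\omega_{\matheur{Y}_i}, \quad i=1,2,
\]
and rearrange the two middle factors via the symmetry of the external product to obtain a pairing
\[
(D_{\matheur{Y}_1}\matheur{F}_1 \boxtimes D_{\matheur{Y}_2}\matheur{F}_2) \boxtimes (\matheur{F}_1 \boxtimes \matheur{F}_2) \to \diag_{\matheur{Y}_1!}\omega_{\matheur{Y}_1} \boxtimes \diag_{\matheur{Y}_2!}\omega_{\matheur{Y}_2}.
\]
Since $\diag_{\matheur{Y}_1\times\matheur{Y}_2}$ factors as $\diag_{\matheur{Y}_1}\times\diag_{\matheur{Y}_2}$ (up to the swap of middle factors), \Kunneth{} for $!$-pushforward together with $\omega_{\matheur{Y}_1}\boxtimes\omega_{\matheur{Y}_2}\simeq\omega_{\matheur{Y}_1\times\matheur{Y}_2}$ identifies the right-hand side with $\diag_{\matheur{Y}_1\times\matheur{Y}_2!}\omega_{\matheur{Y}_1\times\matheur{Y}_2}$. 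The defining universal property of Verdier duality on $\matheur{Y}_1\times\matheur{Y}_2$ then converts the resulting pairing into a natural map
\[
D_{\matheur{Y}_1}\matheur{F}_1 \boxtimes D_{\matheur{Y}_2}\matheur{F}_2 \to D_{\matheur{Y}_1\times\matheur{Y}_2}(\matheur{F}_1\boxtimes\matheur{F}_2).
\]

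To prove this map is an equivalence, I would use the finitary hypothesis to reduce to schemes. Write $\matheur{Y}_i \simeq \colim_{j_i\in J_i} Y_{j_i}$ with $J_i$ finite, and, by \eqref{eq:sheaves_colimits_rep}, $\matheur{F}_i \simeq \colim_{j_i} \ins_{j_i!}\matheur{F}_i^{j_i}$ where $\matheur{F}_i^{j_i} := \ins_{j_i}^!\matheur{F}_i$. Compatibility of the external product with $!$-pushforward gives
\[
\matheur{F}_1 \boxtimes \matheur{F}_2 \simeq \colim_{(j_1,j_2)}(\ins_{j_1}\times\ins_{j_2})_!(\matheur{F}_1^{j_1}\boxtimes\matheur{F}_2^{j_2}),
\]
so by Lemma~\ref{lem:D_turns_colimits_to_limits} the target of the comparison map becomes a finite limit over $(J_1\times J_2)^{\op}$ of Verdier duals of $!$-pushforwards of external products of sheaves on schemes. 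A parallel analysis applies to the source after distributing $\boxtimes$ through each of the finite limits presenting $D_{\matheur{Y}_i}\matheur{F}_i$. The comparison map respects this limit structure by naturality, so it suffices to verify the equivalence term-by-term, i.e.\ for the pair of schemes $(Y_{j_1},Y_{j_2})$ and sheaves $(\matheur{F}_1^{j_1},\matheur{F}_2^{j_2})$; there it is the classical \Kunneth{} theorem for Verdier duality.

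The main technical obstacle will be justifying the interchange of $\boxtimes$ with the limits presenting each $D_{\matheur{Y}_i}\matheur{F}_i$, since $\boxtimes$ does not commute with arbitrary limits. I will handle this by first establishing the identification $D_{\matheur{Y}_i}(\ins_{j_i!}\matheur{F}_i^{j_i}) \simeq \ins_{j_i*}D_{Y_{j_i}}\matheur{F}_i^{j_i}$ via a formal adjunction computation using $\ins_{j_i!} \dashv \ins_{j_i}^!$; once the Verdier duals are rewritten as $\ins_{j_i*}$-pushforwards of sheaves on schemes, the interchange follows from base change for $*$-pushforward along $!$-pullback (cf.\ the discussion of $f_*$ in the excerpt) together with the finiteness of the indexing categories $J_i$.
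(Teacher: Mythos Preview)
Your strategy is essentially the paper's: write each $\matheur{F}_i$ as a finite colimit of $\ins_{j_i!}$-pushforwards, use Lemma~\ref{lem:D_turns_colimits_to_limits} to convert to finite limits, invoke the scheme case, and pass $\boxtimes$ through finite limits. The paper carries this out as a single chain of equivalences rather than first constructing a comparison map, but the skeleton is identical.

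One point needs tightening. You propose to establish $D_{\matheur{Y}_i}(\ins_{j_i!}\matheur{F}_i^{j_i}) \simeq \ins_{j_i*}D_{Y_{j_i}}\matheur{F}_i^{j_i}$ by ``a formal adjunction computation using $\ins_{j_i!}\dashv\ins_{j_i}^!$'' and then appeal to base change for $(-)_*$. Two issues: first, the adjunction $\ins_!\dashv\ins^!$ alone does not yield this identification---unwinding it leads to a base change problem for $\diag_{\matheur{Y}!}\omega_{\matheur{Y}}$ along $\id\times\ins$, which is real content rather than formal; second, $(-)_*$ in this paper's framework is only defined for schematic morphisms, and $\ins_{j_i}\colon Y_{j_i}\to\matheur{Y}_i$ is not obviously schematic for a general finitary pseudo-scheme. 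The paper sidesteps both by working with $\ins_!$ throughout and invoking Proposition~\ref{prop:D_finitary_!pushforward} (that $f_!\circ D\simeq D\circ f_!$ for finitary pseudo-proper $f$), which it imports from~\cite{gaitsgory_atiyah-bott_2015}. Replace your $\ins_*$ step with that proposition and your argument becomes the paper's.
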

\begin{proof}
First, note that the result holds when both $\matheur{Y}_1$ and $\matheur{Y}_2$ are schemes.
	
For the general case of finitary pseudo-schemes, we write
\[
	\matheur{Y}_1 \simeq \colim_i Y_{1i}\qquad\text{and}\qquad \matheur{Y}_2 \simeq \colim_j Y_{2j}.
\]
Then,
\[
	\matheur{F}_1 \simeq \colim_{i} \ins_{1i!} \ins_{1i}^! \matheur{F}_1 \qquad\text{and}\qquad \matheur{F}_2 \simeq \colim_j \ins_{2j!} \ins_{2j}^! \matheur{F}_2.
\]
Thus,
\begin{align*}
	D_{\matheur{Y}_1 \times \matheur{Y}_2}(\matheur{F}_1 \boxtimes \matheur{F}_2) 
	&\simeq D_{\matheur{Y}_1 \times \matheur{Y}_2} \colim_{i, j} (\ins_{1i} \times \ins_{2j})_! (\ins_{1i}\times \ins_{2j})^! (\matheur{F}_1 \boxtimes \matheur{F}_2) \\
	&\simeq \lim_{i, j} (\ins_{1i} \times \ins_{2j})_! D_{Y_{1i} \times Y_{2j}} (\ins_{1i}^! \matheur{F}_1 \boxtimes \ins_{2j}^! \matheur{F}_2) \teq \label{eq:D_and_tensor_finitary_!pushforward_1} \\
	&\simeq \lim_{i, j} (\ins_{1i} \times \ins_{2j})_! (D_{Y_{1i}} \ins_{1i}^! \matheur{F}_1  \boxtimes D_{Y_{2j}} \ins_{2j}^! \matheur{F}_2) \label{eq:D_and_tensor_scheme_case} \teq \\
	&\simeq (\lim_i \ins_{1i!} D_{Y_{1i}} \ins_{1i}^! \matheur{F}_1) \boxtimes (\lim_j \ins_{2j!} D_{Y_{2j}} \ins_{1j}^! \matheur{F}_2) \label{eq:D_and_tensor_finite_limit} \teq \\
	&\simeq (D_{\matheur{Y}_1} \colim_i \ins_{1i!} \ins_{1i}^! \matheur{F}_1)\boxtimes (D_{\matheur{Y}_2} \ins_{2j!} \ins_{2j}^! \matheur{F}_2) \teq \label{eq:D_and_tensor_finitary_!pushforward_2} \\
	&\simeq D_{\matheur{Y}_1} \matheur{F}_1 \boxtimes D_{\matheur{Y}_2}\matheur{F}_2.
\end{align*}
Here,~\eqref{eq:D_and_tensor_scheme_case} is due to the fact that the statement we are trying to prove holds for the case of schemes,~\eqref{eq:D_and_tensor_finite_limit} is due to the fact that the limits we are taking are all finite (due to the finitary assumption), and finally, both~\eqref{eq:D_and_tensor_finitary_!pushforward_1} and \eqref{eq:D_and_tensor_finitary_!pushforward_2} are due to Lemma~\ref{lem:D_turns_colimits_to_limits} and Proposition~\ref{prop:D_finitary_!pushforward} below.
\end{proof}

\begin{prop}\label{prop:D_finitary_!pushforward}
Let $f: \matheur{Y}_1 \to \matheur{Y}_2$ be a finitary pseudo-proper map between pseudo-schemes, each having a finitary diagonal. Then, the natural transformation
\[
	f_!\circ D_{\matheur{Y}_1} \to D_{\matheur{Y}_2}\circ f_!
\]
is an equivalence.
\end{prop}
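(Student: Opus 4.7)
The plan is to prove the equivalence by Yoneda: we show that for every $\matheur{F} \in \Shv(\matheur{Y}_2)$, the functors $\Hom(\matheur{F}, -)$ applied to $f_! D_{\matheur{Y}_1} \matheur{G}$ and to $D_{\matheur{Y}_2} f_! \matheur{G}$ give the same answer, naturally in $\matheur{F}$. This should follow from a chain of adjunctions together with one geometric base-change input.

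The key geometric fact that I will need is the identity
\[
\diag_{\matheur{Y}_2}^!(\matheur{F} \boxtimes f_!\matheur{G}) \;\simeq\; f_!\bigl(f^!\matheur{F} \otimesshriek \matheur{G}\bigr).
\]
To get it, I would apply base change for pseudo-proper $!$-pushforward to the pull-back square
\[
\xymatrix{
\matheur{Y}_1 \ar[r]^-{(f, \id)} \ar[d]_{f} & \matheur{Y}_2 \times \matheur{Y}_1 \ar[d]^{\id \times f} \\
\matheur{Y}_2 \ar[r]^-{\diag_{\matheur{Y}_2}} & \matheur{Y}_2 \times \matheur{Y}_2,
}
\]
noting that $\id \times f$ is pseudo-proper because $f$ is, and combine it with the factorisation $(f,\id) = (f \times \id) \circ \diag_{\matheur{Y}_1}$ together with the $\boxtimes$-formula $(\id \times f)_!(\matheur{F} \boxtimes \matheur{G}) \simeq \matheur{F} \boxtimes f_!\matheur{G}$, which itself is an instance of projection formula/base change once one unpacks $\boxtimes = \pi_1^! \otimesshriek \pi_2^!$.

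Granting this, the proof is a straightforward bookkeeping: starting from $\Hom(\matheur{F}, D_{\matheur{Y}_2} f_!\matheur{G})$, unfold the definition of Verdier duality to pass to $\Hom(\matheur{F} \boxtimes f_!\matheur{G}, \diag_{\matheur{Y}_2!}\omega_{\matheur{Y}_2})$, adjoint $\diag_{\matheur{Y}_2!} \dashv \diag_{\matheur{Y}_2}^!$, apply the displayed identity, adjoint $f_! \dashv f^!$ together with $f^!\omega_{\matheur{Y}_2} \simeq \omega_{\matheur{Y}_1}$, and finally re-assemble the other side via the corresponding adjunctions for $\matheur{Y}_1$. Yoneda then produces the equivalence, and inspecting the trail of unit/counit maps identifies it with the standard natural transformation $f_! \circ D_{\matheur{Y}_1} \to D_{\matheur{Y}_2} \circ f_!$.

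I expect the only place requiring real care to be the geometric step above: one has to check that the square is genuinely cartesian (as a diagram of prestacks) and that the hypotheses for base change of pseudo-proper $!$-pushforward apply, which uses exactly the pseudo-properness of $f$ and the finitary-diagonal hypotheses on the $\matheur{Y}_i$ (the latter guaranteeing that the Verdier duals are defined in the first place). The rest of the argument is formal manipulation of adjunctions, so the "hard part" is entirely the base-change computation; the finitary pseudo-proper assumptions on $f$ are precisely what makes it available.
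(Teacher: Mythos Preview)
The paper does not actually prove this proposition: it simply cites \cite[Cor.~7.5.6]{gaitsgory_atiyah-bott_2015}. So there is no ``paper's proof'' to compare against, and your attempt is an independent argument.

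Unfortunately, your chain of adjunctions does not close. The problematic step is ``adjoint $\diag_{\matheur{Y}_2!} \dashv \diag_{\matheur{Y}_2}^!$'' applied to
\[
\Hom\bigl(\matheur{F}\boxtimes f_!\matheur{G},\ \diag_{\matheur{Y}_2!}\omega_{\matheur{Y}_2}\bigr).
\]
The adjunction $g_!\dashv g^!$ rewrites $\Hom(g_!A,B)\simeq\Hom(A,g^!B)$; it says nothing about $\Hom(A,g_!B)$. Here $\diag_{\matheur{Y}_2!}$ sits in the \emph{target}, so you cannot trade it for $\diag_{\matheur{Y}_2}^!$ on the source. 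If instead one tries to route through $(\id\times f)_!$ on the source (which \emph{is} legitimate), one arrives at
\[
\Hom\bigl(\matheur{F}\boxtimes\matheur{G},\ (\id\times f)^!\diag_{\matheur{Y}_2!}\omega_{\matheur{Y}_2}\bigr)
\ \simeq\ \Hom\bigl(\matheur{F}\boxtimes\matheur{G},\ (f\times\id)_!\,\diag_{\matheur{Y}_1!}\omega_{\matheur{Y}_1}\bigr),
\]
using your base-change square. But to match this with
\[
\Hom\bigl(\matheur{F},\ f_!D_{\matheur{Y}_1}\matheur{G}\bigr)\ \simeq\ \Hom\bigl((f\times\id)^!(\matheur{F}\boxtimes\matheur{G}),\ \diag_{\matheur{Y}_1!}\omega_{\matheur{Y}_1}\bigr)
\]
you would need $(f\times\id)^!\dashv(f\times\id)_!$, which is the adjunction in the wrong direction. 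So the ``formal bookkeeping'' part is not in fact formal: a genuine extra ingredient is needed to bridge this last step. In the reference the paper cites, this is handled not by a pure adjunction chase but by exploiting that a finitary pseudo-proper $f_!$ is a \emph{finite} colimit of pushforwards from schemes, so that it commutes with the limits implicit in $D$; your displayed projection-formula identity is correct and useful, but it does not by itself supply that missing step.
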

\begin{proof}
	See~\cite[Cor. 7.5.6]{gaitsgory_atiyah-bott_2015}.
\end{proof}

\begin{rmk}
One direct corollary of this proposition is the fact that for any sheaf $\matheur{F} \in \Shv(X)$, we have the following natural equivalence
\[
	\delta_{X!} D_X \matheur{F} \simeq D_{\Ran X} \delta_{X!} \matheur{F}.
\]
\end{rmk}

\begin{cor} \label{cor:D_Ran_commutes_otimesstar_finite_support}
Let $\matheur{F}_1, \matheur{F}_2, \cdots, \matheur{F}_k \in \Shv(\Ran X)$ with finite supports, i.e. there exists an $n$ such that all the $\matheur{F}_i$'s are $!$-pushforward of sheaves on $\Ran^{\leq n} X$. Then, we have the following natural equivalence
\[
	D_{\Ran X} (\matheur{F}_1 \otimesstar\matheur{F}_2 \otimes \cdots \otimesstar \matheur{F}_k) \simeq (D_{\Ran X} \matheur{F}_1) \otimesstar (D_{\Ran X} \matheur{F}_2) \otimesstar \cdots \otimesstar (D_{\Ran X} \matheur{F}_k).
\]
\end{cor}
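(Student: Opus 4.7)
The plan is to use the assumed finite support to reduce everything to computations inside the finitary pseudo-scheme $\Ran^{\leq nk} X$, then apply the already-established commutations of $D$ with $\boxtimes$ and with $!$-pushforward for finitary pseudo-proper maps.

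First I would fix $\matheur{G}_i \in \Shv(\Ran^{\leq n} X)$ with $\matheur{F}_i \simeq \iota_{n!}\matheur{G}_i$, where $\iota_m : \Ran^{\leq m} X \hookrightarrow \Ran X$ denotes the natural inclusion. The key geometric observation is that the union map $\union : (\Ran X)^k \to \Ran X$ restricts to a map $u : (\Ran^{\leq n} X)^k \to \Ran^{\leq nk} X$ which is finitary pseudo-proper, and that $\union \circ (\iota_n)^{\times k} = \iota_{nk} \circ u$. Using base change along $(\iota_n)^{\times k}$ together with this factorization, the left-hand side rewrites as
\[
D_{\Ran X}(\matheur{F}_1 \otimesstar \cdots \otimesstar \matheur{F}_k) \simeq D_{\Ran X}\, \iota_{nk!}\, u_!(\matheur{G}_1 \boxtimes \cdots \boxtimes \matheur{G}_k).
\]

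Next I would apply Proposition~\ref{prop:D_finitary_!pushforward} to $\iota_{nk}$ to move $D$ past $\iota_{nk!}$, then apply it again to $u$ to move $D$ past $u_!$, and finally apply the commutation of $D$ with $\boxtimes$ for finitary pseudo-schemes (the preceding proposition) to split the Verdier dual across the external tensor. This produces
\[
\iota_{nk!}\, u_!\bigl(D_{\Ran^{\leq n} X}\matheur{G}_1 \boxtimes \cdots \boxtimes D_{\Ran^{\leq n} X}\matheur{G}_k\bigr).
\]
Running the reduction of the first paragraph in reverse, and using Proposition~\ref{prop:D_finitary_!pushforward} once more on $\iota_n$ to identify $D_{\Ran X}\matheur{F}_i \simeq \iota_{n!} D_{\Ran^{\leq n} X}\matheur{G}_i$, one recognizes this expression as $D_{\Ran X}\matheur{F}_1 \otimesstar \cdots \otimesstar D_{\Ran X}\matheur{F}_k$, as desired.

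The main obstacle is bookkeeping rather than content: one must verify that each of the maps $\iota_m$, $(\iota_n)^{\times k}$, and $u$ satisfies the hypotheses of Proposition~\ref{prop:D_finitary_!pushforward}, namely that they are finitary pseudo-proper maps between pseudo-schemes with finitary diagonals. Each is built from inclusions of products $X^I \hookrightarrow X^J$ associated to surjections $J \surjects I$ of non-empty finite sets and from finite (co)products of such, so finitariness and pseudo-properness reduce to properness of $X$ together with the elementary presentation $\Ran^{\leq m} X \simeq \colim_{|I| \leq m} X^I$ as a finite colimit of proper schemes. Once these verifications are in place, the rest of the argument is a formal chain of commutations.
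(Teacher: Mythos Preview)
Your argument is correct and is exactly the deduction the paper intends: the corollary is stated without proof as an immediate consequence of the two preceding propositions, and your reduction to $\Ran^{\leq nk} X$ via the factorization $\union \circ (\iota_n)^{\times k} = \iota_{nk} \circ u$ is the natural way to unpack it. One small overstatement in your final paragraph: properness of $X$ is not needed for the verification---separatedness of $X$ already makes each diagonal embedding $X^I \hookrightarrow X^J$ a closed immersion, hence proper, and this (together with the finiteness of the indexing categories) is all that is required for $\iota_m$, $(\iota_n)^{\times k}$, and $u$ to be finitary pseudo-proper between pseudo-schemes with finitary diagonals.
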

\begin{proof}
Since the sheaves involved have finite supports, their box-tensor commutes with Verdier duality on $\Ran^{\leq n} X$, by Proposition~\ref{prop:boxtimes_finitary_pseudo-scheme}. Since $\Ran^{\leq n} X \to \Ran X$ is finitary pseudo-proper, Proposition~\ref{prop:D_finitary_!pushforward} implies that their box-tensor also commutes with Verdier duality on $\Ran X$. Finally, using the fact that the union map is finitary pseudo-proper, Proposition~\ref{prop:boxtimes_finitary_pseudo-scheme} then implies that $\otimesstar$ of these sheaves also commutes with Verdier duality on the $\Ran$ space.
\end{proof}

\subsection{$\Chev$, $\coChev$, and $D_{\Ran X}$} 
We will now turn to Theorem~\ref{thm:intro:Chev_coChev_and_D_Ran}, which provides a link between the two functors $\Chev$ and $\coChev$ via the functor of taking Verdier duality on the $\Ran$ space.
\begin{thm} \label{thm:Chev_coChev_and_D_Ran}
Let $\mathfrak{g}\in \Liestar(X)^{\leq -1}$. Then we have a natural equivalence
\[
\coChev(D_{X} \mathfrak{g}) \simeq D_{\Ran X} \Chev(\mathfrak{g}),
\]
of objects in $\ComAlgstar(\Ran X)$, where $D_{\Ran X}$ is the functor of taking Verdier duality on $\Ran X$.
\end{thm}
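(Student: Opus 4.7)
The plan is to exploit the fact that $\Chev$ is defined as a sifted colimit while $\coChev$ is defined as a cosimplicial limit, and that Verdier duality on $\Ran X$ interchanges the two, provided we can commute it past the $\otimesstar$-tensor powers appearing in each simplicial level.

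First, I would write the bar construction explicitly: $\Chev(\mathfrak{g}) \simeq [1] \circ |\BarO_{\Lie}^\bullet(\mathfrak{g})|$ as a geometric realization over $\Delta^{\op}$, where each term $\BarO_{\Lie}^n(\mathfrak{g})$ is built out of (shifted, $\Sigma_m$-coinvariant) summands of the form $\mathfrak{g}^{\otimesstar m}$, reflecting the action of the $\Lie$ operad on $\mathfrak{g}$. Applying Lemma~\ref{lem:D_turns_colimits_to_limits}, $D_{\Ran X}\Chev(\mathfrak{g})$ becomes the totalization over $\Delta$ of the terms $D_{\Ran X}$ applied to these $\otimesstar$-powers, together with a shift by $[-1]$.

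The crux is then to identify each term with the corresponding term of the cobar construction for $D_X\mathfrak{g}$. Since $\mathfrak{g}\in \Liestar(X)$, its support lies in the diagonal $\delta_X\colon X\hookrightarrow \Ran X$, so $\mathfrak{g}^{\otimesstar m}$ is supported in $\Ran^{\leq m}X$, i.e.\ is finitely supported. Corollary~\ref{cor:D_Ran_commutes_otimesstar_finite_support} then gives
\[
D_{\Ran X}(\mathfrak{g}^{\otimesstar m})\simeq (D_{\Ran X}\mathfrak{g})^{\otimesstar m},
\]
and by the remark following Proposition~\ref{prop:D_finitary_!pushforward}, $D_{\Ran X}\mathfrak{g}\simeq D_{\Ran X}\delta_{X!}(\mathfrak{g}|_X)\simeq \delta_{X!}D_X(\mathfrak{g}|_X)$, so each term is a $\otimesstar$-power of $D_X\mathfrak{g}$. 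Matching the combinatorics of the simplicial face/degeneracy maps on both sides (together with the $\Sigma_m$-equivariance, which becomes $\Sigma_m$-invariance after dualizing), the resulting cosimplicial object is exactly $\coBarP_{\coLie}^\bullet(D_X\mathfrak{g})$. The $[-1]$ shift accounts for the difference between $\coChev$ and $\coBarP_{\coLie}$ as in \S\ref{subsubsec:prelim_Koszul_coLie_ComAlg}, producing the claimed equivalence of underlying sheaves.

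To upgrade to an equivalence in $\ComAlgstar(\Ran X)$, I would appeal to the enhanced adjunction~\eqref{eq:adjunction_Koszul_dual}: the cobar construction inherits its commutative algebra structure from the cosimplicial structure, while $D_{\Ran X}\Chev(\mathfrak{g})$ inherits its commutative algebra structure by dualizing the commutative coalgebra structure on $\Chev(\mathfrak{g})$. Since the identification of simplicial pieces above is compatible with the face maps (which encode both the coproduct on $\Chev(\mathfrak{g})$ and the product on $\coChev(D_X\mathfrak{g})$), the equivalence automatically respects these structures.

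The main obstacle I anticipate is not the formal interchange of limits and duality, but rather justifying that the finite-support hypothesis in Corollary~\ref{cor:D_Ran_commutes_otimesstar_finite_support} holds \emph{uniformly} across all simplicial levels in a way compatible with the transition maps; in principle each $\BarO_{\Lie}^n(\mathfrak{g})$ lives on $\Ran^{\leq N(n)}X$ for some $N(n)$ growing with $n$, so one is really applying the corollary level by level and then assembling. A secondary subtlety is the role of the hypothesis $\mathfrak{g}\in \Liestar(X)^{\leq -1}$: it is not needed for the formal identification of underlying sheaves, but it guarantees $D_X\mathfrak{g}\in \coLiestar(X)^{\geq 1}$, which by Theorem~\ref{thm:factorizability_coChev} is precisely the hypothesis ensuring that $\coChev(D_X\mathfrak{g})$ is factorizable, so that the statement makes sense in the context where we want to apply it.
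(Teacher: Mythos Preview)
Your approach via the bar/cobar simplicial resolutions is in the right spirit, but there is a genuine gap, and it lies precisely where you dismiss the connectivity hypothesis as ``not needed for the formal identification of underlying sheaves.''

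Each level $\BarO_{\Lie}^n(\mathfrak{g})$ is not finitely supported: it is an \emph{infinite} direct sum $\bigoplus_{m\geq 1} \Lie^{\star n}(m)\otimes_{\Sigma_m}\mathfrak{g}^{\otimesstar m}$. While each individual summand is supported on $\Ran^{\leq m}X$ and Corollary~\ref{cor:D_Ran_commutes_otimesstar_finite_support} applies to it, applying $D_{\Ran X}$ to the whole level turns the direct sum into a \emph{product}. The cobar complex $\coBarP^\bullet_{\coLie}(D_X\mathfrak{g})$, on the other hand, is defined (in the ind-nilpotent convention used throughout the paper) with a direct sum. So your termwise identification only goes through if $\prod_m \simeq \bigoplus_m$ on the dual side. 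This is exactly where $\mathfrak{g}\in\Liestar(X)^{\leq -1}$ enters: it forces $D_X\mathfrak{g}\in\coLiestar(X)^{\geq 1}$, so that $(D_X\mathfrak{g})^{\otimesstar m}|_{\oversetsupscript{X}{\circ}{I}}$ lives in perverse degrees $\geq m$, making the graded pieces decaying in the sense of \S4.2, whence $\prod\simeq\bigoplus$ by Lemma~\ref{lem:direct_sum_and_products_are_the_same_Ran}. The obstacle you anticipate (uniformity of finite support across levels) is not the real one; the real one is the sum-versus-product mismatch within each level.

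The paper sidesteps this by using the $\addFil$/$\addCoFil$ decomposition rather than the bar resolution: one writes $\Chev\mathfrak{g}\simeq\colim_i\Chev^i\mathfrak{g}$ and $\coChev(D_X\mathfrak{g})\simeq\lim_i\coChev^i(D_X\mathfrak{g})$, where each $\Chev^i\mathfrak{g}$ is a \emph{finite} object supported on $\Ran^{\leq i}X$, so Corollary~\ref{cor:D_Ran_commutes_otimesstar_finite_support} gives $D_{\Ran X}\Chev^i\mathfrak{g}\simeq\coChev^i(D_X\mathfrak{g})$ with no infinite sum in sight. The connectivity hypothesis is then invoked exactly once, via the already-established top square of~\eqref{eq:addCoFil_trick_main_diagram}, to identify $\coChev(D_X\mathfrak{g})$ with $\lim_i\coChev^i(D_X\mathfrak{g})$. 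This is cleaner: all the ``infinity'' is isolated to a single sequential limit handled by the machinery of \S4, rather than being spread across every cosimplicial level.
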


Note that this is the only place we use Verdier duality on the Ran space. However, we essentially use it in a rather minimal way: not much besides the definition itself.

\begin{proof}
We will employ ideas originated from the $\addFil$ and $\addCoFil$ tricks (see also \S\ref{sec:appendix:addFil_trick}). First, observe that for any $\mathfrak{g} \in \Liestar(X)$, we have a canonical equivalence
\[
	\addCoFil D_{\Ran X} \mathfrak{g} \simeq D_{\Ran X} \addFil \mathfrak{g}.
\]
We use $\Chev^i \mathfrak{g}$ and $\coChev^i D_{\Ran X}\mathfrak{g}$ to denote the $i$-th piece in the filtration/co-filtration of $\Chev (\addFil\mathfrak{g})$ and $\coChev (\addCoFil D_{\Ran X} \mathfrak{g})$ respectively.

From \S\ref{sec:appendix:addFil_trick} and the top part of the commutative diagram~\eqref{eq:addCoFil_trick_main_diagram}, we have the following natural equivalences
\begin{align*}
	\Chev\mathfrak{g} &\simeq \colim_i \Chev^i \mathfrak{g}, \\
	\coChev (D_{\Ran X}\mathfrak{g}) &\simeq \lim_i \coChev^i (D_{\Ran X} \mathfrak{g}).
\end{align*}
At the same time, by Lemma~\ref{lem:D_turns_colimits_to_limits}, we know that
\[
	D_{\Ran X} \colim_i \Chev^i \mathfrak{g} \simeq \lim_i D_{\Ran X} \Chev^i \mathfrak{g}.
\]
Thus, it suffices to show that
\[
	D_{\Ran X} \Chev^i \mathfrak{g} \simeq \coChev^i D_{\Ran X}\mathfrak{g}.
\]
Now, it's an immediate consequence of Corollary~\ref{cor:D_Ran_commutes_otimesstar_finite_support}.
\end{proof}

\begin{cor}
Let $\mathfrak{g}\in \Liestar(X)^{\leq -1}$. Then $D_{\Ran X} \Chev(\mathfrak{g})$ is a factorizable commutative algebra on $\Ran X$.
\end{cor}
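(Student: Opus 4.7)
The plan is to chain together Theorem~\ref{thm:Chev_coChev_and_D_Ran} and Theorem~\ref{thm:factorizability_coChev}: the first rewrites $D_{\Ran X}\Chev(\mathfrak{g})$ as $\coChev(D_X\mathfrak{g})$, and the second tells us $\coChev$ produces a factorizable commutative algebra as soon as its input lies in $\coLiestar(X)^{\geq 1}$. So all that needs to be checked is that $D_X\mathfrak{g}$ lives in that subcategory whenever $\mathfrak{g}\in\Liestar(X)^{\leq -1}$.

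First I would note that Verdier duality on the scheme $X$ interchanges the $\otimes$- and $\otimesshriek$-monoidal structures on $\Shv(X)$: for $\matheur{F},\matheur{G}\in\Shv(X)$ one has $D_X(\matheur{F}\otimes\matheur{G})\simeq D_X\matheur{F}\overset{!}{\otimes} D_X\matheur{G}$, and the diagonal is embedded as $\delta_X^!\simeq\delta_X^*[2d]$ up to a twist which pairs correctly with the shift in the two monoidal structures. Consequently $D_X$ sends $\Lieast(X)$ to $\coLieshriek(X)$ as an (anti-)equivalence, and using the identifications $\Liestar(X)\simeq\Lieast(X)$ and $\coLieshriek(X)\simeq\coLiestar(X)$ recalled in the previous section, we obtain an anti-equivalence
\[
D_X:\Liestar(X)^{\op}\to\coLiestar(X).
\]

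Next I would use the standard fact that Verdier duality on a scheme is $t$-exact for the perverse $t$-structure once one flips the inequalities: $D_X$ sends $\Shv(X)^{\leq 0}$ to $\Shv(X)^{\geq 0}$ and vice versa. Applying this with the shift, $\mathfrak{g}\in\Liestar(X)^{\leq -1}$ gives $D_X\mathfrak{g}\in\coLiestar(X)^{\geq 1}$. This is the only calculation of substance, and it is immediate from the definition of the perverse $t$-structure.

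Having verified the hypothesis, I would now invoke Theorem~\ref{thm:factorizability_coChev} to conclude that $\coChev(D_X\mathfrak{g})\in\Factstar(X)\subset\ComAlgstar(\Ran X)$. Finally, Theorem~\ref{thm:Chev_coChev_and_D_Ran} supplies a natural equivalence
\[
D_{\Ran X}\Chev(\mathfrak{g})\simeq\coChev(D_X\mathfrak{g})
\]
in $\ComAlgstar(\Ran X)$, which then transports the factorizability from the right-hand side to the left-hand side. No real obstacle is expected: the content of the corollary is entirely that the two theorems are compatible at the level of connectivity conditions via Verdier duality, and the verification of $D_X:\Liestar(X)^{\leq -1}\to\coLiestar(X)^{\geq 1}$ is the only place a small check is required.
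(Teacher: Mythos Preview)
Your proposal is correct and follows exactly the paper's approach: the paper's proof is a single line stating that the corollary is a direct consequence of Theorem~\ref{thm:Chev_coChev_and_D_Ran} and Theorem~\ref{thm:factorizability_coChev}. You have simply spelled out the connecting check the paper leaves implicit, namely that Verdier duality on $X$ sends $\Liestar(X)^{\leq -1}$ into $\coLiestar(X)^{\geq 1}$ via the standard $t$-exactness of $D_X$ for the perverse $t$-structure.
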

\begin{proof}
This is a direct consequence of Theorem~\ref{thm:Chev_coChev_and_D_Ran} and Theorem~\ref{thm:factorizability_coChev}. 
\end{proof}

\subsection{$\coChev$ and open embeddings} We end the section with the following simple observation.

\begin{prop} \label{prop:coChev_open_embedding} Let
\[
	j: X' \to X
\]
be an open embedding of schemes, which induces an open embedding of prestacks
\[
	j_{\Ran}: \Ran X' \to \Ran X.
\]
Then for any $\mathfrak{g}' \in \coLiestar(X')$, we have the following natural equivalence
\[
	(j_{\Ran})_* \coChev(\mathfrak{g}') \simeq \coChev(j_* \mathfrak{g}').
\]
\end{prop}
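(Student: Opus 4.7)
The plan is to deduce the proposition by passing to right adjoints from a compatibility statement for the left adjoints $j_{\Ran}^*$ and $\coPrim[1]$. First I would show that $j_{\Ran}^*: \Shv(\Ran X) \to \Shv(\Ran X')$ is both colimit-preserving and symmetric monoidal with respect to the $\otimesstar$-monoidal structure. Colimit-preservation is immediate because $j_{\Ran}$ is \'etale (an open embedding is schematic, and its base change to any scheme is an open embedding of schemes), so by~\eqref{eq:etale_!_*_pullback_equiv} we have $j_{\Ran}^* \simeq j_{\Ran}^!$, which commutes with all colimits by the prelims. The symmetric monoidality rests on the observation that the square
\[
\xymatrix{
\Ran X' \times \Ran X' \ar[r]^{\union_{X'}} \ar[d]_{j_{\Ran} \times j_{\Ran}} & \Ran X' \ar[d]^{j_{\Ran}} \\
\Ran X \times \Ran X \ar[r]^{\union_X} & \Ran X
}
\]
is Cartesian---a non-empty finite subset $S \subset X$ written as $S_1 \cup S_2$ lies inside $X'$ iff each $S_i$ does---which, combined with pseudo-proper base change for $\union_X$ and the K\"unneth compatibility of $(j_{\Ran}\times j_{\Ran})^!$ with box products, gives $j_{\Ran}^!(\matheur{F}_1 \otimesstar \matheur{F}_2) \simeq j_{\Ran}^!\matheur{F}_1 \otimesstar j_{\Ran}^!\matheur{F}_2$.

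Next, $\coPrim[1]$ is constructed as a bar construction, i.e. the geometric realization of a simplicial object whose terms are iterated $\otimesstar$-products. Since any colimit-preserving symmetric monoidal functor commutes with such a construction, I obtain a natural equivalence
\[
j_{\Ran}^* \circ \coPrim[1]_{\Ran X} \simeq \coPrim[1]_{\Ran X'} \circ j_{\Ran}^*
\]
of functors $\ComAlgstar(\Ran X) \to \coLiestar(\Ran X')$. I now pass to right adjoints: using $\coPrim[1] \dashv \coChev$ on both $\Ran X$ and $\Ran X'$, together with $j_{\Ran}^* \dashv (j_{\Ran})_*$, uniqueness of right adjoints to a composite yields
\[
\coChev_{\Ran X} \circ (j_{\Ran})_* \simeq (j_{\Ran})_* \circ \coChev_{\Ran X'}
\]
as functors $\coLiestar(\Ran X') \to \ComAlgstar(\Ran X)$. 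Specializing to $\mathfrak{g}' \in \coLiestar(X') \subset \coLiestar(\Ran X')$ and identifying $(j_{\Ran})_* \mathfrak{g}'$ with $j_*\mathfrak{g}' \in \coLiestar(X) \subset \coLiestar(\Ran X)$ via $\ins_X \circ j = j_{\Ran} \circ \ins_{X'}$ gives the proposition.

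The only non-formal step is the symmetric monoidality of $j_{\Ran}^*$, which reduces to the cartesianness of the union square above and an application of pseudo-proper base change already stated in the prelims; everything else is a formal manipulation of adjunctions. In particular, no co-connectivity assumption or Verdier-duality input is needed, and the argument works verbatim for arbitrary $\mathfrak{g}' \in \coLiestar(\Ran X')$.
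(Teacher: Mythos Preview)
Your approach is genuinely different from the paper's. The paper argues directly: since $\coChev$ is a totalization and $(j_{\Ran})_*$ is a right adjoint, $(j_{\Ran})_*$ slides past the limit; then one checks term by term that $(j_{\Ran})_*$ of each level of the cosimplicial object for $\mathfrak{g}'$ matches the corresponding level for $j_*\mathfrak{g}'$, using that $*$-pushforward along open embeddings commutes with $\boxtimes$. No adjunction gymnastics, no mention of $\coPrim[1]$.

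Your route through left adjoints is elegant in spirit, but it has a real gap at the very last step. When you pass to right adjoints in
\[
j_{\Ran}^* \circ \coPrim[1]_{\Ran X} \simeq \coPrim[1]_{\Ran X'} \circ j_{\Ran}^*,
\]
what you actually obtain is
\[
\coChev_{\Ran X} \circ R \;\simeq\; (j_{\Ran})_* \circ \coChev_{\Ran X'},
\]
where $R$ is the right adjoint of $j_{\Ran}^*$ on $\coLiestar$. On the $\ComAlgstar$ side the right adjoint is indeed $(j_{\Ran})_*$, because the right adjoint of a symmetric monoidal functor is lax monoidal and hence preserves algebras. But on the $\coLiestar$ side you need $(j_{\Ran})_*$ to preserve \emph{coalgebras}, i.e.\ to be oplax monoidal; lax monoidality gives you nothing here. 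So the assertion $R = (j_{\Ran})_*$ is unjustified as written, and your final identification ``$(j_{\Ran})_*\mathfrak{g}' \simeq j_*\mathfrak{g}' \in \coLiestar(X)$'' presupposes exactly this.

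The fix is to show that $(j_{\Ran})_*$ is in fact \emph{strongly} symmetric monoidal for $\otimesstar$ (or at least is so on the objects you need), which amounts to a base-change identity $(j_{\Ran})_*\union'_! \simeq \union_!(j_{\Ran}\times j_{\Ran})_*$ together with the $\boxtimes$-compatibility of $*$-pushforward along open embeddings. But once you have established that, you have precisely the two ingredients the paper uses to run the direct argument, and the detour through $\coPrim[1]$ and adjunctions becomes unnecessary. In short: your argument can be completed, but completing it forces you to prove the key monoidality fact that the paper's approach uses head-on.
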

\begin{proof}[Proof (Sketch)]
The result is a direct consequence of the fact that $(j_{\Ran})_*$ is symmetric monoidal and commutes with limits. The latter is due to the fact that it is a right adjoint. The former is due to the fact that for any open embeddings of prestacks $f_i: X'_i \to X_i$ and any $\matheur{F}_i \in \Shv(X'_i)$ for $i=1, 2$, we have a natural equivalence
\[
	(f_1 \times f_2)_*(\matheur{F}_1 \boxtimes \matheur{F}_2) \simeq f_{1*}\matheur{F}_1 \boxtimes f_{2*} \matheur{F}_2.
\]
This is in turn a consequence of~\eqref{eq:base_change_lower*_upper!} and the corresponding fact for schemes.
\end{proof}

\section{An application to the Atiyah-Bott formula} \label{sec:application_Atiyah-Bott}
We will now give an application of the results proved so far to the Atiyah-Bott formula. As mentioned in the introduction, these results allow us to simplify the second of the two main steps in the original proofs given in~\cite{gaitsgory_weils_2014} and~\cite{gaitsgory_atiyah-bott_2015}. In what follows, \S\ref{subsec:statement}--\S\ref{subsec:pairing} are intended to orient the readers with the existing results proved in~\cite{gaitsgory_weils_2014} and~\cite{gaitsgory_atiyah-bott_2015},\footnote{Namely, all the results stated in these subsections could be found in~\cite{gaitsgory_weils_2014} or~\cite{gaitsgory_atiyah-bott_2015}. The readers should be warned that we provide a mere overview of the development given in these two papers, with many technical points elided.} whereas the purpose of the last part, \S\ref{subsec:conclusion}, is to explain how the results we've proved so far fit in with the rest. 

\subsection{The statement}\label{subsec:statement}
From now on, $X$ is a smooth and complete curve over an algebraically closed field $k$, and $G$ a smooth, fiber-wise connected group-scheme over $X$, whose generic fiber is semi-simple simply connected. Due to~\cite[Lem. 7.1.1 and Prop A.3.11]{gaitsgory_weils_2014}, we can (and from now on we will) assume that $G$ is semi-simple simply connected over an open dense subset
\[
	j: X' \hookrightarrow X,
\]
and moreover, the fibers of $G$ over any point in $X-X'$ are homologically trivial.

We will also use
\[
	j_{\Ran}: \Ran X' \to \Ran X
\]
to denote the corresponding open embedding on the $\Ran$ space and
\[
	\Gamma_{j_{\Ran}}: \Ran X' \to \Ran X'\times \Ran X
\]
to denote its graph.

\subsubsection{} Let $G_0$ be the split form of $G$. Then it is well-known that
\[
	C^*(BG_0) \simeq \Sym M_0 \teq\label{eq:C^*(BG_0_is_free)}
\]
is a free commutative algebra, for some $M_0\in \Vect$. In the case of $\ell$-adic sheaves in positive characteristic setting, this equivalence is compatible with the geometric Frobenius action, where
\[
	M_0 \simeq \bigoplus_e \Lambda[-2e](-e),
\]
and $e$'s are the exponents of $G_0$.

The assignment $G_0 \mapsto M_0$ is functorial with respect to automorphisms of $G_0$, and hence, for a general $G$ (subject to the assumptions mentioned above), we get a local system
\[
	M \in \Shv(X'),
\]
whose $!$-fiber at each geometric point $x \in X$ is equivalent to $M_0$.

Below is the statement of the Atiyah-Bott formula.
\begin{thm} \label{thm:Atiyah-Bott} Let $G, X$ as above. Then
\begin{enumerate}[(a)]
	\item We have an equivalence
	\[
		C^*(\Bun_G) \simeq \Sym(C^*(X', M)).
	\]
	\item When $k=\Fqbar$, and $X$ and $G$ are defined over $\Fq$, the above equivalence can be chosen to be compatible with the Frobenius actions.
\end{enumerate}
\end{thm}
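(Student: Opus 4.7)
The plan is to execute the program outlined in the introduction, modified to handle the fact that $G$ is only semisimple simply connected over the open subset $X' \subset X$. The first ingredient is non-abelian \Poincare{} duality from \cite{gaitsgory_weils_2014}, which provides the factorizable sheaf $\matheur{A} = f_! \omega_{\Gr_{\Ran X}}$ on $\Ran X$ together with the equivalence $C^*_c(\Bun_G, \omega_{\Bun_G}) \simeq C^*_c(\Ran X, \matheur{A})$. Homological triviality of $G$ away from $X'$ lets us identify $C^*_c(\Ran X, \matheur{A}) \simeq C^*_c(\Ran X', \matheur{A}|_{\Ran X'})$, and a direct analysis of the cohomological degrees of $f_! \omega_{\Gr}$ shows that $\matheur{A}|_{\Ran X'}$ lies in $\coFactstar(X')^{\leq c_{cA}}$; Theorem~\ref{thm:intro:Koszul_duality_connectivity_on_Ran} then produces a canonical $\mathfrak{a} \in \Liestar(X')^{\leq c_L}$ with $\Chev(\mathfrak{a}) \simeq \matheur{A}|_{\Ran X'}$.

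The second ingredient is the commutative algebra $\matheur{B}$ built from $C^*(BG)$, which from $C^*(BG_0) \simeq \Sym M_0$ and functoriality satisfies $\matheur{B}|_{\Ran X'} \simeq \Sym(M \otimes \omega_{X'})$ (symmetric algebra for the $\otimesstar$-structure). The Verdier pairing $\matheur{A} \boxtimes \matheur{B} \to \delta_! \omega_{\Ran X}$ from \cite{gaitsgory_atiyah-bott_2015} gives a map $\matheur{B} \to D_{\Ran X} \matheur{A}$ in $\ComAlgstar(\Ran X)$. Over $\Ran X'$, Theorem~\ref{thm:intro:Chev_coChev_and_D_Ran} identifies $D_{\Ran X'}\Chev(\mathfrak{a}) \simeq \coChev(D_{X'}\mathfrak{a})$, and since $D_{X'}\mathfrak{a} \in \coLiestar(X')^{\geq 2}$, Theorem~\ref{thm:intro:factorizability_coChev} makes $\coChev(D_{X'}\mathfrak{a})$ factorizable. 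Both sides of the restricted map $\matheur{B}|_{\Ran X'} \to D_{\Ran X'}\matheur{A}|_{\Ran X'}$ being factorizable, it is an equivalence if and only if it is so on the diagonal $X' \hookrightarrow \Ran X'$ --- a purely local question handled by reduction to $\mathbb{P}^1$ exactly as in \cite{gaitsgory_atiyah-bott_2015}.

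Part (a) then follows from the chain
\begin{align*}
	C^*(\Bun_G)
	&\simeq C^*_c(\Bun_G, \omega_{\Bun_G})^\vee
	\simeq C^*_c(\Ran X', \matheur{A}|_{\Ran X'})^\vee \\
	&\simeq C^*_c(\Ran X', \Chev(\mathfrak{a}))^\vee
	\simeq \Chev(C^*_c(X', \mathfrak{a}))^\vee \\
	&\simeq \coChev(C^*_c(X', \mathfrak{a})^\vee)
	\simeq \coChev(C^*_c(X', D_{X'}\mathfrak{a})) \\
	&\simeq C^*_c(\Ran X', \coChev(D_{X'}\mathfrak{a}))
	\simeq C^*_c(\Ran X', \matheur{B}|_{\Ran X'}) \\
	&\simeq \Sym(C^*(X', M)),
\end{align*}
in which $C^*_c$ commutes with $\Chev$ by \cite{francis_chiral_2011}, $C^*_c$ commutes with $\coChev$ by Theorem~\ref{thm:intro:coChev_and_C^*_c(Ran)} (the co-connectivity $\geq 1+d = 2$ is satisfied since $X'$ is a curve), and $(-)^\vee$ swaps $\Chev$ and $\coChev$ by standard Koszul duality on $\Vect$. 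Part (b) follows because every step is the application of a universal categorical construction to Frobenius-equivariant data; in particular the pairing $\matheur{A} \boxtimes \matheur{B} \to \delta_! \omega_{\Ran X}$ is Frobenius equivariant by construction in \cite{gaitsgory_atiyah-bott_2015}.

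The one genuinely non-formal step is the local identification $\matheur{B}|_{X'} \simeq \coChev(D_{X'}\mathfrak{a})|_{X'}$, which must be imported from \cite{gaitsgory_atiyah-bott_2015}. The novelty of the present argument lies not in removing this local input but in replacing the \emph{global} verification that $D_{\Ran X'}\matheur{A}|_{\Ran X'}$ is factorizable --- the main technical burden of \cite{gaitsgory_atiyah-bott_2015} --- with the clean combination of Theorems~\ref{thm:intro:Koszul_duality_connectivity_on_Ran}, \ref{thm:intro:factorizability_coChev}, and \ref{thm:intro:Chev_coChev_and_D_Ran}.
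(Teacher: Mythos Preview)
Your overall architecture matches the paper's, but there is a genuine gap in the chain of equivalences. Theorem~\ref{thm:coChev_and_C^*_c(Ran)} is stated and proved only for a \emph{proper} scheme: properness is used essentially at~\eqref{eq:commutes_C^*_c_truncated_Ran_lim_i_coChev_i}, where one needs $C^*_c(X^I,-)\simeq C^*(X^I,-)$ so that $C^*_c(\Ran^{\leq M}X,-)$ commutes with limits. You invoke it for the open curve $X'$, which is not proper. The same non-properness also breaks your step $C^*_c(X',\mathfrak{a})^\vee \simeq C^*_c(X',D_{X'}\mathfrak{a})$: Verdier duality on a non-proper scheme gives $C^*_c(X',\mathfrak{a})^\vee \simeq C^*(X',D_{X'}\mathfrak{a})$, not $C^*_c$. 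For the same reason, your final identification $C^*_c(\Ran X',\matheur{B}|_{\Ran X'})\simeq \Sym(C^*(X',M))$ is not the computation you want: one gets $\Sym$ of $C^*_c(X',-)$ of the diagonal sheaf, not of $C^*(X',M)$.

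The paper repairs all of this in one move: Proposition~\ref{prop:coChev_open_embedding} shows $(j_{\Ran})_*\coChev(D_{X'}\mathfrak{a}')\simeq \coChev(j_*D_{X'}\mathfrak{a}')$, so that Corollary~\ref{cor:B'_and_DRan_A'} yields $\matheur{B}\simeq \coChev(j_*D_{X'}\mathfrak{a}')$ as an object on $\Ran X$. One then runs the chain on the \emph{proper} curve $X$ (Proposition~\ref{prop:local_dual_implies_global_dual}): Theorem~\ref{thm:coChev_and_C^*_c(Ran)} applies to $j_*D_{X'}\mathfrak{a}'\in\coLiestar(X)^{\geq 2}$, and the passage through $C^*_c(X,j_*D_{X'}\mathfrak{a}')=C^*(X,j_*D_{X'}\mathfrak{a}')=C^*(X',D_{X'}\mathfrak{a}')=C^*_c(X',\mathfrak{a}')^\vee$ is now legitimate. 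Your argument becomes correct once you insert this $j_*$ step; everything else you wrote lines up with the paper.
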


\subsection{$BG$ and the sheaf $\matheur{B}$}
\label{subsec:BG_and_sheaf_B}
\subsubsection{} The sheaf $\matheur{B}$ that we will now describe encodes the reduced cohomology of $BG$, the relative (over $X$) classifying stack of $G$. For each $I \in \Ran_X(S)$, let $D_I \subset S\times X$ be the corresponding Cartier divisor. Let $BG_I$ denote the Artin stack classifying $G$-bundles over $D_I$ and $f_I: BG_I \to S$ the forgetful map. Then, we define
\[
	\wtilde{\matheur{B}}_{S, I} = D_S(\Fib(f_{I!}f_I^! \Lambda_S \to \Lambda_S)),
\]
where $D_S$ is the functor of taking Verdier duality on $S$. These sheaves, assembled together, give rise to a sheaf (see also~\cite[Prop. 5.4.3]{gaitsgory_weils_2014})
\[
	\wtilde{\matheur{B}} \in \Shv(\Ran X).
\]

\subsubsection{} Note that for any finite set of points $\{x_1, \dots, x_n\} \in (\Ran X)(k)$, the $!$-fiber of $\wtilde{\matheur{B}}$ at this point is
\[
	\coFib\left(\Lambda \to \bigotimes_{i=1}^n C^*(BG_{x_i})\right). \teq\label{eq:B'_!_fiber}
\]

\subsubsection{} Using a variant of the diagonal map
\[
	BG \to BG \times BG,
\]
we can equip $\wtilde{\matheur{B}}$ with the structure of an object in
\[
	\ComAlgstar(\Ran X).
\]
However, we see easily from~\eqref{eq:B'_!_fiber} that $\wtilde{\matheur{B}}$ is not factorizable. The functor $\TakeOut$ developed in~\cite{gaitsgory_atiyah-bott_2015} allows us to remove all the extra components in it and construct out of it a new object $\matheur{B} \in \Factstar(X)$ with the correct $!$-fibers at a point $\{x_1, \dots, x_n\} \in (\Ran X)(k)$
\[
	\bigotimes_{i=1}^n C^*_{\red}(BG_{x_i}).
\]
Moreover, $\matheur{B}$ has the same cohomology along $\Ran X$ as the original sheaf $\wtilde{\matheur{B}}$ (see also~\cite[Cor. 5.3.5]{gaitsgory_atiyah-bott_2015})
\[
	C^*_c(\Ran X, \matheur{B}) \simeq C^*_c(\Ran X, \wtilde{\matheur{B}}).
\]

\subsubsection{$\matheur{B}$ and $\Bun_G$} For every $S \in \Sch$ and $I\in (\Ran X)(S)$, we have a map of prestacks over $S$ by restricting the bundle to the divisor $D_I$
\[
	S \times \Bun_G \to BG_I. \teq\label{eq:ev_Bun_G_BGI}
\]
This induces a map
\[
	\wtilde{\matheur{B}}_{S, I} \to \omega_S \otimes C^*_\red(\Bun_G)
\]
and hence, also a map
\[
	\wtilde{\matheur{B}} \to \omega_{\Ran X} \otimes C^*_{\red}(\Bun_G).
\]
Applying the functor $C^*_c(\Ran X, -)$ and using the fact that $\Ran X$ is homologically contractible, we get a map
\[
	C^*_c(\Ran X, \matheur{B}) \simeq C^*_c(\Ran X, \wtilde{\matheur{B}}) \to C^*_{\red}(\Bun_G). \teq \label{eq:atiyah-bott-canonical-map}
\]

\subsubsection{} Using~\eqref{eq:C^*(BG_0_is_free)} and the assumption we have on $G$, i.e. it has homologically contractible fibers outside of $X'$, one gets an equivalence 
\[
	\matheur{B} \simeq (j_{\Ran X})_* \matheur{B}' \simeq \Sym^{>0}(j_* M) \teq \label{eq:B_is_pushforward_of_B'}
\]
where $\matheur{B}'$ is the restriction of $\matheur{B}$ to $\Ran X'$ and, the symmetric algebra is taken inside $\Shv(\Ran X)$ using the $\otimesstar$-monoidal structure.

\subsubsection{} Using the equivalence~\eqref{eq:B_is_pushforward_of_B'} and the fact that $C^*_c(\Ran X, -)$ commutes with $\Sym$,\footnote{Note that this is a special case of the fact that $C^*_c(\Ran X, -)$ commutes with $\Chev$. And in fact, both are due to the same reasons: that $C^*_c(\Ran X, -)$ is continuous and that it's symmetric monoidal.} we get an explicit presentation of the LHS of~\eqref{eq:atiyah-bott-canonical-map}
\[
	C^*_c(\Ran X, \matheur{B}) \simeq \Sym^{>0} C^*_c(X, j_* M) \simeq \Sym^{>0} C^*(X', M), \teq\label{eq:explicit_C^*_c(Ran X, B)}
\]
which appears in the statement of the Atiyah-Bott formula as stated in Theorem~\ref{thm:Atiyah-Bott}.

\subsubsection{} Now, we are done if we could show that the map in~\eqref{eq:atiyah-bott-canonical-map} is an equivalence. 

\subsection{Affine Grassmannian and the sheaf $\matheur{A}$} \label{subsec:affine_grassmannian_and_sheaf_A}

Unfortunately, one does not know how to directly prove that~\eqref{eq:atiyah-bott-canonical-map} is an equivalence. Instead, \cite{gaitsgory_weils_2014} proceeds with an equivalence of a dual nature, which we will now briefly recall.

\subsubsection{} The main player in this step is the affine Grassmannian, or more precisely, a factorizable version thereof. Let $G$ and $X$ be as above. The factorizable affine Grassmannian of $G$, denoted by $\Gr_{\Ran X'}$, is the prestack whose $S$-points are given by
\[
\Gr_{\Ran X'}(S) = \{(\matheur{P}, I, \alpha)\},
\]
where
\begin{enumerate}[(i)]
	\item $\matheur{P}$ is a $G$-bundle over $S\times X$,
	\item $I$ is a non-empty finite subset of $X'(S)$,
	\item $\alpha$ is a trivialization of $\matheur{P}$ on the complement of the graph of $I$.
\end{enumerate}

\subsubsection{} From the definition, we have the following natural morphism
\[
g: \Gr_{\Ran X'} \to \Ran X',
\]
where we remember only the set $I$, and similarly another natural morphism
\[
u: \Gr_{\Ran X'} \to \Bun_G,
\]
where we remember only the bundle $\matheur{P}$.

\subsubsection{} The map $g$ allows us to define
\[
\wtilde{\matheur{A}}' \simeq \Fib(g_!(\omega_{\Gr_{\Ran X'}})\to \omega_{\Ran X'}) \in \Shv(\Ran X'),
\]
and the map $u$ induces a map at the homology level, namely
\[
C_*^\red(\Gr_{\Ran X'}) \to C_*^\red(\Bun_G). \teq \label{eq:non-abelian-poincare-uniformizing}
\]

Together, we get the following map
\[
C^*_c(\Ran X', \wtilde{\matheur{A}}') \to C_*^\red(\Bun_G). \teq \label{eq:non-abelian-poincare-sheaf-A'}
\]

\subsubsection{} Note that since
\[
\Gr_{\Ran X'} \to \Ran X'
\]
is pseudo-proper, $\wtilde{\matheur{A}}'$ is easy to describe. Namely for any finite set of points $\{x_1, x_2, \dots, x_n\} \subset X(k)$, the $!$-fiber of $\wtilde{\matheur{A}}'$ at this point is
\[
\Fib\left(\bigotimes_{i=1}^n C_*(\Gr_{G_{x_i}}) \to \Lambda\right). \teq\label{eq:!-fiber_A'}
\]

\subsubsection{} Using a variant of the diagonal map
\[
\Gr \to \Gr \times \Gr,
\]
one can equip $\wtilde{\matheur{A}}'$ with the structure of an object in
\[
\ComCoAlgstar(\Ran X').
\]

However, note that the sheaf $\wtilde{\matheur{A}}'$ is not factorizable, since its $!$-fiber, as described in~\eqref{eq:!-fiber_A'}, is too big, i.e. it's not equivalent to
\[
\bigotimes_{i=1}^n C_*^\red(\Gr_{G_{x_i}}). \teq \label{eq:correct-!-fiber-A}
\]
Using a similar reasoning as in the case of $\wtilde{\matheur{B}}$ and $\matheur{B}$, we can construct an object $\matheur{A}' \in \coFactstar(X')$ with the correct $!$-fiber as given in~\eqref{eq:correct-!-fiber-A}, and moreover, $\matheur{A}'$ has the property that
\[
C^*_c(\Ran X', \wtilde{\matheur{A}}') \simeq C^*_c(\Ran X', \matheur{A}'). \teq\label{eq:take_out_unit_A}
\]

\subsubsection{$\matheur{A}$ and $\Bun_G$} The equivalence of a dual nature that we alluded to earlier is given by the following important result (see~\cite[Thm. 3.2.13]{gaitsgory_weils_2014}).

\begin{thm}
	The map~\eqref{eq:non-abelian-poincare-uniformizing}, and hence~\eqref{eq:non-abelian-poincare-sheaf-A'}, is an equivalence.
\end{thm}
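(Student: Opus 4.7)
The plan is to prove this as an instance of non-abelian \Poincare{} duality, whose essential content is the Drinfeld--Gaitsgory contractibility theorem: for a semi-simple simply connected group, the space of rational trivializations of a principal bundle on a curve is homologically contractible. Concretely, I would reduce the statement to showing that the natural map
\[
	u_!\, \omega_{\Gr_{\Ran X'}} \to \omega_{\Bun_G}
\]
becomes an equivalence after passing to reduced versions (i.e.\ fibers of the augmentations to $\omega_{\Bun_G}$ and $\Lambda$). Since $\Bun_G$ is smooth, it suffices to verify this $!$-stalkwise at each geometric point $\matheur{P} : \Spec k \to \Bun_G$.

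At such a geometric point, the fiber $u^{-1}(\matheur{P})$ parametrizes pairs $(I, \alpha)$ with $I \in \Ran X'$ and $\alpha$ a trivialization of $\matheur{P}|_{X - I}$. Equivalently, this fiber is a colimit over $I \in \Ran X'$ of spaces of sections of the $\matheur{P}$-twisted form of $G$ on $X - I$. The first main step is therefore to invoke the contractibility theorem for $G$ restricted to $X'$, which asserts that this colimit is homologically a point. This is the core geometric input and where the assumption that $G|_{X'}$ is semi-simple simply connected is essential.

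The second step is to bridge the gap between $\Ran X'$ and the a priori more natural $\Ran X$ appearing in the contractibility statement. Here I would use the hypothesis that the fibers of $G$ over points in $X - X'$ are homologically trivial: this forces the extra terms indexed by subsets $I$ with nonempty intersection with $X - X'$ to contribute nothing new homologically, so restricting the Ran parameter from $\Ran X$ to $\Ran X'$ does not change the answer. More precisely, one can use Proposition~\ref{prop:coChev_open_embedding}-style formalism (that $(j_{\Ran})_*$ is well-behaved for the open embedding $X' \hookrightarrow X$) to compare the two colimits.

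The main obstacle will be making the fiberwise contractibility argument work in families over $\Bun_G$, since neither $u$ nor $g$ is proper, and $\Gr_{\Ran X'}$ is an infinite-dimensional ind-scheme over $\Ran X'$. The standard way around this, as carried out in~\cite{gaitsgory_weils_2014}, is to stratify $\Gr_{\Ran X'}$ by closed subprestacks of bounded singularity type, each of which maps \emph{pseudo-properly} to $\Ran X'$ (so that $g_!$ satisfies base change), and to run the base-change/contractibility argument levelwise before passing to the colimit in the bound. This is where all the technical weight of the proof lies; the results proved in this paper are not used for this step, only for the subsequent second step regarding Verdier duality on $\Ran X$.
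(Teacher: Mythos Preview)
The paper does not prove this theorem at all: it is quoted verbatim as~\cite[Thm.~3.2.13]{gaitsgory_weils_2014} and used as a black box. Indeed, the paper explicitly says that \S\ref{subsec:statement}--\S\ref{subsec:pairing} only ``orient the readers with the existing results proved in~\cite{gaitsgory_weils_2014} and~\cite{gaitsgory_atiyah-bott_2015},'' and that the new content of this paper concerns the \emph{second} step (Verdier duality on $\Ran X$), not this first step. So there is nothing in the paper to compare your proposal against; your sketch is an outline of the argument from~\cite{gaitsgory_weils_2014}, not a reconstruction of anything done here.

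That said, your outline is broadly accurate as a summary of the strategy in~\cite{gaitsgory_weils_2014}: the essential input is the homological contractibility of the space of rational trivializations, applied $!$-fiberwise over $\Bun_G$, together with the ind-scheme/pseudo-proper stratification to make base change work. Two small corrections are in order. First, your invocation of Proposition~\ref{prop:coChev_open_embedding} is misplaced: that result concerns $\coChev$ and $(j_{\Ran})_*$ and is used later for the sheaf $\matheur{B}$, not for the $\matheur{A}'$ side or for non-abelian \Poincare{} duality. Second, the ``bridge between $\Ran X'$ and $\Ran X$'' is not really a separate step in the proof of this theorem; the result in~\cite{gaitsgory_weils_2014} is formulated directly in the generality needed (group scheme over $X$ with the stated hypotheses, Grassmannian over $\Ran X'$), and the homological triviality of fibers over $X - X'$ enters elsewhere in the reduction, not as a patch inside the contractibility argument.
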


This theorem is essentially a result about the homological contractibility of the space of rational map (maps that are defined only on an open subset) from $X$ to $G$. An earlier version of this was proved in~\cite{gaitsgory_contractibility_2012}. Together with~\eqref{eq:take_out_unit_A} we have the following
\begin{prop} \label{prop:non-abelian-poincare_sheaf_A}
We have a natural equivalence
\[
	C^*_c(\Ran X', \matheur{A}') \simeq C^\red_*(\Bun_G).
\]
\end{prop}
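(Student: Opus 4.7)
The proposition is essentially a formal combination of two facts already recorded in the preceding subsection, so the plan is primarily a matter of carefully assembling the right input.

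First, I would recall from \S\ref{subsec:affine_grassmannian_and_sheaf_A} that the sheaf $\matheur{A}'$ was constructed from $\wtilde{\matheur{A}}'$ via the $\TakeOut$ functor of~\cite{gaitsgory_atiyah-bott_2015} precisely in such a way that, while $\matheur{A}'$ is factorizable (unlike $\wtilde{\matheur{A}}'$) and has the corrected reduced $!$-fibers, its compactly-supported cohomology along the full Ran prestack remains unchanged. That is, we have a canonical equivalence
\[
	C^*_c(\Ran X', \matheur{A}') \simeq C^*_c(\Ran X', \wtilde{\matheur{A}}'),
\]
which is the first ingredient.

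Second, I would invoke the non-abelian Poincaré duality theorem stated just above the proposition: the map~\eqref{eq:non-abelian-poincare-uniformizing} induced by $u: \Gr_{\Ran X'} \to \Bun_G$ is an equivalence, and therefore so is the map~\eqref{eq:non-abelian-poincare-sheaf-A'}
\[
	C^*_c(\Ran X', \wtilde{\matheur{A}}') \to C_*^{\red}(\Bun_G).
\]
To spell out how these two maps are related, recall that $g: \Gr_{\Ran X'} \to \Ran X'$ is pseudo-proper, so $g_! \omega_{\Gr_{\Ran X'}}$ computes $C_*(\Gr_{\Ran X'})$ after pushing to a point, and hence $C^*_c(\Ran X', \wtilde{\matheur{A}}') \simeq C_*^{\red}(\Gr_{\Ran X'})$ by the definition of $\wtilde{\matheur{A}}'$ as the fiber of $g_!\omega_{\Gr_{\Ran X'}} \to \omega_{\Ran X'}$; this is the identification under which~\eqref{eq:non-abelian-poincare-sheaf-A'} becomes~\eqref{eq:non-abelian-poincare-uniformizing}.

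Composing these two equivalences produces the desired
\[
	C^*_c(\Ran X', \matheur{A}') \simeq C^*_c(\Ran X', \wtilde{\matheur{A}}') \simeq C_*^{\red}(\Bun_G).
\]
There is no genuine obstacle here: the entire non-trivial content, namely the homological contractibility of the fibers of $u: \Gr_{\Ran X'} \to \Bun_G$ that underlies~\eqref{eq:non-abelian-poincare-uniformizing}, is imported as a black box from~\cite{gaitsgory_weils_2014}, and the replacement of $\wtilde{\matheur{A}}'$ by the factorizable $\matheur{A}'$ was built by hand to preserve $C^*_c(\Ran X', -)$. Thus the proposition is essentially a bookkeeping statement whose proof amounts to chaining the two equivalences above.
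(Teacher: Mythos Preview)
Your proposal is correct and matches the paper's approach exactly: the paper gives no separate proof of this proposition, introducing it with ``Altogether, we have the following'' as an immediate combination of the non-abelian \Poincare{} duality theorem (the equivalence~\eqref{eq:non-abelian-poincare-sheaf-A'}) and the $\TakeOut$ property $C^*_c(\Ran X', \wtilde{\matheur{A}}') \simeq C^*_c(\Ran X', \matheur{A}')$ recorded just before. Your write-up simply makes explicit the chaining that the paper leaves to the reader.
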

\subsection{Pairing}
\label{subsec:pairing}
We will now describe how the equivalence given by Proposition~\ref{prop:non-abelian-poincare_sheaf_A} helps us show that \eqref{eq:atiyah-bott-canonical-map} is an equivalence.

\subsubsection{} For any schemes $S, S'\in \Sch$ and any non-empty finite subsets $I \subset X(S)$ and $I'\subset X'(S')$, we have a natural map (which is just a more elaborate variant of~\eqref{eq:ev_Bun_G_BGI})
\[
	\Gr_{I'} \times S \to \Bun_G \times S' \times S \to S'\times BG_{I},
\]
which induces a map
\[
	\matheur{A}' \boxtimes \matheur{B} \to \omega_{\Ran X' \times \Ran X},
\]
and hence, a pairing (using $\TakeOut$)
\[
	\matheur{A}' \boxtimes \matheur{B} \to \Gamma_{j_{\Ran}!}\omega_{\Ran X'}.
\]

\subsubsection{} Restricting this map to $\Ran X' \times \Ran X'$ gives us the following map
\[
	\matheur{A}' \boxtimes \matheur{B}' \to (\delta_{\Ran X'})_! \omega_{\Ran X'},
\]
and hence, using the definition of Verdier duality, a map
\[
	\matheur{B}' \to D_{\Ran X'}\matheur{A}' \teq\label{eq:B'_to_DA'}
\]
between objects in $\ComAlgstar(\Ran X')$.

\subsubsection{} It is proved, in fact twice (using different methods), in \S17 and \S18 of \cite{gaitsgory_atiyah-bott_2015}, that the restriction of~\eqref{eq:B'_to_DA'} to the diagonal $X'$ of $\Ran X'$ is an equivalence. Namely, we have
\[
	\matheur{B}'|_{X'} \simeq (D_{\Ran X'} \matheur{A}')|_{X'} \teq\label{eq:B'_eq_DA'_on_X'}.
\]

\subsection{The last steps}
\label{subsec:conclusion}
The results that we have just proved in this paper appear in two places in the concluding steps, which are given by Proposition~\ref{prop:DA'_factorizable} and~\ref{prop:local_dual_implies_global_dual}. Together, they imply the Atiyah-Bott formula.

\begin{prop} \label{prop:DA'_factorizable}
$D_{\Ran X'} \matheur{A}'$ is factorizable, i.e.
\[
	D_{\Ran X'} \matheur{A}' \in \Factstar(X') \subset \ComAlgstar(\Ran X').
\]
\end{prop}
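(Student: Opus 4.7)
The plan is to chain together the three main theorems of the paper. First, I would verify that $\matheur{A}'$ lies in $\coFactstar(X')^{\leq c_{cA}}$, i.e.\ satisfies the connectivity assumption needed to apply Theorem~\ref{thm:Koszul_duality_connectivity_on_Ran}. Since $\matheur{A}' \in \coFactstar(X')$ is supported on the diagonal $X' \hookrightarrow \Ran X'$, the restriction $\matheur{A}'|_{\oversetsupscript{X'}{\circ}{I}}$ vanishes for $|I| \geq 2$, so the only condition to check is at $|I| = 1$. That is, since $X'$ is a curve ($d = 1$), we must verify $\matheur{A}'|_{X'} \in \Shv(X')^{\leq -3}$ in the perverse $t$-structure. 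From the description of $\matheur{A}'$ recalled in \S\ref{subsec:affine_grassmannian_and_sheaf_A}, its $!$-fiber at any geometric point $x$ is $C_*^\red(\Gr_{G_x})$. Since $G$ is semi-simple simply connected, $\Gr_G$ is $2$-connected, so $C_*^\red(\Gr_{G_x}) \in \Vect^{\leq -2}$; translating through the relation $i_x^! \simeq i_x^*[-2]$ and the shift by $[1]$ between the standard and perverse $t$-structures on the curve gives exactly $\matheur{A}'|_{X'} \in \Shv(X')^{\leq -3}$.

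Granting this, Theorem~\ref{thm:Koszul_duality_connectivity_on_Ran} (bottom row) produces a Lie algebra $\mathfrak{a}' \in \Liestar(X')^{\leq c_L}$ with
\[
    \matheur{A}' \simeq \Chev(\mathfrak{a}').
\]
In particular, $\mathfrak{a}'|_{X'} \in \Shv(X')^{\leq -2}$, which is in particular a subcategory of $\Shv(X')^{\leq -1}$, so the hypothesis of Theorem~\ref{thm:Chev_coChev_and_D_Ran} is satisfied. Applying that theorem yields
\[
    D_{\Ran X'}\matheur{A}' \simeq D_{\Ran X'}\Chev(\mathfrak{a}') \simeq \coChev(D_{X'}\mathfrak{a}')
\]
as objects of $\ComAlgstar(\Ran X')$.

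It remains to show that the right-hand side is factorizable, for which I would invoke Theorem~\ref{thm:factorizability_coChev}. This requires $D_{X'}\mathfrak{a}' \in \coLiestar(X')^{\geq 1}$. But Verdier duality on the curve $X'$ exchanges $\Shv(X')^{\leq n}$ and $\Shv(X')^{\geq -n}$ (with respect to the perverse $t$-structure), so $\mathfrak{a}'|_{X'} \in \Shv(X')^{\leq -2}$ gives $(D_{X'}\mathfrak{a}')|_{X'} \in \Shv(X')^{\geq 2} \subset \Shv(X')^{\geq 1}$. Hence Theorem~\ref{thm:factorizability_coChev} applies and $\coChev(D_{X'}\mathfrak{a}') \in \Factstar(X')$, as desired.

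The main obstacle is the first step: pinning down the precise perverse-connectivity of $\matheur{A}'|_{X'}$. Everything else is a formal chain of the equivalences provided by Theorems~\ref{thm:Koszul_duality_connectivity_on_Ran}, \ref{thm:Chev_coChev_and_D_Ran}, and~\ref{thm:factorizability_coChev}, but the numerology---ensuring $\leq c_{cA}$ for $\matheur{A}'$ and $\geq 1$ for $D_{X'}\mathfrak{a}'$---is where one must input the geometric input about the affine Grassmannian of a simply connected group.
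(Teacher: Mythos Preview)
Your overall strategy is exactly the one in the paper: verify $\matheur{A}' \in \coFactstar(X')^{\leq c_{cA}}$, apply Theorem~\ref{thm:Koszul_duality_connectivity_on_Ran} to write $\matheur{A}' \simeq \Chev(\mathfrak{a}')$ with $\mathfrak{a}' \in \Liestar(X')^{\leq c_L}$, then use Theorem~\ref{thm:Chev_coChev_and_D_Ran} and Theorem~\ref{thm:factorizability_coChev}. The chain of implications from the second paragraph onward is correct and matches the paper verbatim.

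However, your first paragraph contains a genuine error. You assert that ``$\matheur{A}' \in \coFactstar(X')$ is supported on the diagonal $X' \hookrightarrow \Ran X'$,'' and use this to dismiss the connectivity condition for $|I| \geq 2$. This is false: the notation $\coFactstar(X')$ does \emph{not} mean ``supported on $X'$''; it means ``factorizable commutative coalgebra on $\Ran X'$.'' (Contrast this with $\Liestar(X')$ and $\coLiestar(X')$, which \emph{are} defined via a support condition.) Indeed, the $!$-fiber of $\matheur{A}'$ at a collection of distinct points $\{x_1,\dots,x_n\}$ is $\bigotimes_i C_*^{\red}(\Gr_{G_{x_i}})$, which is nonzero for all $n$. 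So the connectivity bound on $\oversetsupscript{X'}{\circ}{I}$ must actually be checked for every $I$.

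The fix is short and is what the paper does: use factorizability itself. Over $\oversetsupscript{X'}{\circ}{I}$, the sheaf $\matheur{A}'$ is the (restriction of the) $|I|$-fold box product of $\matheur{A}'|_{X'}$; since each factor lies in perverse degrees $\leq -3$ by the $|I|=1$ computation you already did (using $C_*^{\red}(\Gr_{G_0}) \in \Vect^{\leq -2}$ and pseudo-properness of $\Gr_{\Ran X'} \to \Ran X'$), the box product lies in perverse degrees $\leq -3|I|$. Since $-3|I| \leq (-1-d)|I|-1 = -2|I|-1$ for all $|I|\geq 1$, this gives $\matheur{A}' \in \Shv(\Ran X')^{\leq c_{cA}}$ as required.
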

\begin{proof}
It is well-known that for a split semi-simple simply connected group $G_0$, $C_*^\red(\Gr_{G_0}, \Lambda)$ lives in cohomological degrees $\leq -2$. Using the fact that
\[
	\Gr_{\Ran X'} \to \Ran X'
\]
is pseudo-proper and that $\matheur{A}'$ is factorizable, we see that for each non-empty finite set $I$, $\matheur{A}'|_{\oversetsupscript{X'}{\circ}{I}}$ lives in (perverse) cohomological degrees $\leq -3|I|$. 

Now, by Theorem~\ref{thm:Koszul_duality_connectivity_on_Ran}, we know that there exists an object
\[
	\mathfrak{a}' \in \Liestar(X')^{\leq c_{L}}
\]
such that
\[
	\matheur{A}' \simeq \Chev(\mathfrak{a'}).
\]

Theorem~\ref{thm:Chev_coChev_and_D_Ran} then implies that
\[
	D_{\Ran X'} \Chev(\mathfrak{a}') \simeq \coChev(D_{X'} \mathfrak{a}'),
\]
which is known to  be factorizable by Theorem~\ref{thm:factorizability_coChev}
\end{proof}

\begin{cor} \label{cor:B'_and_DRan_A'}
The map given in~\eqref{eq:B'_to_DA'} is an equivalence, i.e.
\[
	\matheur{B}' \simeq D_{\Ran X'} \matheur{A}' \teq\label{eq:B'_eq_DA'},
\]
and hence
\[
	\matheur{B} \simeq (j_{\Ran})_*\coChev D_{X'}\mathfrak{a}' \simeq \coChev j_* D_{X'} \mathfrak{a}'.
\]
\end{cor}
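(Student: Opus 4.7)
The plan is to exploit the factorizability of both sides of~\eqref{eq:B'_to_DA'} in order to reduce the global equivalence on $\Ran X'$ to the diagonal equivalence~\eqref{eq:B'_eq_DA'_on_X'}, which is already in hand. By construction (see \S\ref{subsec:BG_and_sheaf_B}), $\matheur{B}' \in \Factstar(X')$, and by Proposition~\ref{prop:DA'_factorizable}, the same holds for $D_{\Ran X'}\matheur{A}'$. Hence~\eqref{eq:B'_to_DA'} is a morphism between two objects of $\Factstar(X') \subset \ComAlgstar(\Ran X')$.

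To verify that it is an equivalence, I would invoke Theorem~\ref{thm:GL_equiv_cats_comalg}: the restriction functor $\ins_{X'}^!: \Factstar(X') \to \ComAlgshriek(X')$ is an equivalence of categories, hence in particular conservative. It therefore suffices to check that the restriction of~\eqref{eq:B'_to_DA'} to the diagonal $X' \hookrightarrow \Ran X'$ is an equivalence in $\ComAlgshriek(X')$, and this is precisely the content of~\eqref{eq:B'_eq_DA'_on_X'}. This yields~\eqref{eq:B'_eq_DA'}.

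For the second chain of equivalences, I would concatenate three ingredients already in our arsenal. First, $\matheur{B} \simeq (j_{\Ran})_* \matheur{B}'$ by~\eqref{eq:B_is_pushforward_of_B'}. Next, the proof of Proposition~\ref{prop:DA'_factorizable} produced an element $\mathfrak{a}' \in \Liestar(X')^{\leq c_L}$ with $\matheur{A}' \simeq \Chev(\mathfrak{a}')$, and Theorem~\ref{thm:Chev_coChev_and_D_Ran} supplies the identification $D_{\Ran X'}\Chev(\mathfrak{a}') \simeq \coChev(D_{X'}\mathfrak{a}')$. Combining with~\eqref{eq:B'_eq_DA'} gives $\matheur{B}' \simeq \coChev(D_{X'}\mathfrak{a}')$, and Proposition~\ref{prop:coChev_open_embedding}, applied to $j: X' \hookrightarrow X$, then lets me commute $(j_{\Ran})_*$ past $\coChev$ to produce the stated identification.

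Since the heavy lifting (namely the factorizability of $D_{\Ran X'}\matheur{A}'$, assembled from Theorems~\ref{thm:Koszul_duality_connectivity_on_Ran}, \ref{thm:factorizability_coChev}, and~\ref{thm:Chev_coChev_and_D_Ran}, and the diagonal equivalence~\eqref{eq:B'_eq_DA'_on_X'}) has already been carried out, there is no substantive obstacle remaining. The only point demanding some vigilance is keeping track of the commutative-algebra (rather than merely sheaf-theoretic) structures throughout, ensuring that the pairing defining~\eqref{eq:B'_to_DA'}, the Koszul-theoretic identifications, and the pushforward along $j_{\Ran}$ are all functorially compatible with these structures; this compatibility is, however, built into the functors as they have been defined.
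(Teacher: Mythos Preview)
Your proof is correct and follows the same approach as the paper's own proof, which tersely records that the first equivalence is a direct consequence of Proposition~\ref{prop:DA'_factorizable} together with~\eqref{eq:B'_eq_DA'_on_X'}, and that the second follows from Proposition~\ref{prop:coChev_open_embedding}. Your version simply makes explicit the standard mechanism (conservativity of restriction to the diagonal on factorizable objects, here via Theorem~\ref{thm:GL_equiv_cats_comalg}) and spells out the chain $\matheur{B} \simeq (j_{\Ran})_*\matheur{B}' \simeq (j_{\Ran})_*\coChev D_{X'}\mathfrak{a}' \simeq \coChev j_* D_{X'}\mathfrak{a}'$ that the paper leaves implicit.
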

\begin{proof}
The first statement is a direct consequence of the proposition above and the equivalence~\eqref{eq:B'_eq_DA'_on_X'}, where as the second statement is the result of Proposition~\ref{prop:coChev_open_embedding}.
\end{proof}

\begin{prop} \label{prop:local_dual_implies_global_dual}
We have the following equivalence induced by Proposition~\ref{prop:DA'_factorizable}
\[
	C^*_c(\Ran X, \matheur{B}) \simeq C^*_c(\Ran X', \matheur{A}')^\vee.
\]
\end{prop}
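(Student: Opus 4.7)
The plan is to trace the chain of equivalences sketched at the end of the introduction, now with each step justified by the results proved in the body of the paper. Starting from $C^*_c(\Ran X,\matheur{B})$, I would successively rewrite the sheaf $\matheur{B}$ using Corollary~\ref{cor:B'_and_DRan_A'}, swap $C^*_c(\Ran X,-)$ past $\coChev$, move inside using the adjunction $j^*\dashv j_*$ together with the completeness of $X$, apply Verdier duality on $X'$, use the formal $\coChev/\Chev$ duality on $\Vect$, swap $\Chev$ back out past $C^*_c(\Ran X',-)$, and finally identify $\Chev(\mathfrak{a}')$ with $\matheur{A}'$.

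Concretely, Corollary~\ref{cor:B'_and_DRan_A'} gives $\matheur{B}\simeq \coChev(j_*D_{X'}\mathfrak{a}')$, where $\mathfrak{a}'\in\Liestar(X')^{\leq c_L}$ is characterised by $\matheur{A}'\simeq \Chev(\mathfrak{a}')$. To push $C^*_c(\Ran X,-)$ inside $\coChev$, I would invoke Theorem~\ref{thm:coChev_and_C^*_c(Ran)}, which is the only substantive (not purely formal) ingredient. Since $X$ is a curve ($d=1$) the required coconnectivity is $j_*D_{X'}\mathfrak{a}'\in \coLiestar(X)^{\geq 2}$. This holds because the $c_L$ constraint forces $\mathfrak{a}'|_{X'}$ into perverse degrees $\leq (-1-d)|I|=-2$ at $|I|=1$, so $D_{X'}\mathfrak{a}'\in\Shv(X')^{\geq 2}$, and $j_*$ along an open immersion is left t-exact for the perverse t-structure. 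Using that the support of $j_*D_{X'}\mathfrak{a}'$ is contained in the diagonal $X\hookrightarrow \Ran X$, this step reduces the left-hand side to
\[
\coChev\bigl(C^*_c(X,\,j_*D_{X'}\mathfrak{a}')\bigr).
\]

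The remaining steps are formal. Since $X$ is complete we may replace $C^*_c(X,-)$ by $C^*(X,-)$, and then the identity $\pi_* j_* \simeq \pi'_*$ (for $\pi\colon X\to\pt$, $\pi'\colon X'\to\pt$) rewrites the argument as $C^*(X',D_{X'}\mathfrak{a}')$; Verdier duality on the smooth curve $X'$ identifies this with $C^*_c(X',\mathfrak{a}')^\vee$. Applying the formal duality $\coChev(V^\vee)\simeq \Chev(V)^\vee$ on $\Vect$ (already invoked in the introduction), followed by the Francis-Gaitsgory commutation of $\Chev$ with $C^*_c(\Ran X',-)$ and the identification $\matheur{A}'\simeq \Chev(\mathfrak{a}')$, yields $C^*_c(\Ran X',\matheur{A}')^\vee$, completing the chain. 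The main (but rather mild) obstacle is thus the coconnectivity verification supporting the single application of Theorem~\ref{thm:coChev_and_C^*_c(Ran)}; all other steps are standard Verdier duality on smooth schemes, the open-immersion adjunction exploiting completeness of $X$, or operadic duality between $\Chev$ and $\coChev$.
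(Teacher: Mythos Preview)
Your proposal is correct and follows essentially the same chain of equivalences as the paper's proof, including the same references to Corollary~\ref{cor:B'_and_DRan_A'}, Theorem~\ref{thm:coChev_and_C^*_c(Ran)}, and the commutation of $\Chev$ with $C^*_c(\Ran X',-)$. The only minor difference is that what you call the ``formal duality $\coChev(V^\vee)\simeq\Chev(V)^\vee$ on $\Vect$'' is in the paper attributed to Theorem~\ref{thm:Chev_coChev_and_D_Ran} applied to a point; note that this step is not entirely formal, as it relies on the connectivity hypothesis $C^*_c(X',\mathfrak{a}')\in\Vect^{\leq -1}$ required by that theorem.
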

\begin{proof}
We have the following equivalences
\begin{align*}
C^*_c(\Ran X, \matheur{B})
&\simeq C^*_c(\Ran X, \coChev j_* D_{X'} \mathfrak{a}') \teq\label{eq:use_cor:B'_and_DRan_A'} \\
&\simeq \coChev C^*_c(X, j_*D_{X'} \mathfrak{a}') \teq\label{eq:use_thm:coChev_and_C^*_c(Ran)} \\
&\simeq \coChev C^*(X, D_{X'} \mathfrak{a}')\\
&\simeq \coChev (C^*_c(X, \mathfrak{a}')^{\vee}) \\
&\simeq (\Chev(C^*_c(X', \mathfrak{a}')))^\vee 
\teq \label{eq:use_thm:Chev_coChev_and_D_Ran} \\
&\simeq C^*_c(\Ran X', \Chev\mathfrak{a}')^\vee \\
&\simeq C^*_c(\Ran X', \matheur{A}')^\vee.
\end{align*}
Here, \eqref{eq:use_cor:B'_and_DRan_A'}, \eqref{eq:use_thm:coChev_and_C^*_c(Ran)} and~\eqref{eq:use_thm:Chev_coChev_and_D_Ran} are due to Corollary~\ref{cor:B'_and_DRan_A'}, Theorem~\ref{thm:coChev_and_C^*_c(Ran)} and Theorem~\ref{thm:Chev_coChev_and_D_Ran} (applied to a point) respectively.
\end{proof}

\subsubsection{} Finally, as a corollary, we have the Atiyah-Bott formula. Indeed, we have
\[
	C_*^\red(\Bun_G)^\vee \simeq C^*_c(\Ran X', \matheur{A}')^\vee \simeq C^*_c(\Ran X, \matheur{B}) \simeq \Sym^{>0} C^*(X', M)
\]
where the first, second and third equivalences are due to Proposition~\ref{prop:non-abelian-poincare_sheaf_A}, Proposition~\ref{prop:local_dual_implies_global_dual}, and~\eqref{eq:explicit_C^*_c(Ran X, B)} respectively.

\appendix
\section{The $\addFil$ trick} \label{sec:appendix:addFil_trick}
In this appendix, we will quickly recall, without proof, a useful construction taken from~\cite[\S IV.2]{gaitsgory_study_2017}, which allows us to reduce many statements about $\matheur{P}$-algebras to trivial $\matheur{P}$-algebras, where $\matheur{P}$ is an operad in $\Vect$. Throughout this subsection, all categories without any further description will be assumed to be presentable, symmetric monoidal stable infinity over a field $k$ of characteristic 0. Moreover, functors between these categories are assumed to be continuous. 

All such categories, along with continuous functors between them, form a category, which we will use
\[
	\DGCatprescont^{\SymMon},
\]
to denote, or for simplicity
\[
	\DGCat^{\SymMon}.
\]

\subsection{Notations} For a symmetric monoidal category $\matheur{C}$, we denote the category of filtered objects in $\matheur{C}$
\[
	\matheur{C}^\Fil = \Fun(\mathbb{Z}, \matheur{C}),
\]
the category of functors from $\mathbb{Z}$ to $\matheur{C}$. Here, $\mathbb{Z}$ is a ordered set, viewed as a category. Similarly, we denote the category of graded objects
\[
	\matheur{C}^\gr = \Fun(\mathbb{Z}^{\set}, \matheur{C}),
\]
where $\mathbb{Z}^\set$ is a the discrete category, whose underlying underlying objects are the integers.\footnote{In~\cite{gaitsgory_study_2017}, it's called $\mathbb{Z}^{\mathrm{Spc}}$.}

\subsection{Functors} Now, we will recall several familiar functors between $\matheur{C}$, $\matheur{C}^\Fil$, and $\matheur{C}^\gr$.

\subsubsection{} Let
\[
	V = \cdots \to V_{n-1} \to V_n \to V_{n+1} \to \cdots,
\]
be an object in $\matheur{C}^\Fil$. Then, we define
\[
	\assgr: \matheur{C}^\Fil \to \matheur{C}^\gr
\]
to be the functor of taking the associated graded object
\[
	\assgr(V)_n = \coFib(V_{n-1} \to V_n),
\]
and
\[
	\oblv_{\Fil}: \matheur{C}^\Fil \to \matheur{C}
\]
to be the left Kan extension along
\[
	\mathbb{Z} \to \pt.
\]
Namely
\[
	\oblv_\Fil(V) = \colim_{n\in \mathbb{Z}} V_n.
\]

\subsubsection{} We also use
\[
	(\gr\to\Fil): \matheur{C}^\gr \to \matheur{C}^\Fil
\]
and
\[
\bigoplus: \matheur{C}^\gr \to \matheur{C}
\]
to denote the functor obtained by taking the left Kan extension along
\[
	\mathbb{Z}^\set \to \mathbb{Z},
\]
and
\[
	\mathbb{Z}^\set \to \pt
\]
respectively.

\subsubsection{} Note that the categories $\matheur{C}^\Fil$ and $\matheur{C}^\gr$ are equipped with a natural symmetric monoidal structure coming from $\matheur{C}$, and moreover, the functors $\assgr$, $\oblv_\Fil$, $\gr\to \Fil$, and $\bigoplus$ are naturally symmetric monoidal.

\subsubsection{Adding a filtration} Let
\[
	\addFil: \matheur{C} \to \matheur{C}^\Fil
\]
be the functor defined as follows: for an object $V$ in $\matheur{C}$,
\[
	\addFil(V)_n = 
	\begin{cases}
		V, &\text{when } n \geq 1, \\
		0, &\text{otherwise.}
	\end{cases}
\]

It's easy to see that
\[
	\bigoplus \circ \assgr \circ \addFil \simeq \oblv_\Fil \circ \addFil \simeq \id_\matheur{C}.
\]

\subsection{Interactions with algebras over an operad} Let $\matheur{P}$ be an operad in $\Vect$. Then we have the following pair of functors
\[
	\addFil: \matheur{P}\Palg(\matheur{C}) \to \matheur{P}\Palg(\matheur{C}^{\Fil^{>0}}) \qquad \text{and} \qquad \oblv_\Fil: \matheur{P}\Palg(\matheur{C}^{\Fil^{>0}}) \to \matheur{P}\Palg(\matheur{C}).
\]

\subsubsection{} \label{sec:appendix:addFil_oblvFil_commutative} Let
\[
	F: \DGCat^{\SymMon} \to \Cat_\infty
\]
be a functor, where $\Cat_\infty$ is the $\infty$-category of all $\infty$-categories. Suppose we have a continuous natural transformation
\[
	\Phi: \matheur{P}\Palg(-) \to F(-),
\]
i.e. morphisms between two objects in
\[
	\Fun(\DGCat^{\SymMon}, \Cat_\infty).
\]
Then from what we've discussed above, we have the following commutative diagram
\[
\xymatrix{
	\matheur{P}\Palg(\matheur{C}) \ar[r]^>>>>>>>\Phi & F(\matheur{C}) \\
	\matheur{P}\Palg(\matheur{C}^\Fil) \ar[u]^{\oblv_\Fil} \ar[r]^>>>>>\Phi & F(\matheur{C}^\Fil) \ar[u]^{\oblv_\Fil}
}
\]
which, combined with the fact that
\[
	\oblv_\Fil \circ \addFil \simeq \id_\matheur{C},
\]
implies that the following diagram also commutes
\[
\xymatrix{
	\matheur{P}\Palg(\matheur{C}) \ar[d]_{\addFil} \ar[r]^>>>>>>>\Phi & F(\matheur{C}) \\
	\matheur{P}\Palg(\matheur{C}^\Fil) \ar[r]^>>>>>\Phi & F(\matheur{C}^\Fil) \ar[u]^{\oblv_\Fil}
}
\]

\subsubsection{} Further composing the diagram above with $\assgr$ and $\bigoplus$ gives us the following commutative diagram
\[
\xymatrix{
	\matheur{P}\Palg(\matheur{C}) \ar[r]^>>>>>>>\Phi \ar[d]_{\addFil} & F(\matheur{C}) \\
	\matheur{P}\Palg(\matheur{C}^{\Fil^{>0}}) \ar[d]_{\assgr} \ar[r]^>>>>>{\Phi^{\Fil}} & F(\matheur{C}^{\Fil^{>0}}) \ar[u] ^{\oblv_\Fil}\ar[d]_{\assgr} \\
	\matheur{P}\Palg(\matheur{C}^{\gr^{>0}}) \ar[d]_{\bigoplus} \ar[r]^>>>>>{\Phi^{\gr}} & F(\matheur{C}^{\gr^{>0}}) \ar[d]_{\bigoplus} \\
	\matheur{P}\Palg(\matheur{C}) \ar[r]^>>>>>>>\Phi & F(\matheur{C})
} \teq\label{eq:addFil_trick_main_diagram}
\]

We will refer to this as the \emph{fundamental commutative diagram of the $\addFil$ trick}.

\subsubsection{} Now, suppose there are two natural transformations
\[
	\Phi_1, \Phi_2: \matheur{P}\Palg(-) \to F(-)
\]
equipped with a morphism between them
\[
	\alpha: \Phi_1 \to \Phi_2.
\]
Or more concretely, we have a compatible family of morphisms in $F(\matheur{C})$
\[
	\Phi_1(c) \to \Phi_2(c)
\]
parametrized by pairs $(\matheur{C}, c)$ where $c\in \matheur{C}$ and $\matheur{C} \in \DGCat^{\SymMon}$, and we want to prove that $\alpha$ is an equivalence.

\subsubsection{} The top square of the commutative diagram above implies that it suffices to show that 
\[
	\Phi_1 ^\Fil \circ \addFil \to \Phi_2^\Fil \circ \addFil
\]
is an equivalence. But since $\assgr$ and $\bigoplus$ are conservative, it suffices to show that
\[
	\bigoplus \circ \assgr \circ \Phi_1^{\Fil} \circ \addFil \to \bigoplus \circ \assgr \circ \Phi_2^{\Fil}\circ \addFil
\]
is an equivalence, which, due to the commutativity of the diagrams, is equivalent to
\[
	\Phi_1 \circ \bigoplus \circ \assgr \circ \addFil \to \Phi_2 \circ \bigoplus \circ \assgr \circ \addFil
\]
being an equivalence.

\subsubsection{} The crucial observation of~\cite[Prop. IV.2.1.4.6]{gaitsgory_study_2017} is the following
\begin{prop}
The functor
\[
	\bigoplus \circ \assgr\circ\addFil: \matheur{P}\Palg(\matheur{C}) \to F(\matheur{C})
\]
is canonically equivalent to $\triv_{\matheur{P}} \circ \oblv_{\matheur{P}}$, i.e.
\[
	\matheur{P}\Palg(\matheur{C}) \overset{\oblv_{\matheur{P}}}{\longrightarrow} \matheur{C} \overset{\triv_{\matheur{P}}}{\longrightarrow} \matheur{P}\Palg(\matheur{C}).
\]
\end{prop}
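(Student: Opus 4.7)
The plan is to show that $\assgr \circ \addFil$ canonically lands in the full subcategory of $\matheur{C}^{\gr^{>0}}$ consisting of graded objects concentrated in graded degree $1$, and to observe that on this subcategory any $\matheur{P}$-algebra structure is forced to be trivial; the functor $\bigoplus$ then identifies the result with the trivial $\matheur{P}$-algebra structure on the underlying object.

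Concretely, I would proceed in three steps. First, note that $\addFil: \matheur{C} \to \matheur{C}^{\Fil^{>0}}$ is symmetric monoidal with Day convolution, and its essential image sits in filtration degrees $\geq 1$. Consequently, for $V \in \matheur{C}$ the object $\addFil(V)^{\otimes n}$ sits in filtration degrees $\geq n$, so after applying $\assgr$ we obtain an object concentrated in graded degree $n$ and equal to $V^{\otimes n}$ there; this is compatible with the $\Sigma_n$-action via the symmetric-monoidal enhancements. Second, for $V \in \matheur{P}\Palg(\matheur{C})$, the $n$-ary structure map
\[
\matheur{P}(n) \otimes_{\Sigma_n} \addFil(V)^{\otimes n} \to \addFil(V)
\]
has source sitting in filtration degrees $\geq n$ and target sitting in filtration degrees $\geq 1$; applying $\assgr$ gives a map whose source is concentrated in graded degree $n$ and whose target is concentrated in graded degree $1$, hence the map is canonically null for $n \geq 2$. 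Only the $n = 1$ component, which is the identity on $V$, survives. Third, applying the symmetric monoidal functor $\bigoplus$ yields the underlying object $V$ equipped with the $\matheur{P}$-algebra structure in which all operations of arity $\geq 2$ vanish, i.e. the trivial $\matheur{P}$-algebra structure $\triv_\matheur{P}(\oblv_\matheur{P}(V))$.

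The main obstacle is packaging these observations into a canonical equivalence of $\infty$-categorical functors rather than just an object-wise identification. The cleanest approach is to argue at the level of symmetric monoidal functors: the composite $\assgr \circ \addFil: \matheur{C} \to \matheur{C}^{\gr^{>0}}$ factors through the full symmetric monoidal subcategory of graded objects supported in degree $1$, on which the induced symmetric monoidal structure is degenerate (the tensor product of any two nontrivial objects lands in degree $\geq 2$, which is excluded from that subcategory). Hence the category of $\matheur{P}$-algebras in this subcategory is canonically equivalent to $\matheur{C}$ itself via the forgetful functor, and the passage back to $\matheur{P}\Palg(\matheur{C})$ via $\bigoplus$ and $\triv_\matheur{P}$ supplies the desired natural equivalence $\bigoplus \circ \assgr \circ \addFil \simeq \triv_\matheur{P} \circ \oblv_\matheur{P}$.
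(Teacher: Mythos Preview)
The paper does not actually prove this proposition; it merely cites it from \cite[Prop.~IV.2.1.4.6]{gaitsgory_study_????}. Your overall strategy---observing that $\assgr\circ\addFil(V)$ is concentrated in graded degree $1$, so that all higher-arity operations are forced to vanish for degree reasons---is indeed the standard argument and is essentially what appears in that reference.

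There are, however, two imprecisions in your write-up that you should correct. First, $\addFil:\matheur{C}\to\matheur{C}^{\Fil^{>0}}$ is \emph{not} strongly symmetric monoidal: for instance $(\addFil V\otimes\addFil W)_1\simeq 0$ while $\addFil(V\otimes W)_1\simeq V\otimes W$. It is only \emph{lax} symmetric monoidal, which is still enough to carry $\matheur{P}$-algebras to $\matheur{P}$-algebras; your subsequent claims about where $\addFil(V)^{\otimes n}$ is supported remain valid. Second, the ``full symmetric monoidal subcategory of graded objects supported in degree $1$'' is not a monoidal subcategory, since it is not closed under the tensor product. The cleaner way to phrase the final step is: the full subcategory $\matheur{C}^{\gr=1}\subset\matheur{C}^{\gr^{>0}}$ of objects concentrated in degree $1$ has the property that for $W$ in it and $n\geq 2$, the object $W^{\otimes n}$ lives entirely in degree $n$, so $\Hom(W^{\otimes n},W)\simeq 0$; hence the forgetful functor $\matheur{P}\Palg(\matheur{C}^{\gr^{>0}})\times_{\matheur{C}^{\gr^{>0}}}\matheur{C}^{\gr=1}\to\matheur{C}^{\gr=1}$ is an equivalence with inverse $\triv_{\matheur{P}}$. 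Composing with the symmetric monoidal functor $\bigoplus$ then gives the desired identification. With these two fixes your argument goes through.
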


\subsubsection{} This implies that it suffices to prove
that
\[
	\Phi_1(c) \to \Phi_2(c)
\]
is an equivalence only for the case where $c$ is a trivial algebra.

\subsection{A general principle} More generally, suppose we want to prove a property of $\Phi(c)$ for some $c\in \matheur{P}\Palg(\matheur{C})$. Moreover, suppose this property is preserved under under $\oblv_{\Fil}$, and is conservative under $\bigoplus$ and $\assgr$. Then, it suffices to prove the case where $c$ has a trivial algebra structure.

\section{Co-filtration and $\addCoFil$}
\label{sec:appendix:cofiltration_addCoFil}
In this appendix, we will collect various notions that are dual to the one in \S\ref{sec:appendix:addFil_trick}. These are used in the body of the paper to give a proof of the $\addCoFil$ trick in a special case.

\subsection{Notations} For a symmetric monoidal category $\matheur{C}$, we denote the category of co-filtered objects
\[
	\matheur{C}^{\coFil} = \Fun(\mathbb{Z}^\op, \matheur{C}).
\]

We will also use $\matheur{C}^{\coFil^{>0}}$ to denote the full-subcategory of $\matheur{C}^{\coFil}$ consisting of objects supported in positive degrees. Similarly for graded objects $\matheur{C}^{\gr}$ and $\matheur{C}^{\gr^{>0}}$.

\subsection{Functors} As in the case of filtration, there are several familiar functors between $\matheur{C}, \matheur{C}^{\coFil}$, and $\matheur{C}^{\gr}$.

\subsubsection{} Let
\[
	V = \cdots \to V_{n+1} \to V_n \to V_{n-1} \to \cdots,
\]
be an object in $\matheur{C}^{\coFil}$. Then we define
\[
	\assgr: \matheur{C}^{\coFil} \to \matheur{C}^{\gr}
\]
to be the functor of taking the associated graded object
\[
	\assgr(V)_n = \Fib(V_n \to V_{n-1}),
\]
and
\[
	\oblv_{\coFil}: \matheur{C}^{\coFil} \to \matheur{C}
\]
to be the right Kan extension along
\[
	\mathbb{Z}^\op \to \pt.
\]
Namely
\[
	\oblv_{\coFil}(V) = \lim_{n\in \mathbb{Z}^{\op}} V_n.
\]

\subsubsection{} Note that the category $\matheur{C}^{\coFil}$ naturally inherits the monoidal structure coming from $\matheur{C}$. Moreover, the functor $\assgr$ is monoidal.

\subsubsection{} We also use
\[
	\prod: \matheur{C}^{\gr} \to \matheur{C}
\]
to denote the right Kan extension along
\[
	\mathbb{Z}^{\set} \to \pt.
\]
Namely
\[
	\prod((V_n)_{n\in \mathbb{Z}}) = \prod_{n\in \mathbb{Z}} V_n.	
\]

\subsubsection{Adding a co-filtration} We will use
\[
	\addCoFil: \matheur{C} \to \matheur{C}^{\coFil}
\]
to denote a functor defined as follows: for an object $V$ in $\matheur{C}$,
\[
	\addCoFil(V)_n =
	\begin{cases}
		V, & \text{when } n\geq 1, \\
		0, & \text{otherwise}.
	\end{cases}
\]

\subsection{Acknowledgments} The author would like to express his gratitude to D. Gaitsgory, without whose tireless guidance and encouragement in pursuing this problem, this work would not have been possible. The author is grateful to his advisor B.C. Ng\^o for many years of patient guidance and support.

This paper is revised while the author is a postdoc in Hausel group at IST Austria. We thank him and the group for providing a wonderful research environment. The author also gratefully acknowledges the support of the Lise Meitner fellowship ``Algebro-Geometric Applications of Factorization Homology'' No. M2751 of Austrian Science Fund (FWF).

\bibliography{connectivity_chiral_koszul}
\end{document}